\numberwithin{equation}{section}
\newtheorem{thm}{Theorem}[section]
\newtheorem{cor}[thm]{Corollary}
\newtheorem{lemma}[thm]{Lemma}
\newtheorem{prop}[thm]{Proposition}
\newtheorem{definition}[thm]{Definition}
\newtheorem{remark}[thm]{Remark}
\newtheorem{exam}[thm]{Example}
\newcommand{\R}{\mathbb{R}}
\begin{document}
\title{A level-set method for a mean curvature flow \\ with a prescribed boundary}
\vspace{7mm}

\author{
Xingzhi Bian
\thanks{Materials Genome Institute, Shanghai University, 99 Shangda Road, Shanghai, 200444, P.~R.~China. 
E-mail: jdvits@shu.edu.cn. 
The work of the first author was partly supported by China Scholarship Council (CSC), No.~202006890040.}, 
Yoshikazu Giga
\thanks{Graduate School of Mathematical Sciences, The University of Tokyo, Komaba 3-8-1, Meguro, Tokyo 153-8914, Japan. 
E-mail: labgiga@ms.u-tokyo.ac.jp. 
 The work of the second author was partly supported by the Japan Society for the Promotion of Science (JSPS) through the grants Kakenhi: No.~20K20342, No.~19H00639, and by Arithmer Inc., Daikin Industries, Ltd. and Ebara Corporation \ through collaborative grants.} 
and 
Hiroyoshi Mitake
\thanks{Graduate School of Mathematical Sciences, The University of Tokyo, Komaba 3-8-1, Meguro, Tokyo 153-8914, Japan. 
E-mail: mitake@g.ecc.u-tokyo.ac.jp.
The work of the last author was partly supported by JSPS through the grants Kakenhi: No.~22K03382, No.~21H04431, No.~20H01816, No.~19K03580, No.~19H00639.
}}
\date{}

\maketitle

\abstract{
We propose a level-set method for a mean curvature flow whose boundary is prescribed by interpreting the boundary as an obstacle.
 Since the corresponding obstacle problem is globally solvable, our method gives a global-in-time level-set mean curvature flow under a prescribed boundary with no restriction of the profile of an initial hypersurface.
 We show that our solution agrees with a classical mean curvature flow under the Dirichlet condition.
 We moreover prove that our solution agrees with a level-set flow under the Dirichlet condition constructed by P.~Sternberg and W.~P.~Ziemer (1994), where the initial hypersurface is contained in a strictly mean-convex domain and the prescribed boundary is on the boundary of the domain.
} \\

\noindent
Keywords: mean curvature flow, level-set method, Dirichlet problem, obstacle problem \\
MSC: 35A01; 35K55; 53C44


\section{Introduction} \label{S1} 

A level-set method is a powerful tool to track a geometric evolution of a hypersurface like mean curvature flow after it develops singularities.
 Its analytic foundation based on the theory of viscosity solutions was established by \cite{CGG} for a general geometric evolution and independently by \cite{ES} for a mean curvature flow in 1991; see also \cite{G} for later development.
 The goal of this paper is to extend this method for a mean curvature flow with a prescribed boundary.

Let $\Gamma_0$ be a smooth hypersurface embedded in $\mathbb{R}^n$ ($n\geq2$) whose geometric boundary $b\Gamma_0=\Sigma$ is a smooth codimension two compact manifold.
 We consider the mean curvature flow equation for an evolving hypersurface $\{\Gamma_t\}_{t\geq0}$ of the form
\begin{equation} \label{EMCD}
    \left\{
    \begin{alignedat}{2} 
	V &= H \quad\text{on}\quad \Gamma_t, \quad t>0, \\
	b\Gamma_t &= \Sigma, \quad t>0, \\
	\left. \Gamma_t \right|_{t=0} &= \Gamma_0,
    \end{alignedat}
    \right.
\end{equation}
where $V$ denotes the normal velocity of $\Gamma_t$ and $H$ denotes the sum of principal curvatures of $\Gamma_t$ in the direction of a unit normal vector field $\nu$ of $\Gamma_t$;
 $H$ is $n-1$ times mean curvature;
 see Figure \ref{Fr}.
 As in the case when $\Sigma$ is empty (see \cite{Gr89}), the solution $\{\Gamma_t\}$ may develop singularities for $n\geq3$; 
 Figure \ref{Fsing}. 
 See the paragraph right after Corollary \ref{Cav} for a proof. 
Note that even in the case $n=2$, $\{\Gamma_t\}$ may hit the boundary, which is crucially different from the behavior of 
mean curvature flow without obstacles since self-intersection and pinching never happen in two-dimensional setting;
 see \cite{Gr87}. 
 See Figure \ref{Fint}.
\begin{figure}[htb]
\centering
	\begin{minipage}[b]{0.45\linewidth}
\centering
\includegraphics[width=4.5cm]{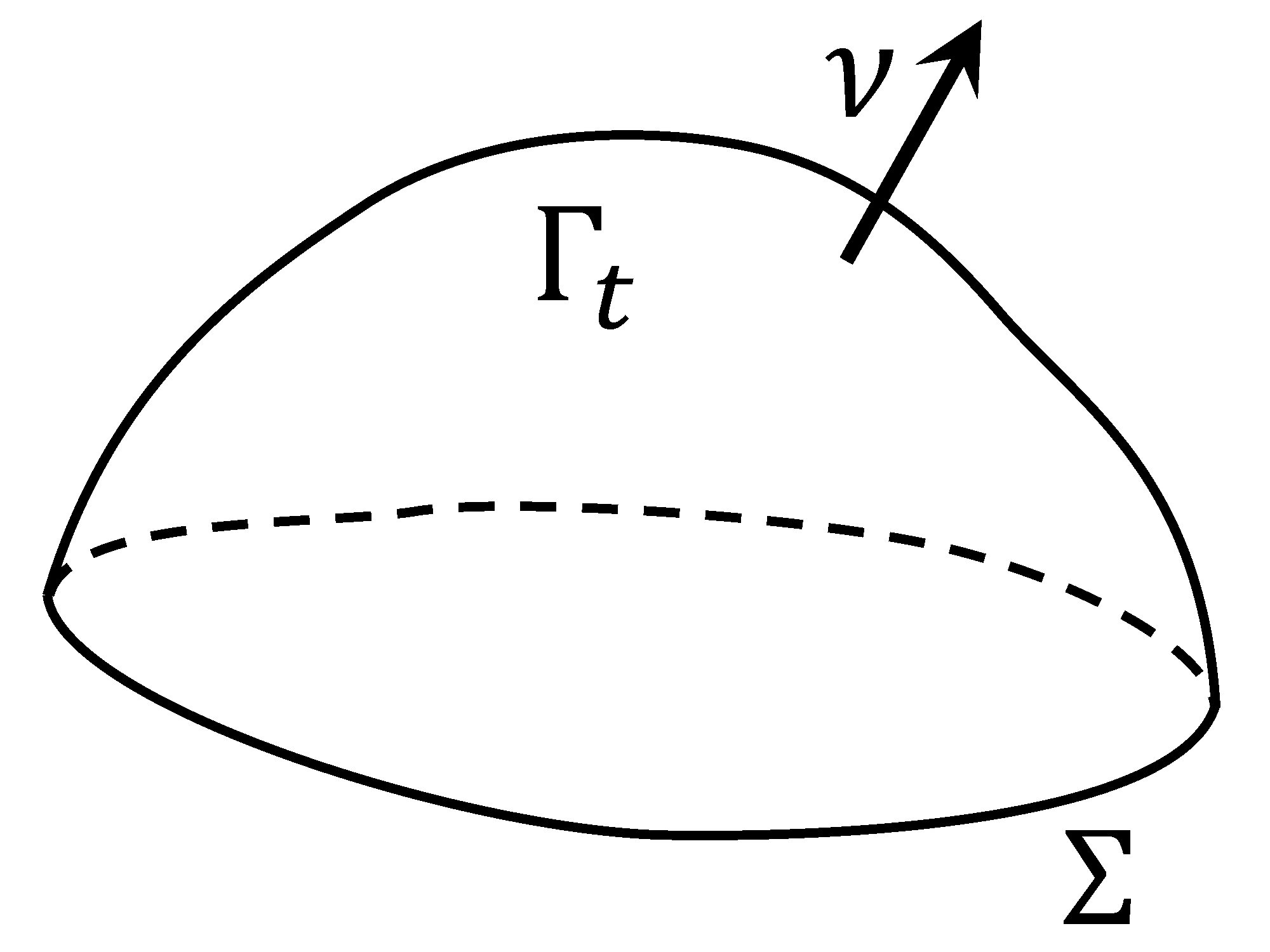}
\caption{prescribed boundary $\Sigma$} \label{Fr}
	\end{minipage}
	\begin{minipage}[b]{0.45\linewidth}
\centering
\includegraphics[width=5.5cm]{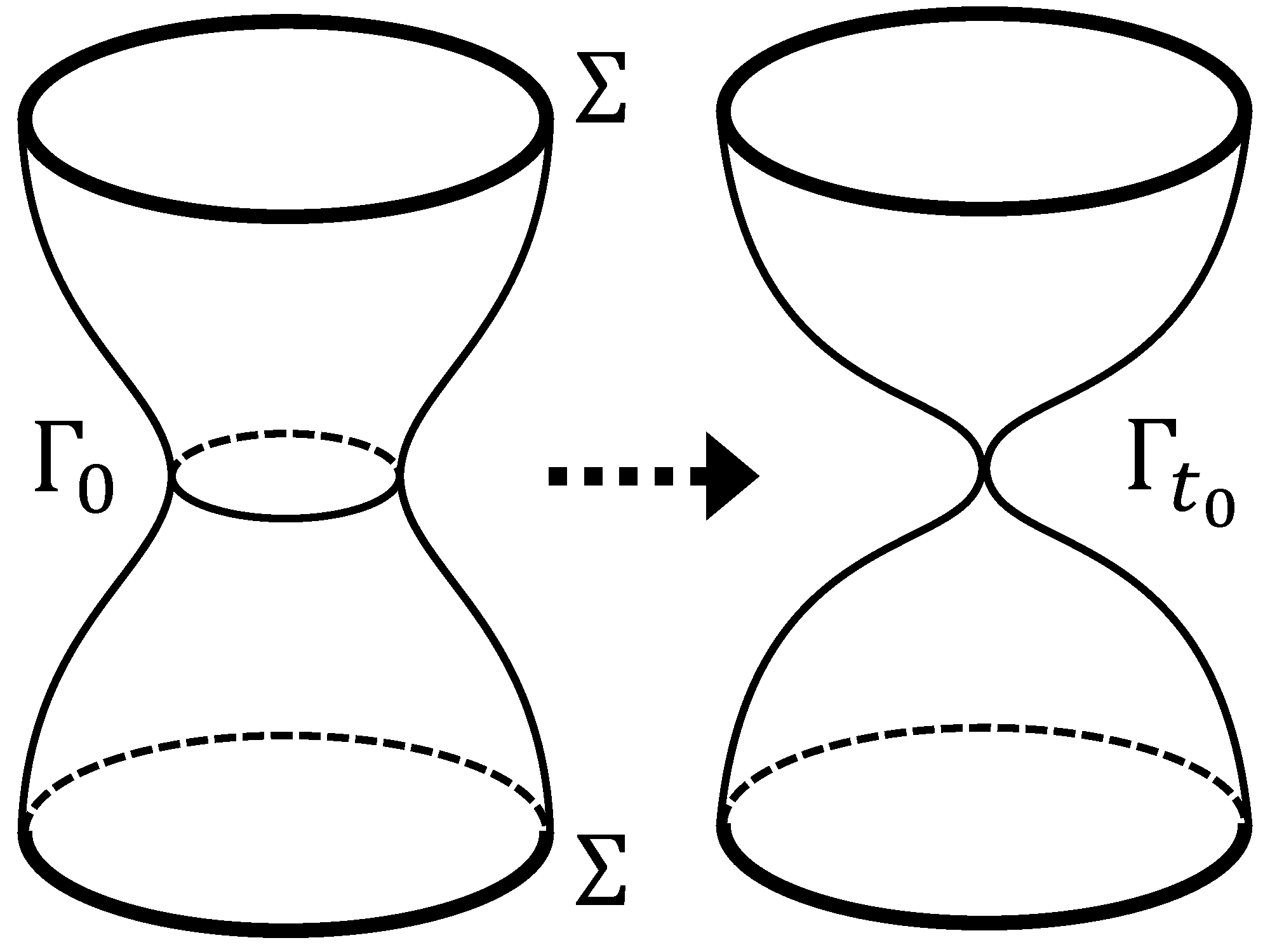}
\caption{pinching at time $t_0$} \label{Fsing}
	\end{minipage}
\end{figure}
\begin{figure}[htb]
\centering
\includegraphics[width=7.5cm]{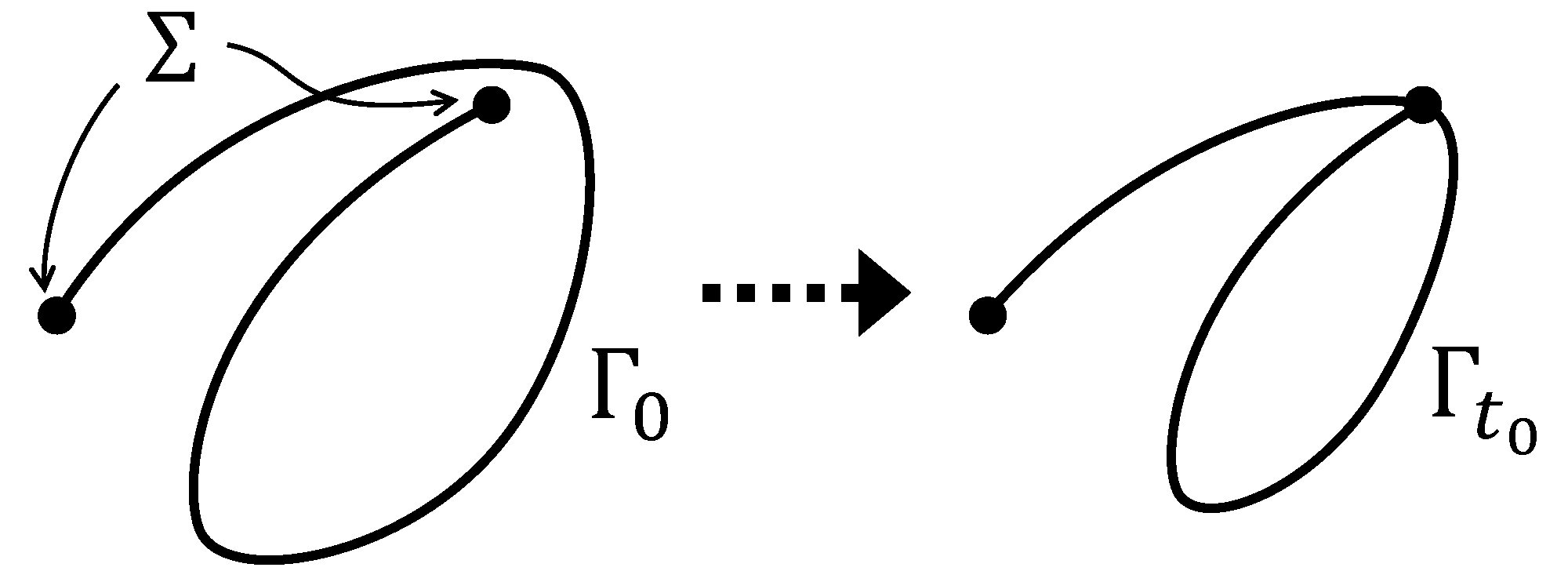}
\caption{hitting $\Sigma$ at time $t_0$} \label{Fint}
\end{figure}

Let us recall the level-set equation for the mean curvature flow equation $V=H$ when $\Sigma$ is empty.
 Let $u$ be a smooth function which is negative inside $\Gamma_t$ and positive outside $\Gamma_t$ with non-vanishing gradient on $\Gamma_t$, i.e., $\nabla u\neq0$ on $\Gamma_t$.
 If $\nu$ is taken $\nu=-\nabla u/|\nabla u|$, then
\[
V = \frac{u_t}{|\nabla u|}, \quad
H = -\operatorname{div}\nu = \operatorname{div}\left(\frac{\nabla u}{|\nabla u|}\right) 
\]
so that $V=H$ is
\begin{equation} \label{Elev}
u_t - |\nabla u| \operatorname{div}\left(\frac{\nabla u}{|\nabla u|}\right) = 0.
\end{equation}
We consider \eqref{Elev} not only on $\Gamma_t$ but also in whole $\mathbb{R}^n$.
 This means that we ask each level-set of $u$ moves by its $(n-1)$ times mean curvature.
 The equation \eqref{Elev} is called the level-set mean curvature flow equation.
 In this paper, we simply call it the level-set equation.
 The equation \eqref{Elev} is degenerate in the direction orthogonal to each level-set since each level set moves independently from other level sets.
 One needs a weak notion of a solution.
 The notion of viscosity solutions fits well.
 It turns out that there exists a unique global-in-time viscosity solution $u$ for any bounded uniformly continuous initial data $u_0$, i.e., $u_0\in BUC(\mathbb{R}^n)$ with the property that $u\in BUC\left(\mathbb{R}^n\times[0,T]\right)$ for any $T>0$;
 see e.g,\ \cite{G}.
 For given $\Gamma_0\subset\mathbb{R}^n$, we take $u_0\in BUC(\mathbb{R}^n)$ such that
\begin{equation} \label{EGin}
	\Gamma_0 = \left\{ x \in \mathbb{R}^n \bigm| u_0(x) = 0 \right\}.
\end{equation}
Such $u_0$ always exists by taking $u_0(x)=\operatorname{dist}(x,\Gamma_0)\wedge\delta$ for $\delta>0$, where $a\wedge b=\min(a,b)$.
 Our generalized solution to the mean curvature flow equation $V=H$ is given as
\begin{equation} \label{EGlev}
	\Gamma_t = \left\{ x \in \mathbb{R}^n \bigm| u(x,t) = 0 \right\}, \quad
	t \geq 0,
\end{equation}
where $u$ is the viscosity solution of \eqref{Elev} with $\left.u\right|_{t=0}=u_0$.
 Fortunately, the set $\Gamma_t$ only depends on $\Gamma_0$.
 It is even independent of the choice of the orientation $\nu$ since the mean curvature flow is orientation free;
 see e.g.\ \cite{ES} and \cite{G}.
 We may take $u_0\geq0$ so that $u\geq0$.
 We call $\{\Gamma_t\}$ as the level-set flow starting from $\Gamma_0$ or with initial data $\Gamma_0$.

To handle the prescribed boundary $\Sigma$, it turns out that it is reasonable to consider an obstacle problem for \eqref{Elev}.
 We consider \eqref{Elev} with constraint
\begin{equation} \label{Eob}
	\psi^- \leq u \leq \psi^+ \quad\text{in}\quad \mathbb{R}^n \times (0,\infty), 
\end{equation}
where $\psi^\pm$ is uniformly continuous in $\mathbb{R}^n\times[0,T]$ for any $T>0$ such that
\begin{equation} \label{Eb}
	\left\{ x \in \mathbb{R}^n \bigm| \psi^\pm(x,t) = 0 \right\} = \Sigma.
\end{equation}
The functions $\psi^+$ is called an upper obstacle and $\psi^-$ is called a lower obstacle.
 Such $\psi^\pm$ exists for example by taking $\psi^\pm=\pm\operatorname{dist}(x,\Sigma)$.
 Such an obstacle problem is studied by G.~Mercier \cite{M}.
 It is known in \cite{M} that there exists a unique global-in-time viscosity solution $u$ to \eqref{Elev} with \eqref{Eob} and the initial condition $u(\cdot ,0)=u_0$ on $\mathbb{R}^n$ satisfying 
 \[
	\psi^-(\cdot,0) \leq u_0\leq\psi^+(\cdot,0).
\]
 The resulting level-set flow only depends on $\Gamma_0$ and $\Sigma$ and independent of the choice of $\psi^\pm$ and $u_0$.
 Since \eqref{Elev} is orientation-free, we may assume $u_0\geq0$ so that $u\geq0$ and a lower obstacle $\psi^-$ is unnecessary.
 Thus, we consider \eqref{Elev} with
\[
	0 \leq u \leq \psi^+ \quad\text{in}\quad \mathbb{R}^n \times (0,\infty)
	\quad\text{and}\quad 0 \leq u_0 \leq \psi^+(\cdot,0) \quad\text{in}\quad \mathbb{R}^n
\]
under \eqref{Eb} and \eqref{EGin}.
 We call $\Gamma_t$ defined by \eqref{EGlev} through the viscosity solution of the level-set equation with obstacle $\psi^+$ the level-set flow with obstacle $\Sigma$ and initial data $\Gamma_0$.
 The level-set flow with obstacle $\Sigma$ and initial data $\Gamma_0$ can be obtained for arbitrary closed set $\Sigma$ and $\Gamma_0$ in $\mathbb{R}^n$.
 The set $\Sigma$ can be a single point or can have an interior.
 We propose that this is a generalized solution to a mean curvature equation \eqref{EMCD} with a prescribed boundary $\Sigma$ when $\Sigma$ is a codimension two manifold.

Our main goal in this paper is to prove that our level-set flow with obstacle $\Sigma$ is consistent with classical solution to \eqref{EMCD} and a level-set flow constructed by P.~Sternberg and W.~P.~Ziemer \cite{SZ} when $\Sigma$ is on the boundary $\partial U$ of strictly mean-convex bounded $C^2$ domain $U$ in $\mathbb{R}^n$, i.e., the inward mean curvature of $\partial U$ is positive.
 These simple looking problems turn to be nontrivial.

More precisely, let us consider
\begin{equation} \label{eq:CD}
    \left\{
    \begin{alignedat}{2} 
	&v_t -\operatorname{div} \left(\frac{Dv}{|Dv|}\right) |Dv| = 0 && \quad\text{in}\quad U \times(0,\infty) \\
	&v(\cdot,0) = v_0 && \quad\text{on}\quad \overline{U} \\
	&v(\cdot,t) = g && \quad\text{on}\quad \partial U\times(0,\infty),
    \end{alignedat}
    \right.
\end{equation}
where $U$ is a strictly mean-convex bounded $C^2$ domain in $\mathbb{R}^n$ satisfying $\Sigma\subset\partial U$.
 Here, $v_0\in C(\overline{U})$ and $g\in C(\partial U)$ are functions satisfying $\left.v_0\right|_{\partial U}=g$, and
\[
	\Sigma = \left\{ x \in \partial U \bigm| g(x) = 0 \right\}.
\]
In \cite{SZ}, the unique existence of global-in-time viscosity solutions $v\in C\left(\overline{U}\times[0,\infty)\right)$ to \eqref{eq:CD} is well established.
 It should be emphasized that the viscosity solution to \eqref{eq:CD} established in \cite{SZ} satisfies the boundary condition in the classical sense (not in the sense of viscosity solutions).
 In this paper, we establish
\begin{thm} \label{Tcsz}
Let $\Sigma$ and $\Gamma_0$ be compact sets in $\mathbb{R}^n$ with $\Sigma\subset\Gamma_0$.
 Assume that there is a bounded $C^2$ domain $U$ with strictly mean-convex boudary $\partial U$ containing $\Sigma$ and $\Gamma_0\backslash\Sigma\subset U$.
 Let $v$ be the viscosity solution in $U\times(0,T)$ of \eqref{eq:CD} 
 with $v_0=\operatorname{dist}(x,\Gamma_0)$ and $g=\left.v_0\right|_{\partial U}$, 
 and set 
 \[
 \Gamma_t^U:=\{x\in\mathbb{R}^n \mid v(x,t)=0\}. 
 \]
 Let $\Gamma_t$ be the level-set mean curvature flow with obstacle $\Sigma$ starting from $\Gamma_0$.
 Then $\Gamma_t^U=\Gamma_t$ for $0\leq t<T$.
 In particular $\Gamma_t^U$ is independent of $U$.
\end{thm}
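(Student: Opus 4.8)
The plan is to prove the equality $\Gamma_t^U = \Gamma_t$ by showing each inclusion via a comparison argument, using the uniqueness of the obstacle-problem solution recalled from \cite{M}. The key observation is that the Sternberg--Ziemer solution $v$ of \eqref{eq:CD}, once extended suitably outside $\overline{U}$, is a candidate sub/super-solution for the obstacle problem governing $\Gamma_t$, and conversely the obstacle-problem solution $u$ restricted to $\overline U$ is a candidate for the Dirichlet problem \eqref{eq:CD}. First I would fix the obstacle function $\psi^+$ and the initial datum $u_0$ for the level-set flow with obstacle $\Sigma$; since the flow depends only on $\Gamma_0$ and $\Sigma$, I am free to take $u_0 = \operatorname{dist}(x,\Gamma_0) \wedge \delta$ for $\delta$ small and $\psi^+ = C\operatorname{dist}(x,\Sigma)$ for a large constant $C$, chosen so that $\psi^+ \geq u_0$ everywhere and so that $\psi^+$ is compatible with $v_0$ on $\overline U$ (i.e. $v_0 \le \psi^+$ on $\overline{U}$, which holds since $v_0$ vanishes on $\Sigma$ and is Lipschitz). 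Let $u$ be the viscosity solution of \eqref{Elev} with obstacle $0 \le u \le \psi^+$ and initial datum $u_0$, so that $\Gamma_t = \{u(\cdot,t)=0\}$.

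The heart of the argument is to identify $v$ (the Dirichlet solution on $U$) with $u$ on $\overline U$. For the inequality $u \le v$ on $\overline{U}\times[0,T)$: I would check that $v$, which satisfies \eqref{Elev} classically/in the viscosity sense in $U$, equals $g \ge 0$ on $\partial U$ in the classical sense, and has $v(\cdot,0)=v_0 = u_0$ on $\overline U$ (taking $\delta$ large enough that the truncation is inactive on $\overline U$, or simply noting the zero level sets coincide). The strict mean-convexity of $\partial U$ is what guarantees, via \cite{SZ}, that $v$ attains the boundary data classically, which is the feature that lets $v$ serve as a barrier: no level set of $v$ detaches from $\partial U$, so in particular $\Gamma_t^U$ stays inside $\overline U$ with boundary on $\Sigma$. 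One then extends $v$ to all of $\mathbb{R}^n$ — for instance by $\tilde v := v$ on $\overline U$ and $\tilde v := \psi^+$ outside, or better by a suitable sup-convolution-type gluing — and verifies that $\tilde v$ is a supersolution of the obstacle problem (it satisfies $0 \le \tilde v \le \psi^+$ because $v \le g|_{\partial U}$-controlled values $\le \psi^+$ inside by comparison, using strict mean-convexity to keep $v$ below the distance-type barrier). Comparison for the obstacle problem then yields $u \le \tilde v$, hence $u \le v$ on $\overline U$. For the reverse inequality $v \le u$ on $\overline U$: the restriction $u|_{\overline U}$ is a subsolution of \eqref{Elev} in $U$, its initial trace is $u_0 = v_0$, and on $\partial U$ we have $u \le \psi^+$ with $\psi^+ = 0 = g$ on $\Sigma$; away from $\Sigma$ one needs $u \le g$ on $\partial U$, which should follow by comparing $u$ with a stationary barrier built from $\partial U$ (here strict mean-convexity makes $\partial U$ itself, suitably localized, a supersolution). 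Then the comparison principle for the Dirichlet problem \eqref{eq:CD} gives $v \le u$ on $\overline U$.

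Combining the two inequalities gives $u = v$ on $\overline U \times [0,T)$, hence $\Gamma_t \cap \overline{U} = \{u(\cdot,t)=0\}\cap\overline U = \{v(\cdot,t)=0\} = \Gamma_t^U$. It remains to check that $\Gamma_t$ does not escape $\overline U$, i.e. $\{u(\cdot,t)=0\}\subset \overline U$. This follows because outside $\overline U$ we have the strict bound $u \ge$ (a positive barrier): using $\psi^+$ is not directly a lower bound, so instead I would run a comparison with the level-set flow \emph{without} obstacle started from a hypersurface enclosing $\Gamma_0$ and contained in $\overline U$, together with the fact — provable from strict mean-convexity — that such a flow stays in $\overline U$ for $t<T$; alternatively, invoke avoidance between $\Gamma_t$ and $\partial U$ moved by its own mean curvature flow (which, being strictly mean-convex, moves strictly inward initially and exists on $[0,T')$ for some $T'>0$, then iterate). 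The main obstacle I anticipate is precisely this last point and the boundary comparison $u\le g$ on $\partial U$: matching the obstacle constraint $u\le\psi^+$ (which only pins $u=0$ on $\Sigma$) with the Dirichlet condition $v=g$ (which pins $v$ everywhere on $\partial U$) requires that strict mean-convexity forbid the zero level set of $u$ from sliding along $\partial U$ away from $\Sigma$ — this is where the geometric hypothesis does the essential work, and making it rigorous at the level of viscosity sub/supersolutions (constructing the right local barriers near $\partial U \setminus \Sigma$) is the delicate step. Once that is in place, the independence of $\Gamma_t^U$ from $U$ is immediate since $\Gamma_t$ was defined without reference to $U$.
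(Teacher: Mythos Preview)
Your two-inclusion strategy is the same as the paper's, and you correctly locate the delicate point at the boundary $\partial U$. But there is a genuine missing idea in your argument for $\Gamma_t \subset \Gamma_t^U$.

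You try to obtain $v \le u$ on $\overline U$ by direct comparison, which would require $g \le u$ on $\partial U$ (note also that for this direction $u$ must play the role of \emph{super}solution, not subsolution). Even granting that strict mean-convexity forces $u > 0$ on $\partial U \setminus \Sigma$ --- which is true and is what the paper establishes --- there is no reason whatsoever for the pointwise inequality $g(x) \le u(x,t)$ to hold: both sides are merely positive there, with no quantitative relation, and no barrier near $\partial U$ can manufacture one. The paper resolves this not by a barrier but by a \emph{renormalization}: once the zero sets of $u$ and $v$ on the parabolic boundary $K = (\partial U \times [0,T]) \cup (\overline U \times \{0\})$ are known to coincide, Lemma~\ref{Lnor} yields an increasing continuous $\theta$ with $\theta(0)=0$ and $v|_K \le (\theta \circ u)|_K$. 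Since the level-set equation is geometric, $\theta \circ u$ is again a viscosity solution in $U$ (with no obstacle, because one has already shown $u \le v < \psi^+$ there), and comparison in $U$ gives $v \le \theta \circ u$, hence $\Gamma_t \subset \Gamma_t^U$. Without this renormalization the inclusion cannot be closed.

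Two further points where your sketch would not go through as written. First, for $\Gamma_t^U \subset \Gamma_t$ your extension $\tilde v := \psi^+$ outside $\overline U$ is in general discontinuous across $\partial U$ (since $g \neq \psi^+$ there), so $\tilde v$ need not be a viscosity supersolution. The paper exploits the freedom in the choice of obstacle: it takes $\psi^+ = v + \sigma$ in $\overline U$ with $\sigma > 0$ in $U$ and $\sigma = 0$ on $\partial U$, so that $v$ meets the obstacle exactly on $\partial U$ and the extension by the obstacle is automatically continuous and a supersolution. Second, to get $u > 0$ on $\partial U \setminus \Sigma$ you cannot use $\partial U$ itself as a barrier, since $\Sigma \subset \partial U$ must stay fixed; the paper constructs (Lemma~\ref{Lmod}) a strictly mean-convex $C^2$ domain $V \subset U$ pushed inward away from $\Sigma$ but agreeing with $U$ near $\Sigma$, and compares $u$ with $\operatorname{dist}(\cdot,V)$ via Lemma~\ref{Lmcst}.
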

Here, we briefly explain our strategy to prove Theorem \ref{Tcsz}.
 First let us take $v_0\in C(\overline{U})$ and $g\in C(\partial U)$ so that $v_0(x):=\operatorname{dist}(x,\Gamma_0)$ for $x\in\overline{U}$, and $g=0$ on $\Sigma$ and $g>0$ outside of $\Sigma$. 
 Let $v$ be the viscosity solution to \eqref{eq:CD}.
 Define an upper obstacle $\tilde{\psi}^+$ by
\[
	\tilde{\psi}^+(x,t) := v(x,t) + \sigma(x)
	\quad\text{for}\quad x \in \overline{U}, 
	\quad t \geq 0,
\]
where $\sigma\in C(\overline{U})$ is a give function satisfying $\sigma=0$ on $\partial U$ and $\sigma>0$ in $U$.
 We extend $\tilde{\psi}^+$ outside $\overline{U}$ so that the extended one $\psi^+$ is positive outside $\overline{U}$ and $\psi^+\in BUC\left(\mathbb{R}^n\times[0,\infty)\right)$.
 We note that $\left\{x \bigm|\psi^+(x,t)=0\right\}=\Sigma$ for all $t\geq0$.
 Let $u$ be the viscosity solution of \eqref{Elev} with obstacle $\psi^+$ and initial data $u_0$.
 Let $\Gamma_t$ be the level-set flow with obstacle $\Sigma$ starting from $\Gamma_0$.
 Since $U$ is strictly mean-convex, we observe that $\operatorname{dist}(x,\overline{U})$ is a viscosity subsolution of the obstacle problem near $\partial U$.
 By simple comparison, $u\geq\operatorname{dist}(x,\overline{U})$ so that $\Gamma_t\backslash\overline{U}=\emptyset$.
 One is able to construct a little bit smaller strictly mean-convex $C^2$ domain $V$ such that $\overline{V}$ is contained in $U$ except near $\Sigma$ and $\Sigma\subset\partial V$.
 By comparison with $\operatorname{dist}(x,\overline{V})$, we even conclude that $u>0$ on $\partial U\backslash\Sigma$.
 By comparison, we conclude that $u\leq v$ in $U$, which implies $\Gamma_t^U\subset\Gamma_t$ and that $u$ is a viscosity solution of \eqref{Elev} in $U$ with no obstacle.
 Since $u>0$ on $\partial U\backslash\Sigma$, we are able to renormalize the value of $u$ on $\partial U$ so that $v\leq\theta\circ u$ on $\partial U$, where $\theta\in C[0,\infty)$ is some increasing function such that $\theta(0)=0$.
 By the invariance $\theta\circ u$ is a viscosity solution to \eqref{Elev} in $U$ (with no obstacle).
 By comparison $v\leq\theta\circ u$ in $U$, which implies that $\Gamma_t\subset\Gamma_t^U$.
 We thus conclude that $\Gamma_t=\Gamma_t^U$.
 We give the detail of the proof in Section \ref{S3}. 

Next, we prove the consistency with classical solutions along the line of \cite{ES} (see also \cite{GG1}) with extra care near $\Sigma$.
\begin{thm} \label{Tcons} 
Let $\{\Gamma_t^s\}_{0\leq t \leq T}$ be a continuous family (in the sense of Hausdorff distance) of compact evolving hypersurfaces in $\mathbb{R}^n$ that contains a $k$ codimensional ($n-k$ dimensional) $C^2$ submanifold $\Sigma$ (independent of $t$ possibly having a $C^2$ geometric boundary) with $k\geq2$.
 Assume that $\{\Gamma_t^s\}_{0\leq t \leq T}$ is $C^{2,1}$ outside $\Sigma$ and satisfies the mean curvature flow equation $V=H$ outside $\Sigma$ with initial data $\left.\Gamma_t^s\right|_{t=0}=\Gamma_0$, which is $C^2$ outside $\Sigma$.
 Let $\{\Gamma_t\}_{t\geq0}$ be a level-set mean curvature flow with obstacle $\Sigma$.
 If $\left.\Gamma_t\right|_{t=0}=\Gamma_0$, then $\Gamma_t^s=\Gamma_t$ for $0\leq t\leq T$.
\end{thm}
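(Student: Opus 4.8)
\emph{Plan.} The plan is to adapt the consistency argument of Evans--Spruck \cite{ES} (see also \cite{GG1}): build barriers for the level-set equation out of the signed distance function of the smooth flow $\{\Gamma_t^s\}$, and squeeze the level-set solution between them by the comparison principle for the obstacle problem established in \cite{M}. Since the level-set flow with obstacle $\Sigma$ is independent of the choices of $u_0$ and of $\psi^+$, I would take $u_0(x)=\operatorname{dist}(x,\Gamma_0)\wedge\delta$ and $\psi^+(x,t)=\operatorname{dist}(x,\Sigma)$, so that the solution $u$ of the obstacle problem satisfies $0\le u\le\psi^+$. Then $\psi^+|_\Sigma=0$ forces $u|_\Sigma=0$, hence $\Sigma\subset\{x:u(x,t)=0\}$; since also $\Sigma\subset\Gamma_t^s$ by hypothesis, it suffices to prove $\{x:u(x,t)=0\}\setminus\Sigma=\Gamma_t^s\setminus\Sigma$, which then gives $\Gamma_t=\Gamma_t^s$.

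Away from $\Sigma$ this is the Evans--Spruck computation. Let $d(x,t)$ be the signed distance from $x$ to $\Gamma_t^s$, the sign being fixed by an orientation. By compactness and the $C^{2,1}$-regularity of $\{\Gamma_t^s\}$ outside $\Sigma$, for each $\eta>0$ there is $\rho(\eta)>0$ such that $d$ is $C^{2,1}$ on $\{(x,t):|d(x,t)|<\rho(\eta),\ \operatorname{dist}(x,\Sigma)>\eta,\ 0\le t\le T\}$; there $|\nabla d|\equiv1$ and $D^2 d\,\nabla d\equiv0$, so the level-set operator applied to $d$ equals $\partial_t d-\Delta d$, and the equation $V=H$ shows that $\partial_t d-\Delta d$ vanishes on $\Gamma_t^s$ and has the same sign as $d$ for $|d|$ small (indeed $\partial_t d-\Delta d=d\sum_i\kappa_i^2/(1+d\kappa_i)$ in terms of the foot-point principal curvatures $\kappa_i$). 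Thus $d$ is a classical supersolution of the level-set equation where $d>0$ and a subsolution where $d<0$; truncating $d$ and $|d|$ at a small height and matching with constants produces sub- and supersolutions whose common zero set is $\Gamma_t^s$, and since $\operatorname{dist}(\cdot,\Gamma_t^s)\le\operatorname{dist}(\cdot,\Sigma)=\psi^+$ (because $\Sigma\subset\Gamma_t^s$) these barriers stay below the obstacle and are admissible for the obstacle problem. Comparison pins $u$ to $0$ on $\Gamma_t^s$ and keeps $u>0$ off $\Gamma_t^s$ on every compact subset of $\mathbb{R}^n\setminus\Sigma$.

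The step I expect to be the main obstacle is the analysis near $\Sigma$, where the foot-point projection onto $\Gamma_t^s$ is multivalued, $d$ ceases to be defined, and the obstacle activates, so the comparison above cannot be closed up to $\Sigma$ without further input. Here I would use $k=\operatorname{codim}\Sigma\ge2$ together with the $C^2$-regularity of $\Gamma_t^s$ up to $\Sigma$, dividing a tubular neighborhood of $\Sigma$ into the sheet side, where $\Gamma_t^s$ approaches $\Sigma$ as a $C^2$ hypersurface-with-boundary and the barriers built from $d$ persist arbitrarily close to $\Sigma$, and the far side beyond $\Sigma$, where the governing function is $\operatorname{dist}(\cdot,\Sigma)$: for a $C^2$ submanifold of codimension $k$ one has $\operatorname{div}\bigl(\nabla\operatorname{dist}(\cdot,\Sigma)/|\nabla\operatorname{dist}(\cdot,\Sigma)|\bigr)=(k-1)/\operatorname{dist}(\cdot,\Sigma)+O(1)$ near $\Sigma$, so $\operatorname{dist}(\cdot,\Sigma)$ is a strict subsolution of the level-set equation there. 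This supplies a lower barrier on the far side (forcing $u>0$ off $\Sigma$) and, via the shrinking tubes $\{\operatorname{dist}(\cdot,\Sigma)=r(t)\}$ with $r(t)^2\approx r_0^2-2(k-1)t$, confines the part of $\Gamma_t$ lying near $\Sigma$ to a tube collapsing onto $\Sigma$. One then reconciles the two sides and the obstacle along the hypersurfaces $\{\operatorname{dist}(\cdot,\Sigma)=\eta\}$, letting $\eta\downarrow0$, using the $C^2$-regularity of $\Gamma_t^s$ up to $\Sigma$ and, where convenient, a strictly mean-convex $C^2$ domain fitted to $\Sigma$ near $\Sigma$ as in the proof of Theorem \ref{Tcsz}. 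Combining the two regimes gives $\{x:u(x,t)=0\}\setminus\Sigma=\Gamma_t^s\setminus\Sigma$, hence $\Gamma_t^s=\Gamma_t$ for $0\le t\le T$. Besides this near-$\Sigma$ matching, the remaining care is routine: the viscosity bookkeeping at the non-smooth loci of the barriers (the ridge $\{d=0\}$ and the cut locus emanating from $\Sigma$) and checking that the comparison principle of \cite{M} applies to the glued barriers.
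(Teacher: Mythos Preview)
Your high-level strategy---build distance-function barriers \`a la Evans--Spruck and squeeze $u$ between them via the comparison principle for the obstacle problem---is exactly the paper's. The difference is in the near-$\Sigma$ analysis, where you anticipate a genuine difficulty (sheet side / far side, shrinking tubes, a matching procedure on $\{\operatorname{dist}(\cdot,\Sigma)=\eta\}$ with $\eta\downarrow0$). The paper avoids all of this with two clean observations that you are missing.

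First, work with the \emph{unsigned} distance $|d^s|(x,t)=\operatorname{dist}(x,\Gamma_t^s)$ throughout (the equation is orientation-free), and take $\psi^+(x)=\operatorname{dist}(x,\Sigma)$. Since $\Sigma\subset\Gamma_t^s$, in the region
\[
E^\delta(t)=\{x:\operatorname{dist}(x,\Sigma)\le\operatorname{dist}(x,\Gamma_t^s),\ \operatorname{dist}(x,\Sigma)\le\delta\}
\]
one has $\operatorname{dist}(x,\Gamma_t^s)=\operatorname{dist}(x,\Sigma)$, so the candidate supersolution $w=\operatorname{dist}(x,\Gamma_t^s)\wedge\delta$ \emph{equals the obstacle} $\psi^+$ there. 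By the very definition of supersolution for the obstacle problem, there is nothing to check at such points. Off $E^\delta$ the Evans--Spruck computation gives $d_t^s-\Delta d^s\ge0$, and truncation by $\delta$ handles the far field. No $\eta\downarrow0$ limit, no matching.

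Second, for the subsolution take $z=e^{-\lambda t}w$. Off $E^\delta$ your own identity $d_t^s-\Delta d^s=d^s\sum_i\kappa_i^2/(1-d^s\kappa_i)$ shows that multiplying by $e^{-\lambda t}$ with $\lambda$ large flips the sign. On $E^\delta$, $z=e^{-\lambda t}\operatorname{dist}(x,\Sigma)$; here you correctly noted $\operatorname{div}\bigl(\nabla\operatorname{dist}(\cdot,\Sigma)/|\nabla\operatorname{dist}(\cdot,\Sigma)|\bigr)=(k-1)/\operatorname{dist}(\cdot,\Sigma)+O(1)\ge0$ for small $\operatorname{dist}(\cdot,\Sigma)$ (this is the paper's Lemma~\ref{Lsigma}), so $\operatorname{dist}(\cdot,\Sigma)$ is already a \emph{standing} subsolution---no shrinking tubes are needed---and the factor $e^{-\lambda t}$ only helps. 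Thus $z$ is a global subsolution with zero set $\Gamma_t^s$.

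With these two barriers in hand, a single application of the comparison principle (Proposition~\ref{Pcom} / Lemma~\ref{Lcompa}) on a big ball gives $z\le u\le w$ and hence $\Gamma_t=\Gamma_t^s$. Your proposed matching across $\{\operatorname{dist}(\cdot,\Sigma)=\eta\}$ and the appeal to mean-convex domains from Theorem~\ref{Tcsz} are unnecessary detours; the decomposition $E^\delta$ versus its complement, together with the fact that the supersolution touches the obstacle on $E^\delta$, is the whole trick.
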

By $C^{2,1}$, we mean that $\Gamma_t^s$ is $C^2$ in space and $C^1$ in time $t$;
 for precise definition, see \cite[Chapter 1]{G}.
 Theorem \ref{Tcons} evidently contains the problem \eqref{EMCD} when $\Sigma$ is a $2$-codimensional submanifold. 
 It also applies the case when $\Sigma$ has a boundary of lower dimension;
 see Figure \ref{Fhcod}.
\begin{figure}[htb]
\centering
\includegraphics[width=5cm]{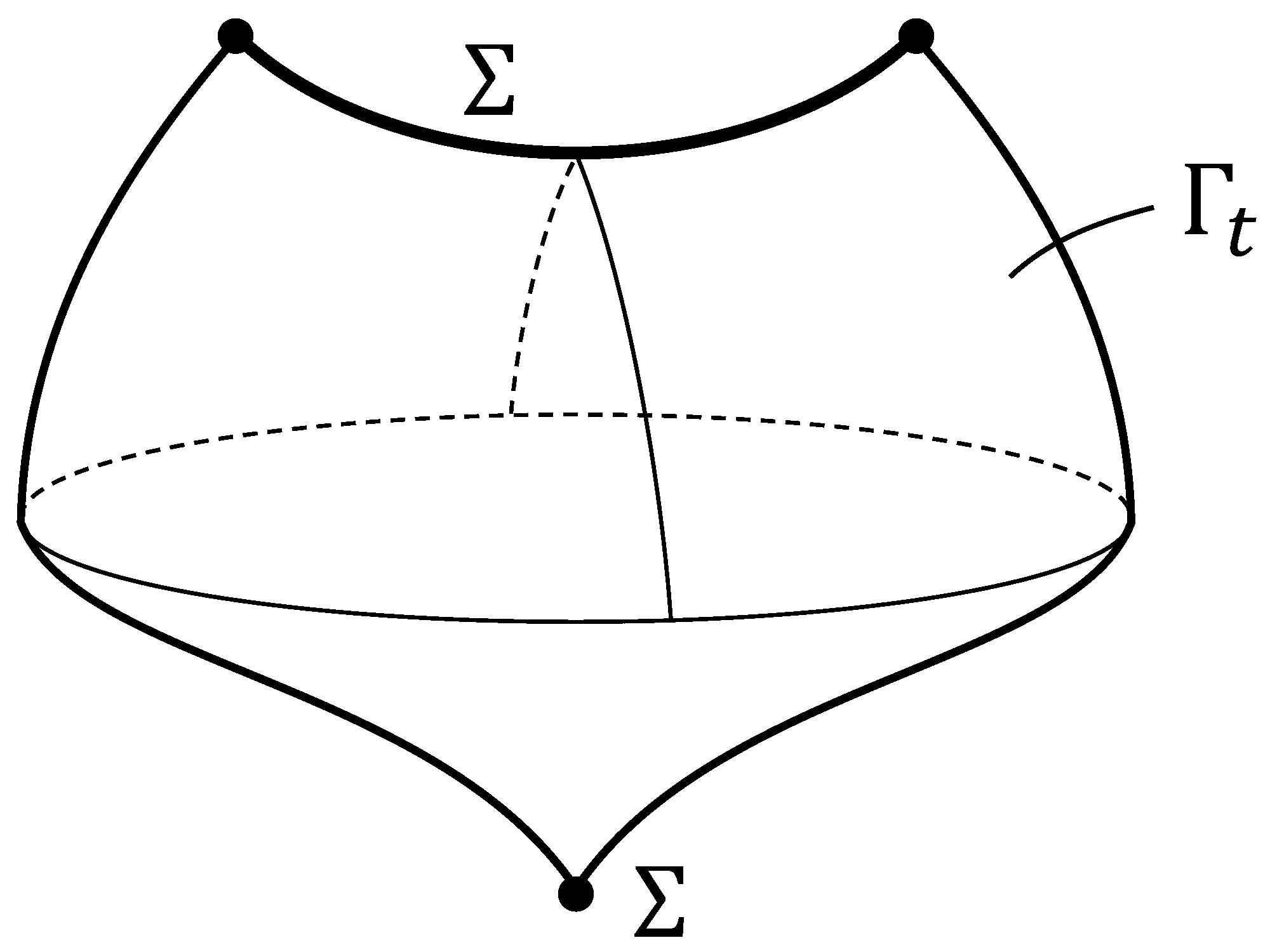}
\caption{example of $\Sigma$} \label{Fhcod}
\end{figure}

This paper is organized as follows.
 In Section \ref{S2}, we review a few basic facts on obstacle problems for a general spatially homogeneous equations including level-set flow equations. 
 In particular, we prove the spatially Lipschitz property and time H\"older continuity of a viscosity solution of the level-set flow equation provided that the initial data and obstacle is Lipschitz continuous.
 This improves the results in \cite{GTZ}.
 In Section \ref{S3}, we compare the level-set flow with obstacle $\Sigma$ to the level-set flow constructed by \cite{SZ} when $\Gamma_0$ is $\overline{U}$, where $U$ is a strictly mean-convex bounded $C^2$ domain.
 In particular, we prove Theorem \ref{Tcsz}. 
 In Section \ref{S4}, we compare with our solution to smooth solutions.
 In particular, we prove Theorem \ref{Tcons}. 
 In Section \ref{S5}, we give a few perspectives for further study.

The second author is grateful to Professor Tatsuya Miura for pointing out  \cite{Amb, AM} related to Lemma \ref{Lsigma}.

\section{Basic properties for obstacle problems} \label{S2} 

In this section as in \cite{M}, we recall several basic results for obstacle problems which applies to our level-set equation.
 Instead of stating just for \eqref{Elev}, we shall state them for general equations.

\subsection{Obstacle problems} \label{SS21} 

We consider an evolution equation of the form
\begin{equation} \label{EF1}
	u_t + F(\nabla u, \nabla^2 u) = 0.
\end{equation}
The equation \eqref{Elev} corresponds to the case
\begin{equation} \label{EF2}
	F(p,X) = -\operatorname{trace}\left( \left(I-\frac{p\otimes p}{|p|^2} \right) X \right).
\end{equation}
The standard set of assumptions for $F$ (see \cite{CGG, G}) including this example is
\begin{itemize}
\item[(F1)] $F:(\mathbb{R}^n\backslash 0)\times\mathbb{S}^n\to\mathbb{R}$ is continuous;
\item[(F2)] (degenerate ellipticity) $F(p,X)\leq F(p,Y)$ if $X\geq Y$, $p\in\mathbb{R}^n\backslash\{0\}$; 
\item[(F3)] $-\infty< F_*(0,O)=F^*(0,O)<\infty$.
\end{itemize}
Here $\mathbb{S}^n$ denotes the space of all real symmetric matrices and $X\geq Y$ means that $X-Y$ is a nonnegative definite matrix.
 The function $F^*$, $F_*$ are upper and lower semicontinuous envelope of $F$, respectively.
 Namely,
\begin{gather*}
	F^*(p,X) = \limsup_{q\to p,Y\to X} F(q,Y), \\
	F_*(p,X) = \liminf_{q\to p,Y\to X} F(q,Y).
\end{gather*}
We consider \eqref{EF1} under constraint $\psi^-\leq u\leq\psi^+$.
 The definition of viscosity solutions for bilateral obstacle problems is rather standard, see e.g.\ \cite{Ya} but we give it for completeness.
\begin{definition} \label{Dgol}
Let $\Omega$ be an open set in $\mathbb{R}^n$ and $T>0$ and $Q=\Omega\times(0,T)$.
 Let $\psi^\pm$ be a continuous function satisfying $\psi^-\leq\psi^+$ in $\overline{Q}$.
\begin{itemize}
\item[{\rm(i)}] A function $u:Q\to\mathbb{R}\cup\{-\infty\}$ is a viscosity \emph{subsolution} of \eqref{EF1} with obstacles $\psi^\pm$ in $Q$ if
\begin{itemize}
\item[{\rm(a)}] $u^*(x,t)<\infty$ for $(x,t)\in\overline{Q}$;
\item[{\rm(b)}] $\varphi_t(\hat{x},\hat{t})+F_*\left(\nabla\varphi(\hat{x},\hat{t}), \nabla^2\varphi(\hat{x},\hat{t})\right) \leq0$ whenever $\left(\varphi,(\hat{x},\hat{t})\right)\in C^2(Q)\times Q$ satisfies
\begin{gather*}
	\max_Q (u^*-\varphi) = (u^*-\varphi)(\hat{x},\hat{t}) \\
	\text{and}\quad u^*(\hat{x},\hat{t}) > \psi^- (\hat{x},\hat{t});
\end{gather*}
\item[{\rm(c)}] $\psi^-\leq u^*\leq\psi^+$ in $\overline{\Omega}\times[0,T)$.
\end{itemize}
\item[{\rm(i\hspace{-1.5pt}i)}] A function $u:Q\to\mathbb{R}\cup\{+\infty\}$ is a viscosity \emph{supersolution} of \eqref{EF1} with obstacles $\psi^\pm$ in $Q$ if
\begin{itemize}
\item[{\rm(a)}] $u_*(x,t)>\infty$ for $(x,t)\in\overline{Q}$; 
\item[{\rm(b)}] $\varphi_t(\hat{x},\hat{t})+F^*\left(\nabla\varphi(\hat{x},\hat{t}), \nabla^2\varphi(\hat{x},\hat{t})\right) \geq0$ whenever $\left(\varphi,(\hat{x},\hat{t})\right)\in C^2(Q)\times Q$ satisfies
\begin{gather*}
	\min_Q (u_*-\varphi) = (u_*-\varphi)(\hat{x},\hat{t}) \\
	\text{and}\quad u_*(\hat{x},\hat{t}) < \psi^+ (\hat{x},\hat{t});
\end{gather*}
\item[{\rm(c)}] $\psi^-\leq u_*\leq\psi^+$ in $\overline{\Omega}\times[0,T)$.
\end{itemize}
\item[{\rm(i\hspace{-1.5pt}i\hspace{-1.5pt}i)}] 
If $u$ is simultaneously a viscosity sub- and supersolution of \eqref{EF1} with obstacles $\psi^\pm$ in $Q$, 
then $u$ is said to be a viscosity solution. 
\end{itemize}
\end{definition}

The function $\psi^+$ (resp., \ $\psi^-$) is called an upper (resp., a lower) obstacle.
 Our definition trivially extends to the problem with upper obstacle $\psi^+$ only by taking $\psi^-\equiv-\infty$.
 Our definition is slightly different from \cite{M} since we do not include initial data in our definition.
 The definitions in \cite{ES} and \cite{CGG} (without $\psi^\pm$) given for $F$ in \eqref{EF2} are different each other at the place where $\nabla\varphi(\hat{x},\hat{t})=0$ but it turns out that they are equivalent as proved by \cite{BG};
 see also \cite[Proposition 2.2.8]{G} for general $F$ satisfying (F1) -- (F3).

As already pointed out in \cite{M} (see also \cite[Example 1.7]{CIL}), $u$ is a viscosity subsolution of \eqref{EF1} with obstacles $\psi^\pm$ if and only if $u$ is a viscosity subsolution of
\[
	G (x, t, u, u_t, \nabla u, \nabla^2 u ) = 0
\]
with
\[
	G (x, t, r, \tau, p, X) = \max \left( \min \left( \tau + F(p,X), r - \psi^-(x,t)\right), r - \psi^+(x,t) \right).
\]
This approach is useful to discuss stability of viscosity solutions, but to establish comparison principle, the evolutional structure like $u_t+F(\nabla u,\nabla^2 u)=0$ is a key.

Here is a fundamental unique existence result.
\begin{prop} \label{Pue}
Assume (F1) -- (F3).
 Assume that $\psi^\pm$ be uniformly continuous in $\mathbb{R}^n\times[0,T]$ for any $T>0$ and $\psi^-\leq\psi^+$.
 Assume that $u_0\in BUC(\mathbb{R}^n)$ with $\psi^-\leq u_0\leq\psi^+$ in $\mathbb{R}^n\times\{0\}$.
 Then there exists a unique viscosity solution $u$ to \eqref{EF1} with obstacles $\psi^\pm$ in $\mathbb{R}^n\times(0,T)$ such that $u(x,0)=u_0(x)$ and $u\in BUC\left(\mathbb{R}^n\times[0,T]\right)$.
\end{prop}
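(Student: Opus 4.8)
The plan is to deduce the unique existence from the standard theory for the single-valued Hamilton–Jacobi equation $u_t + F(\nabla u, \nabla^2 u) = 0$ (i.e.\ the case without obstacles, which holds by \cite{CGG, G} under (F1)–(F3)) together with a Perron-type construction adapted to the two-sided constraint. For uniqueness, I would first establish a comparison principle: if $u$ is a subsolution and $v$ a supersolution of \eqref{EF1} with obstacles $\psi^\pm$, with $u^*(\cdot,0)\le v_*(\cdot,0)$, then $u^*\le v_*$ in $\overline Q$. The key observation, already hinted at in the excerpt, is that the obstacle problem is equivalent to the scalar equation $G(x,t,u,u_t,\nabla u,\nabla^2 u)=0$ with $G$ the $\max$–$\min$ combination of $\tau+F(p,X)$, $r-\psi^-$, $r-\psi^+$. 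However, as the text notes, $G$ depends on $(x,t,r)$ in a way that is merely uniformly continuous (through $\psi^\pm$), and the $\max$–$\min$ structure destroys monotonicity in $r$ in the naive sense, so one cannot simply quote a black-box comparison theorem for $G$. Instead I would run the standard doubling-of-variables argument directly on the pair $(u,v)$: at an interior maximum point $(\hat x,\hat t,\hat y,\hat s)$ of $u^*(x,t)-v_*(y,s)-|x-y|^2/(2\varepsilon)-|t-s|^2/(2\varepsilon)-$(penalization), one splits into cases. If $u^*(\hat x,\hat t)>\psi^-(\hat x,\hat t)$ and $v_*(\hat y,\hat s)<\psi^+(\hat y,\hat s)$, the test-function inequalities for $F$ combine via the Crandall–Ishii lemma exactly as in the obstacle-free case (using (F1)–(F3) and the uniform continuity of $\psi^\pm$ to absorb error terms). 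If instead $u^*(\hat x,\hat t)\le \psi^-(\hat x,\hat t)$, then since $v_*\ge\psi^-$ everywhere and $\psi^-$ is uniformly continuous, $u^*(\hat x,\hat t)-v_*(\hat y,\hat s)\le \psi^-(\hat x,\hat t)-\psi^-(\hat y,\hat s)\to 0$; symmetrically if $v_*(\hat y,\hat s)\ge\psi^+(\hat y,\hat s)$. In all cases the standard limiting procedure yields $\sup_{\overline Q}(u^*-v_*)\le 0$.

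For existence, I would use Perron's method. One builds a subsolution and a supersolution with the correct initial data by exploiting the obstacle-free solution: let $w$ be the unique $BUC$ viscosity solution of $w_t+F(\nabla w,\nabla^2 w)=0$ with $w(\cdot,0)=u_0$, which exists by \cite{CGG, G}. Then $\underline u := \max(\psi^-, \min(w,\psi^+))$ restricted appropriately (or a similar envelope built from $\psi^-$) serves as a subsolution and $\overline u$ built from $\psi^+$ as a supersolution; more cheaply, since $\psi^-$ is itself uniformly continuous and below $\psi^+$, one checks directly that $\psi^-$ is a subsolution and $\psi^+$ a supersolution of the obstacle problem (clauses (b) are vacuous at points where $u^*=\psi^-$, resp.\ $u_*=\psi^+$, by construction), and these have the right ordering at $t=0$ after truncating $w$ between them. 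Taking $u(x,t):=\sup\{v(x,t) : v \text{ subsolution},\ \psi^-\le v\le\psi^+,\ v(\cdot,0)\le u_0\}$, the usual Perron bump-function argument shows $u^*$ is a subsolution and $u_*$ is a supersolution of \eqref{EF1} with obstacles $\psi^\pm$; the bump construction must be done carefully so as not to violate the obstacle constraints, but since a small upward perturbation of $u$ at a point where $u_*<\psi^+$ stays below $\psi^+$ (by continuity) and automatically stays above $\psi^-$, this goes through. Comparison then forces $u^*\le u_*$, hence $u$ is continuous and is the desired solution. Attainment of the initial data $u(x,0)=u_0(x)$ and global $BUC$ regularity follow by constructing explicit barriers at $t=0$ (paraboloid-type functions, using (F3) to control $F$ near $\nabla u=0$ is not needed here since one can use nondegenerate barriers, sandwiched between $\psi^\pm$ near $t=0$ using their uniform continuity) and by the translation-invariance in $(x,t)$ of $F$ together with comparison.

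The main obstacle I anticipate is the comparison argument in the case distinction above: one must verify that the obstacle constraints (c) in Definition \ref{Dgol}, which are imposed on $u^*$ and $u_*$ on the closure $\overline\Omega\times[0,T)$ rather than only where the PDE is tested, interact correctly with the doubling points, and that the uniform (not merely local) continuity of $\psi^\pm$ is genuinely used to kill the cross terms $\psi^\pm(\hat x,\hat t)-\psi^\pm(\hat y,\hat s)$ as $\varepsilon\to 0$ uniformly in $\mathbb{R}^n\times[0,T]$ — this is exactly where boundedness of the domain in \cite{M}'s original setting is replaced by uniform continuity of the data on all of $\mathbb{R}^n$. A secondary technical point is ensuring the Perron maximal subsolution does not touch the obstacles in a way that blocks the bump perturbation; handling this cleanly is the reason one phrases everything through the envelopes $\min(\cdot,\psi^+)$ and $\max(\cdot,\psi^-)$ from the outset.
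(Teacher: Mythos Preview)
Your proposal is correct and follows essentially the same route the paper itself indicates: the paper does not give a detailed proof of Proposition~\ref{Pue} but merely outlines the strategy (comparison principle adapted from \cite{M}, \cite{GGIS}, then Perron's method with barriers near the initial time), and your write-up fleshes out precisely that outline, including the case-splitting in the doubling argument and the barrier construction for initial-data attainment.
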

This is stated in \cite[Theorem 1]{M} under additional assumption that $\psi^\pm$ are bounded and that $F$ is assumed to be geometric in the sense of \cite{CGG}, i.e.,
\begin{equation} \label{Ege}
	F(\lambda p, \lambda X+\sigma p\otimes p)
	= \lambda F(p,X) \quad\text{for all}\quad
	\lambda > 0,\ \sigma \in \mathbb{R}, \ p \in \mathbb{R}^n \backslash\{0\},\ X \in \mathbb{S}^n
\end{equation}
However, it is unnecessary.
 The basic strategy is by now standard.
 We first establish a comparison principle as in \cite{M}, which is an adaptation of the standard strategy \cite[Theorem 8.2]{CIL} and a technique found in \cite[Theorem 4.1]{GGIS}, where the comparison principle is established for general equations including \eqref{EF1} but without obstacles;
 see \cite[Theorem 3.1.4]{G}.
 Once the comparison principle has been established, the existence of a viscosity solution can be established by what is called Perron's method by constructing a barrier near initial data.
 The uniqueness follows from the comparison principle.
 We give here a simple version of the comparison principle  for uniformly continuous functions in $\mathbb{R}^n$ for later convenience.
 In the next section, we give a version in a bounded domain but for semicontinuous functions.
\begin{prop} \label{Pcom}
Assume that (F1) -- (F3).
 For $T>0$, let $u$ and $v$, respectively, be a viscosity sub- and supersolution of \eqref{EF1} in $\mathbb{R}^n\times(0,T)$ with obstacles $\psi^\pm$ such that both $u$ and $v$ are uniformly continuous in $\mathbb{R}^n\times[0,T]$.
 If $u\leq v$ at $t=0$, then $u\leq v$ in $\mathbb{R}^n\times(0,T]$. 
\end{prop}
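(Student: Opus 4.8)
The plan is to run the classical doubling-of-variables argument of \cite[Theorem 8.2]{CIL}, adapted to the obstacle setting as in \cite{M}, using the equivalent single-equation reformulation $G(x,t,u,u_t,\nabla u,\nabla^2 u)=0$ with $G(x,t,r,\tau,p,X)=\max\bigl(\min(\tau+F(p,X),\,r-\psi^-(x,t)),\,r-\psi^+(x,t)\bigr)$ recorded just above. First I would reduce to a strict subsolution: replace $u$ by $u^\eta(x,t):=u(x,t)-\eta/(T-t)$ for $\eta>0$, which is a viscosity subsolution of $u^\eta_t+F(\nabla u^\eta,\nabla^2 u^\eta)\le -\eta/(T-t)^2<0$ wherever $u^\eta>\psi^-$, still satisfies $u^\eta\le\psi^+$ and $u^\eta\le u\le v$ at $t=0$, and it suffices to show $u^\eta\le v$ and let $\eta\downarrow 0$; this also forces any maximum to be attained at $t<T$ since $u^\eta\to-\infty$ as $t\uparrow T$. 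Suppose for contradiction that $\sup_{\mathbb{R}^n\times[0,T)}(u^\eta-v)=:m>0$.

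Next I would localize. Because $u$ and $v$ are only uniformly continuous (not decaying) on $\mathbb{R}^n$, I add a spatial penalization $-\gamma\langle x\rangle$ with $\langle x\rangle:=(1+|x|^2)^{1/2}$ for small $\gamma>0$ to ensure the supremum of $u^\eta(x,t)-v(y,t)-|x-y|^2/(2\varepsilon)-\gamma\langle x\rangle$ over $\mathbb{R}^n\times\mathbb{R}^n\times[0,T)$ is attained at some interior point $(x_\varepsilon,y_\varepsilon,t_\varepsilon)$; uniform continuity gives the standard estimates $|x_\varepsilon-y_\varepsilon|^2/\varepsilon\to 0$ and $t_\varepsilon$ bounded away from $0$ (for $\gamma,\eta$ small) as $\varepsilon\to0$. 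Applying the Crandall--Ishii lemma produces $\tau_1-\tau_2\ge$ (the test-function time derivatives, which here are equal since the doubling is time-independent), and symmetric matrices $X,Y$ with $X\le Y+O(\gamma)$ in the usual sense coming from $D^2(|x-y|^2/2\varepsilon)$ plus the Hessian of $\gamma\langle x\rangle$. The genuinely obstacle-specific point is the dichotomy at the maximum: either $u^\eta(x_\varepsilon,t_\varepsilon)>\psi^-(x_\varepsilon,t_\varepsilon)$, in which case the strict subsolution inequality for $F$ applies at $(x_\varepsilon,t_\varepsilon)$; or $u^\eta(x_\varepsilon,t_\varepsilon)=\psi^-(x_\varepsilon,t_\varepsilon)$, but then since $m>0$ and $u^\eta\le\psi^-$ would force $v(y_\varepsilon,t_\varepsilon)<\psi^-(y_\varepsilon,t_\varepsilon)\le\psi^-(x_\varepsilon,t_\varepsilon)+o(1)$, which is impossible because $v$ is a supersolution and hence $v\ge\psi^-$ everywhere; so in fact only the first alternative can occur once $\varepsilon$ is small. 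Symmetrically, $v(y_\varepsilon,t_\varepsilon)<\psi^+(y_\varepsilon,t_\varepsilon)$ holds because $v(y_\varepsilon,t_\varepsilon)\le u^\eta(x_\varepsilon,t_\varepsilon)-m+o(1)\le\psi^+(x_\varepsilon,t_\varepsilon)-m+o(1)<\psi^+(y_\varepsilon,t_\varepsilon)$ for small $\varepsilon$, so the $F$-supersolution inequality for $v$ is available at $(y_\varepsilon,t_\varepsilon)$.

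Subtracting the two viscosity inequalities then yields
\[
\frac{\eta}{(T-t_\varepsilon)^2}\;\le\;F^*\!\bigl(p_\varepsilon+\gamma\nabla\langle x_\varepsilon\rangle,\,Y\bigr)-F_*\!\bigl(p_\varepsilon,\,X\bigr),
\]
where $p_\varepsilon=(x_\varepsilon-y_\varepsilon)/\varepsilon$. The main obstacle, exactly as in the obstacle-free theory for \eqref{Elev}, is that $F$ is singular at $p=0$: one must handle separately the case $p_\varepsilon\ne0$ (where (F1), (F2) and the matrix inequality $X\le Y$ give that the right-hand side is $\le o(1)$ as $\varepsilon\to0$, after also letting $\gamma\to0$, using the continuity of $F$ and the fact that $|p_\varepsilon|$ stays bounded) and the degenerate case $p_\varepsilon\to0$ (where one invokes (F3), i.e.\ $F_*(0,O)=F^*(0,O)$ finite, together with the trick from \cite[Theorem 4.1]{GGIS} of perturbing the doubling function by a term like $|x-y|^4$ so that the gradient and Hessian estimates improve and the limiting arguments at $p=0$ close). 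In either case the right-hand side tends to a nonpositive limit, contradicting the strictly positive left-hand side $\eta/(T-t_\varepsilon)^2\ge \eta/T^2>0$. This contradiction shows $m\le0$, hence $u^\eta\le v$ on $\mathbb{R}^n\times[0,T)$; letting $\eta\downarrow0$ gives $u\le v$ on $\mathbb{R}^n\times(0,T)$, and continuity up to $t=T$ extends this to $\mathbb{R}^n\times(0,T]$. I would remark that the only genuinely new ingredient beyond \cite[Theorem 3.1.4]{G} is the pair of dichotomy arguments showing the obstacle terms in $G$ are inactive at the doubled maximum, which is immediate from $m>0$ together with $\psi^-\le v$ and $u\le\psi^+$; everything else is verbatim the standard comparison proof for \eqref{EF1} without obstacles.
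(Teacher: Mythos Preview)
The paper does not actually prove Proposition~\ref{Pcom}; it simply refers the reader to \cite{GGIS} and \cite[Proposition~1]{M}. Your sketch is precisely the argument those references carry out: the \cite{CIL}-style doubling of variables, the strict-subsolution reduction via $-\eta/(T-t)$, the \cite{GGIS} device (the $|x-y|^4$ perturbation) to handle the singularity of $F$ at $p=0$ under (F3), and the obstacle-specific observation from \cite{M} that at a doubled maximum with positive gap $m>0$ the constraints $u^*>\psi^-$ and $v_*<\psi^+$ are automatically active. So your proposal is correct and matches the intended proof.

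One small wording issue: your dichotomy ``either $u^\eta(x_\varepsilon,t_\varepsilon)>\psi^-$ or $u^\eta(x_\varepsilon,t_\varepsilon)=\psi^-$'' is not exhaustive, since $u^\eta=u-\eta/(T-t)$ need not satisfy $u^\eta\ge\psi^-$. But your exclusion argument works verbatim for the case $u^\eta(x_\varepsilon,t_\varepsilon)\le\psi^-(x_\varepsilon,t_\varepsilon)$ (it still forces $v(y_\varepsilon,t_\varepsilon)<\psi^-(y_\varepsilon,t_\varepsilon)$ for small $\varepsilon$), and what you actually need is $u(x_\varepsilon,t_\varepsilon)>\psi^-(x_\varepsilon,t_\varepsilon)$, which follows since $u\ge u^\eta$. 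This is cosmetic.
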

For the proof see \cite{GGIS} or \cite[Proposition 1]{M}.

We conclude this subsection by proving regularity of the viscosity solution when both $\psi^\pm$ and $u_0$ are Lipschitz. 
 In \cite{M}, a spatial uniform regularity of a viscosity solution is estimated as
\[
	\left| u(x,t) - u(y,t) \right|
	\leq \omega \left( e^{Lt}|x-y| \right), 
\]
where $\omega$ is a modulus of continuity of the initial data $u_0$ while $L$ is some constant by extending the result of \cite[Lemma 2.15]{Fo}.
 We shall prove the Lipschitz preserving property which is an extension of results without obstacles (\cite{GGIS}, \cite[Chapter 3.5]{G}).
\begin{thm} \label{TLip}
Assume that (F1) -- (F3).
 Let $u$ be the viscosity solution of \eqref{EF1} with obstacles $\psi^\pm$ and initial data $u_0$.
 Assume that $u_0$, $\psi^\pm$ are Lipschitz continuous with constant $L$.
 Then
\begin{itemize}
\item[(i)] $u$ is Lipschitz in space with constant $L$, i.e.,
\begin{equation} \label{ELI}
	\left| u(x,t) - u(y,t) \right| \leq L |x-y|
	\quad\text{for}\quad x,y \in \mathbb{R}^n, \quad t > 0.\end{equation}  
Moreover,
\item[(i\hspace{-1.5pt}i)] $u$ is $1/(1+\alpha)$-H\"older continuous in time, i.e., 
\begin{equation} \label{EHOE}
	\left| u(x,t) - u(x,s) \right| \leq C |t-s|^{1/(1+\alpha)}
	\quad\text{for}\quad t,s \geq 0,
	\quad x \in \mathbb{R}^n
\end{equation}
with some constant $C$ (independent of $t$, $s$ and $x$) provided that $\alpha>0$ satisfies
\begin{equation} \label{EGr}
	c_M := \sup_{|p|\leq M,X\in\mathbb{S}^n} \left| F(p,X) \right| \bigm/ \left(|X| + 1\right)^\alpha < \infty
\end{equation}
for any $M>0$ and that $\psi$ is independent of time.
 In particular, when $F$ is given in \eqref{EF2} so that $\alpha=1$, $u$ is $1/2$-H\"older continuous in time.
\end{itemize}
\end{thm}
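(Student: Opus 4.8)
\emph{Proof strategy.}

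\emph{Part (i).} The plan is to use the spatial homogeneity of \eqref{EF1}. Fix $z\in\mathbb{R}^n$. Since $F$ is independent of $x$, the function $(x,t)\mapsto u(x+z,t)$ is the viscosity solution of \eqref{EF1} with obstacles $\psi^{\pm}(\,\cdot+z,\cdot\,)$ and datum $u_0(\,\cdot+z\,)$, while $(x,t)\mapsto u(x,t)+L|z|$ is the viscosity solution with obstacles $\psi^{\pm}+L|z|$ and datum $u_0+L|z|$. Since $u_0,\psi^{\pm}$ are $L$-Lipschitz we have $u_0(\,\cdot+z\,)\le u_0+L|z|$ and $\psi^{\pm}(\,\cdot+z,\cdot\,)\le\psi^{\pm}+L|z|$, so applying the comparison principle for the obstacle problem — in the mildly more general form in which the obstacles of the subsolution lie below those of the supersolution, which is proved by the very same doubling-of-variables argument as Proposition~\ref{Pcom} (at a would-be positive maximum where an obstacle is active, the ordering of the obstacles still yields the contradiction) — gives $u(x+z,t)\le u(x,t)+L|z|$. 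Interchanging $z\leftrightarrow -z$ yields \eqref{ELI}.

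\emph{Part (ii), reduction to small times.} By (i) each $u(\cdot,s)$ is $L$-Lipschitz, and $\psi^{-}\le u(\cdot,s)\le\psi^{+}$ since $u$ is a viscosity solution; as $F$ is autonomous and $\psi^{\pm}$ is time-independent, uniqueness (Proposition~\ref{Pue}) shows $u(\cdot,s+\cdot)$ is the viscosity solution of the obstacle problem with datum $u(\cdot,s)$. Hence it is enough to prove $|u(x,t)-u_0(x)|\le C\,t^{1/(1+\alpha)}$ for $0\le t\le1$, with $C$ depending only on $n$, $L$ and the constant $c_L$ in \eqref{EGr} (not otherwise on the datum): applied to $u(\cdot,s+\cdot)$ this gives \eqref{EHOE} whenever $|t-s|\le1$, and for $|t-s|\ge1$ the estimate is immediate from the local boundedness of $u$.

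\emph{Part (ii), the barrier.} The core is to build, for each $x_0$, a radial supersolution
\[
\varphi^{+}(x,t)=u_0(x_0)+
\begin{cases}
c_0(t)+\dfrac{L|x-x_0|^2}{2R(t)}, & |x-x_0|\le R(t),\\[1.5ex]
c_1(t)+L|x-x_0|, & |x-x_0|\ge R(t),
\end{cases}
\]
a paraboloidal cap of radius $R(t)=b\,t^{1/(1+\alpha)}$ matched in value and first derivative to a cone of slope $L$, so that $c_1=c_0-\tfrac12LR$, $c_1(0)=0$, with $c_0$ solving $\dot c_0(t)\ge\tfrac12L\dot R(t)+c_L\bigl(L/R(t)+1\bigr)^{\alpha}$. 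By \eqref{EGr} the right-hand side is $O\!\bigl(t^{-\alpha/(1+\alpha)}\bigr)$, hence $c_0(t)=O\!\bigl(t^{1/(1+\alpha)}\bigr)$ on $[0,1]$; and for $b$ large one checks that $\varphi^{+}$ is a viscosity supersolution of \eqref{EF1}: inside the cap $\nabla^2\varphi^{+}=\tfrac{L}{R}I$ and $\partial_t\varphi^{+}\ge c_L(|\nabla^2\varphi^{+}|+1)^{\alpha}$ by the ODE; outside $|\nabla^2\varphi^{+}|=L/|x-x_0|\le L/R$; on the matching sphere $\nabla\varphi^{+}=L(x-x_0)/|x-x_0|\neq0$ and any admissible second-order test datum is $\le\tfrac{L}{R}I$, so degenerate ellipticity (F2) reduces this case to the exterior check; at the apex $x=x_0$ one argues via (F2), (F3) and \eqref{EGr}. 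Moreover $\varphi^{+}(x,0)=u_0(x_0)+L|x-x_0|\ge u_0(x)$, and $\varphi^{+}\ge u_0(x_0)+L|x-x_0|\ge u_0(x)\ge\psi^{-}(x)$ everywhere, so $\varphi^{+}$ is a supersolution of \eqref{EF1} with lower obstacle $\psi^{-}$ and no upper obstacle, lying above $u$ at $t=0$; by comparison (making it strict by adding $\varepsilon t$, letting $\varepsilon\to0$, and restricting to a large ball so that the bounded-domain comparison principle of Section~\ref{S2} applies) $u\le\varphi^{+}$, and evaluation at $x_0$ gives $u(x_0,t)-u_0(x_0)\le c_0(t)=O\!\bigl(t^{1/(1+\alpha)}\bigr)$. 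A mirror-image subsolution $\varphi^{-}\le\psi^{+}$ gives the reverse inequality, completing (ii).

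\emph{Main obstacle.} The delicate point is the barrier. The naive choice $u_0(x_0)+L\sqrt{|x-x_0|^2+\ell(t)^2}+\Lambda t$ is a supersolution only for $\ell(t)\gtrsim t^{1/2}$, so it yields merely the exponent $1/2$ (which is sharp only when $\alpha=1$). The paraboloidal-cap profile is tailored precisely so that $\dot c_0$, $\tfrac12L\dot R$ and $c_L(L/R)^{\alpha}$ all scale like $t^{-\alpha/(1+\alpha)}$ — this is exactly where \eqref{EGr} is used sharply — and one must verify in addition that the cap/cone junction and the apex pass the viscosity test (both reduce, via (F2), to the cap estimate). Once the barrier is available, incorporating the obstacle (it suffices that $\varphi^{\pm}$ lie on the correct side of $u_0$) and the comparison are routine, along the lines of \cite{ES} and \cite[Chapter~3]{G}.
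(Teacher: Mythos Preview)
Your argument is correct in both parts. Part~(i) is essentially the paper's proof: where you shift the obstacles and invoke a comparison principle with ordered obstacles, the paper instead truncates the shifted function by $\wedge\,\psi^{+}$ (resp.\ $\vee\,\psi^{-}$) so as to compare against $u$ with the \emph{same} obstacles and apply Proposition~\ref{Pcom} as stated --- a cosmetic rearrangement of the same idea.

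For Part~(ii) your barrier works, but it is more elaborate than necessary, and your dismissal of the ``naive'' mollified cone is mistaken. The paper takes precisely
\[
h(x,t)=u_0(x_0)+L\bigl(\delta+|x-x_0|^2\bigr)^{1/2}+Lc\,\delta^{-\alpha/2}\,t
\]
with a \emph{fixed} parameter $\delta>0$: then $|\nabla h|\le L$ and $|\nabla^2 h|\le L\delta^{-1/2}$, so \eqref{EGr} makes $h$ (hence $v^{+}=h\wedge\psi^{+}$) a supersolution of the obstacle problem once $c$ is chosen large. Comparison yields $u(x_0,t)-u_0(x_0)\le L\bigl(ct\,\delta^{-\alpha/2}+\delta^{1/2}\bigr)$, and only \emph{after} this does one optimize over $\delta$, setting $ct\,\delta^{-\alpha/2}=\delta^{1/2}$, which gives the sharp bound $2L(ct)^{1/(1+\alpha)}$. (Even your own scaling heuristic should yield this: the supersolution condition forces $\dot\ell\gtrsim\ell^{-\alpha}$, which integrates to $\ell\gtrsim t^{1/(1+\alpha)}$, not $t^{1/2}$.) The payoff of the paper's choice is that $h$ is globally smooth, so there is no cap/cone junction and no apex to analyze; your paraboloid-cap construction is valid but buys nothing over this simpler route.
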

\begin{remark} \label{RLip}
A similar Lipschitz continuity of $u$ is proved in \cite{GTZ}
\begin{equation} \label{Edr}
	F(p,X) = -\operatorname{trace} \left( \left(I-\frac{p\otimes p}{|p|^2} \right) X \right) + k|p|, \quad
	k \in \mathbb{R}
\end{equation}
under additional regularity assumptions on the initial data.
 Time regularity was not mentioned.
 Our proof is a modification of the proof given in \cite{GP} for spatially inhomogeneous  crystalline mean curvature flow corresponding to the case $\alpha=1$.
\end{remark}
\begin{proof}
We first prove the Lipschitz continuity.
 We set
\begin{gather*}
	u^+(x,t) := \left(u(x+z,t) + L|z|\right) \wedge \psi^+(x,t), \\
	u^-(x,t) := \left(u(x+z,t) - L|z|\right) \vee \psi^-(x,t)
\end{gather*}
for $x, z\in \mathbb{R}^n$ and $t\in [0,T)$. 
 Here, $a\wedge b=\min(a,b)$, $a\vee b=\max(a,b)$ for $a,b\in\mathbb{R}$. 
 We shall claim $u^+$ is a viscosity supersolution and $u^-$ is a viscosity subsolution to the obstacle problem, respectively, satisfying $u^+\geq u_0\geq u^-$ at $t=0$.

Here, we only prove the claim for $u^+$.
 One hand, by definition $u^+\leq \psi^+$, and we have
\[
u(x+z,t)+L|z|\geq\psi^-(x+z,t)+L|z|\geq\psi^-(x,t) 
\]
by using the fact that $\psi^-$ is $L$-Lipschiz. It means 
\begin{equation} \label{EL1}
	\psi^-(x,t)\leq u^+(x,t)\leq\psi^+(x,t) 
\end{equation}
for all $(x,t)\in \mathbb{R}^n\times[0,T)$. On the other hand, since the initial data $u_0$ is $L$-Lipschiz, we arrive at
\begin{equation} \label{EL2}
u(x+z,0)+L|z|=u_0(x+z)+L|z|\geq u_0(x),
\end{equation}
which implies $u^+(x,0)\geq u_0(x)$. Next, we prove $u^+$ satisfies \eqref{EF1}.
 Let $\varphi$ be a smooth test function,  $(\hat x,\hat t)$ be a minimum point, such that
\[
	u^+(\hat x,\hat t)-\varphi(\hat x,\hat t)\leq u^+(x,t)-\varphi(x,t),
\]
and assume 
$u^{+}(\hat x,\hat t)<\psi^+(\hat x,\hat t)$. 
By definition of $u^+$, $u^{+}(\hat x,\hat t)=u(\hat x+z,\hat t)+L|z|$, which implies 
We have
\begin{equation} \label{EL3}
	u(\hat x+z,\hat t)+L|z|-\varphi(\hat x,\hat t)\leq u(x+z,t)+L|z|-\varphi(x,t).
\end{equation}
Now, we set $y=x+z$, $\hat y=\hat x+z$, and define $\varphi(x,t)=\phi(x+z,t)$, then \eqref{EL3} becomes 
\begin{equation} \label{EL4}
	u(\hat y,\hat t)-\phi(\hat y,\hat t)\leq u(y,t)-\phi(y,t).
\end{equation}
Thus $u-\phi$ attains its minimum at $(\hat y,\hat t)$. We also have 
\[
	u(\hat y,\hat t)=u(\hat x +z,\hat t)< \psi^+(\hat x,\hat t)-L|z|\leq\psi^+(\hat x+z,\hat t)=\psi^+(\hat y,\hat t).
\]
As $u$ is a viscosity supersolution at $(\hat y,\hat t)$, we deduce that
\[
	\left(\phi_t +  F^*(\nabla\phi,\nabla^2\phi)\right)(\hat y,\hat t) \geq 0.
\]
Besides, since $\phi_t(\hat y,\hat t)=\varphi_t(\hat x,\hat t)$, $\nabla\phi(\hat y,\hat t)=\nabla\varphi(\hat x,\hat t)$, we obtain
\[
	\left(\phi_t +  F^*(\nabla\phi,\nabla^2\phi)\right)(\hat x,\hat t) \geq 0.
\]
Consequently, $u^+$ is a viscosity supersolution. Similarly, we can prove $u^-$ is a viscosity subsolution. By the comparison principle (Proposition \ref{Pcom}), $u^-\leq u\leq u^+$, which means
\[
	u(x+z,t)-L|z|\leq u(x,t)\leq u(x+z,t)+L|z|.
\]
Therefore,
\[
\left|u(x+z,t)-u(x,t)\right|\leq L|z|.
\]

We next prove the H\"older continuity in time.
 By Lipschitz continuity \eqref{ELI}, we have
\[
	u(x,t) - u(x_0,t) \leq L|x-x_0| \leq L\left(\delta+|x-x_0|^2\right)^{1/2} \quad\text{for}\quad \delta > 0.
\]
We set
\begin{align*}
	&v^+(x,t) := h(x,t) \wedge \psi^+(x,t), \\
	&h(x,t) := L\left(ct\delta^{-\alpha/2}+\left(\delta+|x-x_0|^2\right)^{1/2} \right) + u_0(x_0).
\end{align*}
We claim that $v^+$ is a viscosity supersolution of \eqref{EF1} with obstacles $\psi^\pm$ provided that $c$ is taken sufficiently large (independent of $\delta<1$).
 In the place where $v^+(x,t)=\psi^+(x,t)$, nothing has to be done.
 We may assume that $v^+(x,t)=h(x,t)$.
 Since $|\nabla h|\leq L$, $|\nabla^2h|\leq L/\delta^{1/2}$, the assumption \eqref{EGr} implies that
\[
	\left| F^*(\nabla h,\nabla^2h) \right|
	\leq c_L\left( \frac{L}{\delta^{1/2}} + 1 \right)^\alpha
	\leq C_L' \delta^{-\alpha/2}
\]
with some $C_L'$ independent of $\delta>0$.
 Thus, if $c>C_L'/L$, then
\[
	h_t + F^*(\nabla h,\nabla^2h) \geq Lc\delta^{-\alpha/2} - C_L' \delta^{-\alpha/2} \geq 0.
\]
Since $u_0$ is Lipschitz in space and satisfies $u_0\geq\psi^-$, we observe that  
\[
	h(x,t) \geq Lct\delta^{-\alpha/2} + u_0(x) \geq \psi^-(x).
\]
Thus, $\psi^-\leq v^+\leq\psi^+$ so $v^+$ is a viscosity supersolution.
 Since $v^+(x,0)\geq u_0(x)$, by the comparison principle (Proposition \ref{Pcom}), we arrive at
\[
	u(x_0,t) \leq v^+(x_0,t) \leq h(x_0,t) \leq u_0(x_0) + L(ct\delta^{-\alpha/2} + \delta^{1/2})
\]
for any $x_0\in\mathbb{R}^n$.
 We thus obtain
\[
	u(x,t) \leq u_0(x) + L(ct\delta^{-\alpha/2} + \delta^{1/2}).
\]
A symmetric argument implies that
\[
	u(x,t) \geq u_0(x) - L(ct\delta^{-\alpha/2} + \delta^{1/2}).
\]
We take $\delta$ so that $ct\delta^{-\alpha/2}=\delta^{1/2}$ to obtain
\[
	\left| u(x,s) - u(x,0) \right| \leq As^{1/(1+\alpha)}
	\quad\text{for}\quad s > 0
\]
with $A=2Lc^{1/(1+\alpha)}$.
 If one considers $u(x,t)$ as a initial data, we conclude that
\[
	\left| u(x,t+s) - u(x,t) \right| \leq As^{1/(1+\alpha)}
	\quad\text{for}\quad s \geq 0
\]
and this is the desired estimate.
\end{proof}
\begin{remark} \label{Rbar}
The function $v^\pm$ is often called a barrier which prevents a sudden jump of a viscosity solution at $t=0$.
 A standard choice is
\[
	h(x,t) = L\left(\frac{ct}{\delta} + \delta + \frac{|x-x_0|^2}{4\delta}\right) + u_0(x_0)
\]  
by observing $|x|\leq\delta+|x-x_0|^2/4\delta$.
 This choice is not convenient if $F(p,X)$ is of the form \eqref{Edr} with $k\neq0$ since $|\nabla h|$ is unbounded and $|\nabla h|\to\infty$ as $\delta\downarrow0$.
\end{remark}
\subsection{Orientation free motion} \label{SS22} 

We consider a general surface evolution
\begin{equation} \label{EGEN}
	V = f (\nu, \mathbf{A})
\end{equation}
for an evolving hypersurface $\{\Gamma_t\}$, where $\mathbf{A}$ denotes the second fundamental form in the direction of $\nu$.
 If $f(\nu,A)=\operatorname{trace}A$, the equation becomes the mean
 curvature flow equation $V=H$.
 If each level set of a viscosity solution of \eqref{EF1} satisfies \eqref{EGEN}, i.e., \eqref{EF1} is the \emph{level-set equation} of \eqref{EGEN}, then the equation is geometric, i.e., $F$ satisfies \eqref{Ege}.
 Conversely, if \eqref{EF1} satisfies \eqref{Ege}, then it is a level-set equation of \eqref{EGEN} provided that $F$ satisfies (F1) and (F2);
 see \cite{GG} or \cite[Theorem 1.6.12]{G}.
 Without (F2), \eqref{EF1} with \eqref{Ege} may not be a level-set equation of any \eqref{EGEN};
 \cite{GG} or \cite[Subsection 1.6.4]{G}.

The assumption (F3) corresponds the growth condition of \eqref{EGEN} of $f$ with respect to $\mathbf{A}$.
 It is fulfilled if the growth is sublinear or linear in $\mathbf{A}$;
 see \cite[Subsection 1.6.5]{G}.

The evolution of each level set is determined by the corresponding initial level set and the level set of an obstacle function.
\begin{prop} \label{PUN}
Assume the same hypotheses of Proposition {\rm\ref{Pue}} concerning $F$, $u_0$ and $\psi^\pm$.
 Let 
\[
	S_\pm = \left\{ (x,t) \in \mathbb{R}^n \times [0,T] \bigm|
	\psi^\pm(x,t) = 0 \right\}.
\] 
Let $u$ be the viscosity solution of \eqref{EF1} with obstacles $\psi^\pm$ in $\mathbb{R}^n\times(0,T)$ and initial data $u_0$.
 Then the set
\[
	D_t = \left\{ x \bigm| u(x,t) > 0 \right\}	\ \text{{\rm(}resp., }\ 
	E_t = \left\{ x \bigm| u(x,t) \geq 0 \right\}\text{{\rm)},} \quad
	t \in [0,T)
\]
is uniquely determined by $D_0$ {\rm(}resp., \ $E_0${\rm)} and $S^\pm$, and it is independent of the choice of $u_0$ and $\psi^\pm$.
\end{prop}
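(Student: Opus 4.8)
The plan is to exploit the orientation-free (geometric) structure of the equation via the invariance of viscosity solutions under monotone relabeling of the level sets. The key standard fact is the following: if $F$ satisfies (F1)--(F3) and is geometric, i.e.\ \eqref{Ege} holds, and if $u$ is the viscosity solution of \eqref{EF1} with obstacles $\psi^\pm$ and initial data $u_0$, then for any nondecreasing continuous $\theta:\mathbb{R}\to\mathbb{R}$ the function $\theta\circ u$ is the viscosity solution with initial data $\theta\circ u_0$ and obstacles $\theta\circ\psi^\pm$; see \cite{CGG}, \cite{GG}, or \cite[Section~4.2]{G}. The subtlety with obstacles is that $\theta$ must be applied simultaneously to $u_0$, $\psi^+$ and $\psi^-$, which is exactly why $\theta$ must fix $0$ for the statement to be useful: since the zero-level sets of $\psi^\pm$ are prescribed, we will only use $\theta$ with $\theta(0)=0$, so that $\{\psi^\pm=0\}$ is preserved and the relevant data are $D_0$, $E_0$ and $S_\pm$ together with the sign pattern of $\psi^\pm$ near $S_\pm$.

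First I would fix two admissible pairs of data $(u_0,\psi^\pm)$ and $(\tilde u_0,\tilde\psi^\pm)$ giving the same $D_0=\tilde D_0$, $E_0=\tilde E_0$ and $S_\pm=\tilde S_\pm$ (with matching sign of $\psi^\pm$ on the two sides of $S_\pm$, which is forced since $\psi^-\le\psi^+$ and their zero sets coincide). The standard approximation argument then produces, for each $\varepsilon>0$, a continuous nondecreasing $\theta_\varepsilon$ with $\theta_\varepsilon(0)=0$ such that $\theta_\varepsilon\circ u_0\ge\tilde u_0-\varepsilon$ on all of $\mathbb{R}^n$, and $\theta_\varepsilon\circ\psi^\pm\ge\tilde\psi^\pm-\varepsilon$: one builds $\theta_\varepsilon$ so that it is very steep across the value $0$, mapping small positive values of $u_0$ above $\sup\tilde u_0$ and small negative values below $\inf\tilde u_0$, while staying $\le$ the obstacle relabeling is automatic because the same $\theta_\varepsilon$ is used. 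The point is purely that $\{u_0>0\}\supseteq\{\tilde u_0>0\}$ up to the $\varepsilon$-slack once the zero sets agree, and similarly for $\psi^\pm$; since the zero sets agree exactly this is a one-dimensional relabeling exercise, and $\varepsilon$-slack is harmless. By Proposition \ref{Pue} the solution with data $(\theta_\varepsilon\circ u_0,\theta_\varepsilon\circ\psi^\pm)$ is $\theta_\varepsilon\circ u$, and by the comparison principle Proposition \ref{Pcom} applied after a further $\varepsilon$-shift of the obstacles (monotone in the data, as in \cite{M}), we get $\theta_\varepsilon\circ u\ge\tilde u-C\varepsilon$ in $\mathbb{R}^n\times[0,T)$. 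Evaluating at a point where $\tilde u(x,t)>0$ and letting $\varepsilon\downarrow0$ (using $\theta_\varepsilon(s)>0\iff s>0$) gives $u(x,t)>0$, hence $\tilde D_t\subseteq D_t$; by symmetry $D_t=\tilde D_t$. For $E_t$ the same argument with non-strict inequalities and $\theta_\varepsilon(s)\ge0\iff s\ge0$ gives $E_t=\tilde E_t$. Independence of the particular $u_0$ and $\psi^\pm$ realizing a given geometric datum is the special case where the two geometric inputs are literally equal.

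The main obstacle is the obstacle itself: one must verify that the relabeling lemma $\theta\circ u$ really does solve the obstacle problem with relabeled obstacles, and that the comparison principle is monotone in the obstacle data (if $\psi^\pm\le\Phi^\pm$ then the respective solutions are ordered). Both are essentially in \cite{M}, but for degenerate $F$ the relabeling statement needs the sup-/inf-convolution approximation of $\theta$ together with the observation that the constraints $\psi^-\le u\le\psi^+$ transform to $\theta\circ\psi^-\le\theta\circ u\le\theta\circ\psi^+$ because $\theta$ is nondecreasing; the test-function inequalities at points away from the obstacle carry over by (F1)--(F3) and \eqref{Ege} exactly as in the obstacle-free case, and at points on the obstacle there is nothing to check. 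A secondary technical point is that $\theta_\varepsilon$ should be chosen Lipschitz (or at least so that $\theta_\varepsilon\circ\psi^\pm$ stays uniformly continuous on $\mathbb{R}^n\times[0,T]$) so that Proposition \ref{Pue} applies to the relabeled data; this is arranged by mollifying the piecewise-linear $\theta_\varepsilon$. Once these are in place the argument is the classical one from \cite[Theorem~4.2.1]{G} adapted to the bilateral obstacle setting.
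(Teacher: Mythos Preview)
Your strategy --- invariance under monotone relabeling $\theta$ combined with the comparison principle --- is exactly the route the paper sketches; it gives no detailed proof of Proposition~\ref{PUN} either, pointing instead to the obstacle-free argument of \cite{ES,CGG} and to \cite[Theorem~4.2.11]{G} for the unbounded case. Two minor corrections are worth recording. First, your parenthetical that the zero sets of $\psi^+$ and $\psi^-$ ``coincide'' is off: $S_+$ and $S_-$ are in general different sets, and the relabeling has to respect the sign regions of each obstacle separately (one handles all three pairs $u_0,\psi^+,\psi^-$ at once simply by taking the pointwise maximum of the three individual $\theta$'s). Second, the construction of a continuous $\theta_\varepsilon$ with $\theta_\varepsilon(0)=0$ and $\theta_\varepsilon\circ u_0\ge\tilde u_0-\varepsilon$ on all of $\mathbb{R}^n$ does not follow from the steep-at-zero recipe alone: for instance $u_0(x)=e^{-|x|}$ and $\tilde u_0\equiv 1$ share $D_0=\mathbb{R}^n$, yet no such $\theta_\varepsilon$ exists. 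This is precisely the difficulty the paper flags as ``more involved'' and defers to \cite[Theorem~4.2.11]{G}, so your sketch and the paper's are at the same level of detail here; just be aware that the unbounded case is where the real work hides.
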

%
If the equation \eqref{EGEN} is orientation-free, i.e., in the corresponding equation \eqref{EF1}
\begin{equation} \label{Eorf}
	F(-p,-X) = -F(p,X), \quad
	p \in \mathbb{R}^n \backslash \{0\}, \quad
	X \in \mathbb{S}^n
\end{equation}
holds, then we are able to conclude a stronger statement.
\begin{prop} \label{Porf}
Assume the same hypotheses of Proposition \ref{PUN} concerning $u_0$, $\psi^\pm$ and $u$.
 Assume further \eqref{Eorf}.
 Then the set $\Gamma_t=\left\{ x \bigm|u(x,t) = 0 \right\}$ {\rm($t\geq0$)} uniquely determined by $\Gamma_0$ and $S=S_+\cup S_-$ and independent of the choice of $u_0$,  $\psi_\pm$.
 In particular, we may assume $u_0\geq0$ so that $u\geq0$ and $\psi_-=-\infty$, $\psi_+\geq0$.
\end{prop}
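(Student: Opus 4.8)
The plan is to deduce Proposition~\ref{Porf} from Proposition~\ref{PUN} by exploiting the sign symmetry \eqref{Eorf}. First I would record its basic consequence: if $u$ is the viscosity solution of \eqref{EF1} with obstacles $(\psi^-,\psi^+)$ and initial data $u_0$, then $v:=-u$ is the viscosity solution of \eqref{EF1} with obstacles $(-\psi^+,-\psi^-)$ and initial data $-u_0$. This is a routine check against Definition~\ref{Dgol}: a $C^2$ function $\varphi$ touches $v_*=-u^*$ from below at $(\hat x,\hat t)$ exactly when $-\varphi$ touches $u^*$ from above there; the side condition $v_*(\hat x,\hat t)<-\psi^-(\hat x,\hat t)$ is the same as $u^*(\hat x,\hat t)>\psi^-(\hat x,\hat t)$; and \eqref{Eorf} gives $F_*(-p,-X)=-F^*(p,X)$, so that the subsolution inequality for $u$ becomes the supersolution inequality for $v$. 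The bilateral obstacle inequalities transform accordingly, the roles of sub- and supersolution interchange, and uniqueness (Proposition~\ref{Pue}) then identifies $v$. Note that $\{v(\cdot,t)=0\}=\{u(\cdot,t)=0\}=\Gamma_t$ and $\{v(\cdot,0)=0\}=\Gamma_0$, while the upper obstacle $-\psi^-$ of the $v$-problem has zero set $S_-$ and its lower obstacle $-\psi^+$ has zero set $S_+$: in the labelling of Proposition~\ref{PUN} the two obstacle zero sets get interchanged, but $S=S_+\cup S_-$ does not.

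Next I would apply Proposition~\ref{PUN} to both $u$ and $v=-u$. For $u$ it gives that $\{u(\cdot,t)>0\}$ and $\{u(\cdot,t)\geq0\}$ are determined by $\{u_0>0\}$, $\{u_0\geq0\}$ together with $S_+,S_-$; applied to $v$ it gives that $\{u(\cdot,t)<0\}=\{v(\cdot,t)>0\}$ and $\{u(\cdot,t)\leq0\}$ are determined by $\{u_0<0\}$, $\{u_0\leq0\}$ together with $S_-,S_+$. Since $\Gamma_t=\{u(\cdot,t)\geq0\}\cap\{u(\cdot,t)\leq0\}=\mathbb{R}^n\setminus(\{u(\cdot,t)>0\}\cup\{u(\cdot,t)<0\})$, this shows that $\Gamma_t$ is determined by the tuple $(\{u_0>0\},\{u_0<0\},S_+,S_-)$ and is invariant under simultaneously interchanging $\{u_0>0\}\leftrightarrow\{u_0<0\}$ and $S_+\leftrightarrow S_-$ (this being precisely the effect of $u\mapsto-u$). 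A normalized configuration $u_0\geq0$ (so $\{u_0>0\}=\mathbb{R}^n\setminus\Gamma_0$, $\{u_0<0\}=\emptyset$), $\psi^-\equiv-\infty$ (so $S_-=\emptyset$), $\psi^+\geq0$ with $\{\psi^+=0\}=S$ --- for which $0$ is a subsolution, hence $u\geq0$ --- always exists, and for it all four sets above, hence $\Gamma_t$, are functions of $\Gamma_0$ and $S$ alone directly by Proposition~\ref{PUN}; granted the full independence established below, this is the ``in particular'' assertion.

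It remains to show that every admissible configuration produces the same $\Gamma_t$ as the normalized one, i.e.\ that $\Gamma_t$ depends only on $\Gamma_0$ and $S$ and not on the finer splittings. Here I would use that \eqref{EF1} is geometric in the present level-set setting, so that $\theta\circ u$ is again a viscosity solution for every nondecreasing $\theta\in C(\mathbb{R})$, with obstacles $(\theta\circ\psi^-,\theta\circ\psi^+)$ (a standard property; see \cite{CGG} or \cite[Chapter~4]{G}) --- and, combining with the sign flip above, the same holds for nonincreasing $\theta$ with the obstacles interchanged. Given two admissible configurations with the same $\Gamma_0$ and the same $S$, I would relate their initial data and their obstacles by such functions $\theta$ and then compare the composed solutions by the comparison principle (Proposition~\ref{Pcom}), in both directions, to conclude $\{u(\cdot,t)=0\}=\{\tilde u(\cdot,t)=0\}$; this mirrors the obstacle-free proof that the level-set flow depends only on $\Gamma_0$. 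The step I expect to be the main obstacle is choosing $\theta$ so that \emph{all} the bilateral obstacle inequalities stay correctly ordered after composition, so that Proposition~\ref{Pcom} genuinely applies; this is exactly where the orientation-free property is indispensable, as it is what permits a point lying on one configuration's lower-obstacle zero set (or on one side of $\Gamma_0$) to be matched with the other configuration's upper-obstacle zero set (resp.\ the other side of $\Gamma_0$).
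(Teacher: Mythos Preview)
Your approach uses the same ingredients as the paper (invariance under composition with $\theta$, the comparison principle, and the orientation-free symmetry), but it is more circuitous, and the step you yourself flag as the main obstacle is indeed a gap you do not close.

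The paper's route is more direct. In the orientation-free case, the invariance lemma (Lemma~\ref{Linv}) holds for \emph{every} continuous $\theta$, monotone or not; in particular one may take $\theta(\sigma)=|\sigma|$. Thus $|u|$ is itself a viscosity solution, with obstacles $|\psi^\pm|$, and its zero level set is still $\Gamma_t$. This single move reduces any admissible configuration to a nonnegative one, and from there Proposition~\ref{PUN} together with the obstacle-free argument in \cite[Subsection~4.2.2]{G} finishes the identification of $\Gamma_t$ with the normalized flow.

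Your Step~4 tries to relate two configurations by composing with monotone $\theta$ (or, via the sign flip, anti-monotone $\theta$). But a monotone $\theta$ with $\theta(0)=0$ preserves the splitting $(S_+,S_-)$, and the sign flip merely interchanges the two pieces; neither operation lets you pass from a configuration with a nontrivial partition $(S_+,S_-)$ to the normalized one with $(S,\emptyset)$. The same obstruction appears on the initial-data side, where monotone $\theta$ and sign flips cannot collapse $\{u_0<0\}$ into $\{u_0>0\}$. This is precisely where a genuinely non-monotone $\theta$ is required, and $\theta=|\cdot|$ is the natural choice. So the worry you raised is exactly the missing idea; once you invoke the full invariance lemma for orientation-free equations, your Steps~1--3 become unnecessary and the argument collapses to the paper's one-line reduction.
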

For orientation-free motion, we give a definition of a level-set flow with obstacle.
 For simplicity, we assume that the obstacle is standing, i.e., the cross-section of $S$
\[
	S(t) = \left\{ x \in \mathbb{R}^n \bigm| (x,t) \in S \right\}
\]
is independent of time.
 In this case, we write $S(t)=\Sigma\subset\mathbb{R}^n$.
\begin{definition} \label{Dlev}
Let $\Sigma$ and $\Gamma_0$ be closed sets in $\mathbb{R}^n$.
 Assume that \eqref{EGEN}  is orientation-free and its level-set equation \eqref{EF1} satisfies (F1) -- (F3).
 We say that a family of closed sets $\{\Gamma_t\}_{t\in(0,T)}$ is a \emph{level-set flow} of \eqref{EGEN} with obstacle $\Sigma$ and initial data $\Gamma_0$ if there exists a nonnegative viscosity solution $u\in BUC\left(\mathbb{R}^n\times[0,T)\right)$ of \eqref{EF1} in $\mathbb{R}^n\times(0,T)$ with upper obstacle $\psi^+\geq0$ such that
 \[
	\Gamma_t = \left\{ x \in \mathbb{R}^n \bigm| u(x,t) = 0 \right\}, \quad
	\Sigma = \left\{ x \in \mathbb{R}^n \bigm| \psi^+(x,t) = 0 \right\} 
	\quad\text{for}\quad t \in [0,T), 
\]
where $\psi^+$ is uniformly continuous in $\mathbb{R}^n\times[0,T]$.
\end{definition}
%
Since the mean curvature flow equation is orientation-free, Definition \ref{Dlev} is sufficient to handle a level-set mean curvature flow with obstacles.

Proposition \ref{Porf} is proved in \cite{M} at least for \eqref{EF1} with \eqref{EF2}.
 The basic strategy to prove both Propositions \ref{PUN} and \ref{Porf} is the same to the case without obstacles.
 Key ingredients are the invariance lemma (stated in the next subsection) which guarantees invariance under the change of dependent variables as well as comparison principle as in \cite{ES, CGG} at least $D_t$ and $E_t$ are bounded.
 The proof for unbounded case is more involved but it is still valid;
 see e.g.\ \cite[Theorem 4.2.11]{G}.
 If the equation \eqref{EGEN} is orientation-free, then $|u|$ is also a viscosity solution with obstacles $|\psi^+|$, $|\psi^-|$.
 Such observation was found in \cite{ES} for \eqref{EF1} with \eqref{EF2} with no obstacles;
 see \cite[Subsection 4.2.2]{G} for general orientation-free motion.

\subsection{Avoidance principle} \label{SS23}

As already pointed out in \cite{ES}, the Lipschitz preserving property (Theorem \ref{TLip}) implies the avoidance principle of two level-set flows stated below.
 Let $\operatorname{dist}(A,B)$ denote the distance of two sets $A$ and $B$, i.e.,
\[
	\operatorname{dist}(A,B) = \inf \left\{|x-y| \bigm| x \in A,\ y \in B \right\}.
\]
If $A$ is a singleton $\{x\}$, we simply write $\operatorname{dist}(x,B)$ instead of $\operatorname{dist}\left(\{x\},B\right)$.
\begin{cor} \label{Cav}
Assume the same hypothesis of Definition \ref{Dlev} concerning \eqref{EGEN}.
 For $T>0$,  let $\{\Gamma_t^i\}_{t\in(0,T)}$ be a level-set flow of \eqref{EGEN} with obstacle $\Sigma^i$ and initial data $\Gamma_0^i$, 
 where $\Sigma^i$ and $\Gamma_0^i$ with $\Sigma^i\subset\Gamma_0^i$ are closed sets in $\mathbb{R}^n$ for $i=1,2$.
 Then
\[
	\operatorname{dist}(\Gamma_t^1, \Gamma_t^2) \geq
	\operatorname{dist}(\Gamma_0^1, \Gamma_0^2)
	\quad\text{for}\quad t \in (0,T)
\]
provided that $\Sigma^2=\emptyset$ and $\inf\left\{\operatorname{dist}(\Gamma_t^2,\Sigma^1)\bigm|t\in(0,T)\right\}>0$.
\end{cor}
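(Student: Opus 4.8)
The plan is to reduce the avoidance principle to the Lipschitz preserving property (Theorem~\ref{TLip}) via a standard distance-function argument, taking care of the obstacle only for $\Gamma^1$. Set $d_0 := \operatorname{dist}(\Gamma_0^1,\Gamma_0^2)$; if $d_0 = 0$ there is nothing to prove, so assume $d_0 > 0$. Let $u^1, u^2 \in BUC(\mathbb{R}^n \times [0,T))$ be nonnegative viscosity solutions representing the two flows: $u^1$ solves \eqref{EF1} with upper obstacle $\psi^+ \geq 0$ whose zero set is $\Sigma^1$, while $u^2$ solves \eqref{EF1} with no obstacle (since $\Sigma^2 = \emptyset$). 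By Proposition~\ref{Porf} we are free to choose the initial data, so I would take $u_0^1(x) = \operatorname{dist}(x,\Gamma_0^1) \wedge \delta_0$ and $u_0^2(x) = \operatorname{dist}(x,\Gamma_0^2) \wedge \delta_0$ for a small fixed $\delta_0 > 0$; these are $1$-Lipschitz, hence by Theorem~\ref{TLip}(i) both $u^1(\cdot,t)$ and $u^2(\cdot,t)$ are $1$-Lipschitz in space for every $t$.

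Next I would run the translation-comparison trick that underlies Theorem~\ref{TLip}. Fix $z \in \mathbb{R}^n$ with $|z| < d_0$. Because $\operatorname{dist}(\cdot,\Gamma_0^1)$ is $1$-Lipschitz, $u_0^2(x) = 0$ forces $x \in \Gamma_0^2$, hence $\operatorname{dist}(x,\Gamma_0^1) \geq d_0 > |z|$, so $u_0^1(x-z) \geq |z|$ whenever $u_0^2(x) = 0$; more generally a short computation gives $u_0^2(x) \leq u_0^1(x-z) + |z| \cdot(\text{something})$... rather, the clean statement I want is simply $\{u_0^2 = 0\}$ and the translate $\{u_0^1(\cdot - z) = 0\} = \Gamma_0^1 + z$ are disjoint, and in fact $\operatorname{dist}(\Gamma_0^2, \Gamma_0^1 + z) \geq d_0 - |z| > 0$. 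The key hypothesis $\inf_t \operatorname{dist}(\Gamma_t^2, \Sigma^1) > 0$ enters here: it guarantees that the translated obstacle $\Sigma^1 + z$ stays away from $\Gamma_t^2$ for $|z|$ small, which is exactly what is needed so that the obstacle attached to $u^1(\cdot - z, \cdot)$ never interferes with the zero set of $u^2$. Concretely, I would show that $w(x,t) := u^1(x-z,t)$ is a nonnegative viscosity solution of the same obstacle problem with obstacle $\psi^+(\cdot - z,\cdot)$, whose zero set is $\Sigma^1 + z$, and whose initial zero set is $\Gamma_0^1 + z$.

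Then I would invoke the comparison structure. The standard conclusion from Propositions~\ref{PUN}/\ref{Porf} and the comparison principle is: if two level-set flows have disjoint initial zero sets, and if the obstacle of one stays disjoint from the other's evolving zero set, then their zero sets stay disjoint for all $t \in (0,T)$. Applying this to the flow of $u^2$ (no obstacle) and the translated flow $w$ (obstacle $\Sigma^1 + z$, initial set $\Gamma_0^1 + z$), I conclude $\Gamma_t^2 \cap (\Gamma_t^1 + z) = \emptyset$ for all $t \in (0,T)$ and all $z$ with $|z| < d_0$ small enough that the obstacle-separation condition holds. This says precisely that no point of $\Gamma_t^1$ lies within distance $|z|$ of $\Gamma_t^2$, for every admissible $|z|$; letting $|z| \uparrow d_0$ yields $\operatorname{dist}(\Gamma_t^1, \Gamma_t^2) \geq d_0$.

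The main obstacle I anticipate is handling the obstacle term correctly in the translated comparison: unlike the obstacle-free avoidance principle of \cite{ES}, here $u^1$ carries the constraint $u^1 \leq \psi^+$, so I cannot directly compare $u^1(\cdot - z, \cdot)$ with $u^2$ as sub/supersolutions of a single obstacle problem. The resolution is to work at the level of zero sets (Proposition~\ref{PUN}) rather than pointwise: one shows the open set $\{u^2 > 0\}$ and the closed set $\{w = 0\} = \Gamma_t^1 + z$ remain separated by propagating the containment $\Gamma_t^1 + z \subset \{u^2 > 0\}$, using that at any first contact time a comparison between $w$ and $u^2$ near the contact point is legitimate because there $w > \psi^+(\cdot - z,\cdot)$ is false but the contact point is at distance $>0$ from $\Sigma^1 + z$ by the standing hypothesis, so $w$ is genuinely a (local, obstacle-inactive) supersolution there. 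Making this localization rigorous — i.e., that the obstacle is inactive in a neighborhood of the contact point precisely because $\inf_t \operatorname{dist}(\Gamma_t^2, \Sigma^1) > 0$ — is the delicate point; everything else is the routine translation-invariance and comparison machinery already set up in Section~\ref{S2}.
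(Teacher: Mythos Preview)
Your approach differs substantially from the paper's. The paper encodes \emph{both} flows as level sets of a \emph{single} solution: it builds one $M$-Lipschitz initial function $u_0$ (for a parameter $M>1$) with $\{u_0=0\}=\Gamma_0^1$ and $\{u_0=d\}=\Gamma_0^2$ where $d=d_0$, takes the obstacle $\psi^+(x)=M\operatorname{dist}(x,\Sigma^1)$, and lets $u$ solve the obstacle problem. Then $\Gamma_t^1=\{u(\cdot,t)=0\}$, and the hypothesis $\inf_t\operatorname{dist}(\Gamma_t^2,\Sigma^1)>0$ (normalized to $\ge 1$ by dilation) guarantees the obstacle is inactive near the $d$-level, so $\Gamma_t^2=\{u(\cdot,t)=d\}$. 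Theorem~\ref{TLip} then gives $d\le M|x-y|$ for $x\in\Gamma_t^2$, $y\in\Gamma_t^1$, hence $\operatorname{dist}(\Gamma_t^1,\Gamma_t^2)\ge d/M$; sending $M\downarrow 1$ finishes. The Lipschitz property is used \emph{directly}, not via a translation-and-first-contact argument.

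Your route has a genuine gap. You need $(\Gamma_t^1+z)\cap\Gamma_t^2=\emptyset$ for every $|z|<d_0$, but your ``obstacle-separation condition'' --- that $\Sigma^1+z$ stays away from $\Gamma_t^2$ --- is only guaranteed for $|z|<\rho:=\inf_t\operatorname{dist}(\Gamma_t^2,\Sigma^1)$, since $\operatorname{dist}(\Gamma_t^2,\Sigma^1+z)\ge\rho-|z|$. The hypothesis gives $\rho>0$ but says nothing about $\rho\ge d_0$; a priori $\rho$ may be much smaller than $d_0$. So you cannot ``let $|z|\uparrow d_0$'' as you propose: your argument as written only yields $\operatorname{dist}(\Gamma_t^1,\Gamma_t^2)\ge\rho$. (Of course a posteriori the conclusion forces $\rho\ge d_0$, but that is circular.) One might try to repair this by a bootstrap in time --- on any interval where the conclusion holds one gets $\operatorname{dist}(\Gamma_s^2,\Sigma^1)\ge d_0$, and then argue no first contact can occur --- but making a first-contact argument rigorous for viscosity level-set flows is itself delicate, and your proposal leaves both this and the ``localization'' step unresolved. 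The paper's single-solution device sidesteps all of these issues.
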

\begin{proof}
We may assume that $\operatorname{dist}(\Gamma_t^2,\Sigma^1)\geq1$ by dilation.
 We set
\begin{equation*}
	u_0(x)= \left \{
\begin{array}{ll}
	\displaystyle M\operatorname{dist}(x,\Gamma_0^1) \wedge \frac{d}{2}
	&\text{if}\quad \displaystyle\operatorname{dist}(x,\Gamma_0^2) \geq \frac{d}{2}, \vspace{0.4em} \\
	\displaystyle\left(d-\operatorname{dist}(x,\Gamma_0^2)\right)\vee \frac{d}{2}
	&\text{if}\quad \displaystyle\operatorname{dist}(x,\Gamma_0^2) < \frac{d}{2}, 
\end{array}
\right.
\end{equation*}
%
where $d=\operatorname{dist}(\Gamma_0^1,\Gamma_0^2)$ and $M>1$; see Figure \ref{FCoIn}. 
\begin{figure}[htb]
\centering
\includegraphics[width=5.5cm]{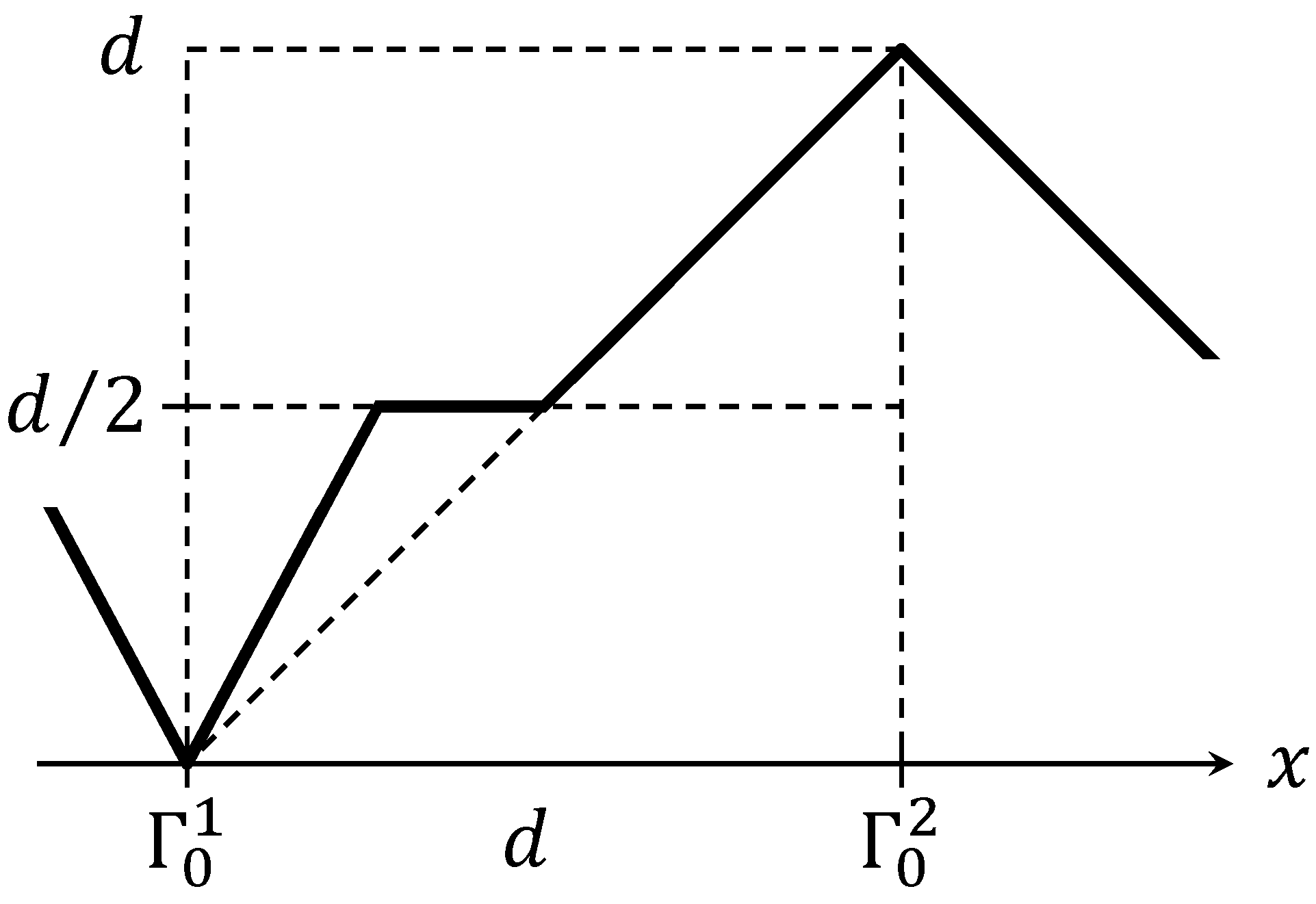}
\caption{graph of $u_0$} \label{FCoIn}
\end{figure}

 By definition, $u_0(x)$ is continuous on the set 
 \[
	\left\{x\in\mathbb{R}^n \bigm| \operatorname{dist}(x,\Gamma_0^2)=\frac{d}{2} \right\}.
\]
 Thus, $u_0$ is Lipschitz on $\mathbb{R}^n$ with constant $M$.
 By definition,
\[
	u_0(x) \leq M \operatorname{dist}(x,\Sigma^1) =: \psi^+(x).
\]
Let $u$ be the viscosity solution of \eqref{EF1} with obstacle $\psi^+$ and initial data $u_0$.
 By definition,
\[
	\Gamma_t^1 = \left\{x \in\mathbb{R}^n \bigm| u(x,t) = 0 \right\}.
\]
We have assumed that $ \operatorname{dist}(\Gamma_t^2,\Sigma^1)\geq1$ for $t\in(0,T)$.
 Thus, 
\[
	\Gamma_t^2 \cap \left\{ x \in \mathbb{R}^n \bigm|
	M \operatorname{dist}(x,\Sigma^1) \leq 1 \right\} = \emptyset.
\]
Since the equation is orientation free, $d-u$ solves \eqref{EF1} without obstacle, and
\[
	\Gamma_t^2 = \left\{x \in\mathbb{R}^n \bigm| u(x,t) = d \right\}.
\]
Since $u(\cdot,t)$ is spatially Lipschitz with constant $	M$ by Theorem \ref{TLip}, this implies that $\operatorname{dist}(\Gamma_t^2,\Gamma_t^1)\geq d/M$.
 Sending $M\downarrow1$, the proof of Corollary \ref{Cav} is now complete.
\end{proof}

As an application, one is able to construct an example of neck-pinching by using Angenent's self-similar shrinking  torus (doughnut) as observed in \cite{An};
 note that the shrinking doughnut does not touch $\Gamma_t$ by Corollary \ref{Cav}.
 See Figure \ref{Fsingfor}.

In Corollary \ref{Cav}, the assumption $\Sigma^2=\emptyset$ is necessary.
 See Figure \ref{FColl}.
\begin{figure}[htb]
\centering
	\begin{minipage}[t]{0.45\linewidth}
\centering
\includegraphics[width=7cm]{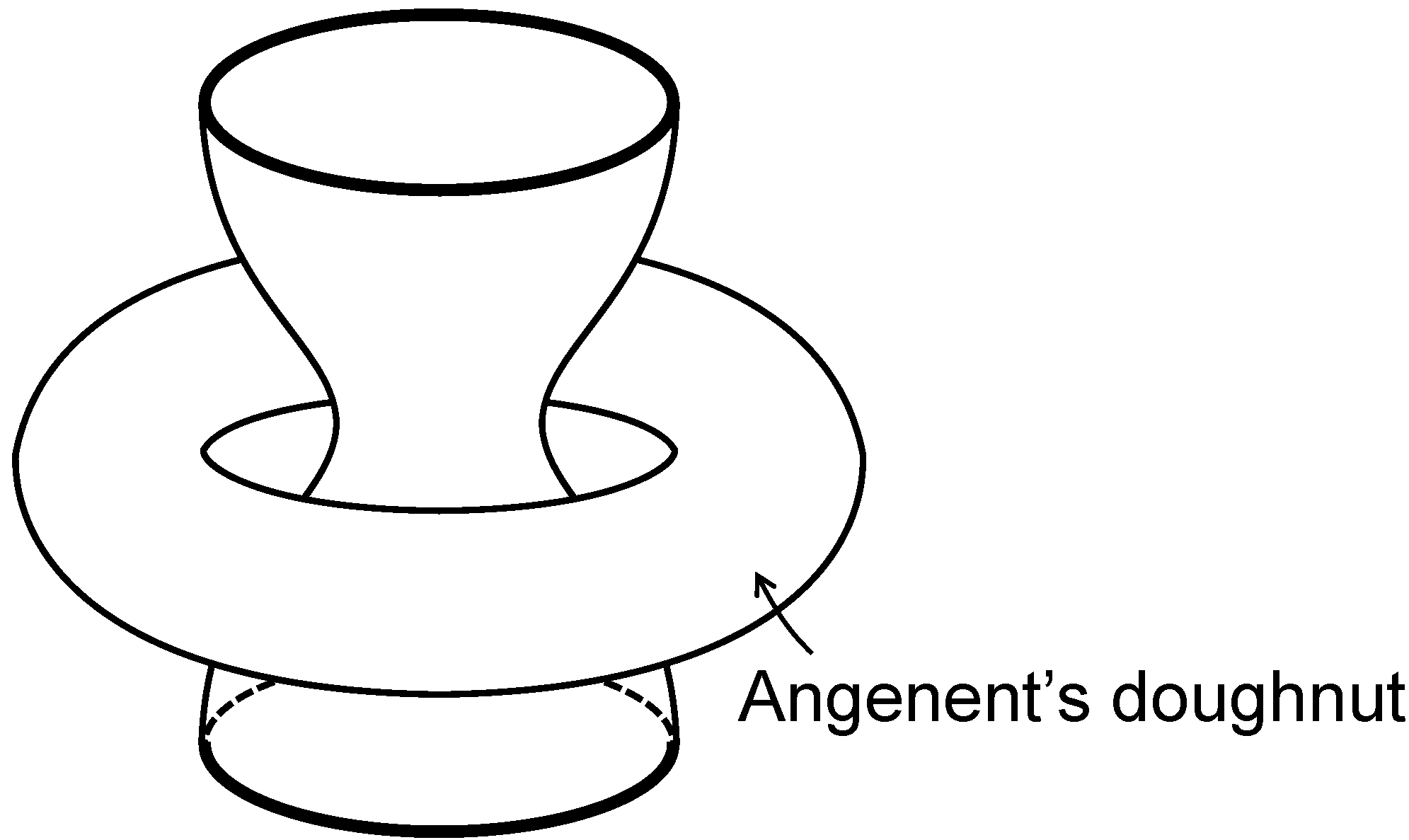}
\caption{example of pinching} \label{Fsingfor}
	\end{minipage}
	\begin{minipage}[t]{0.40\linewidth}
\centering
\includegraphics[width=5.5cm]{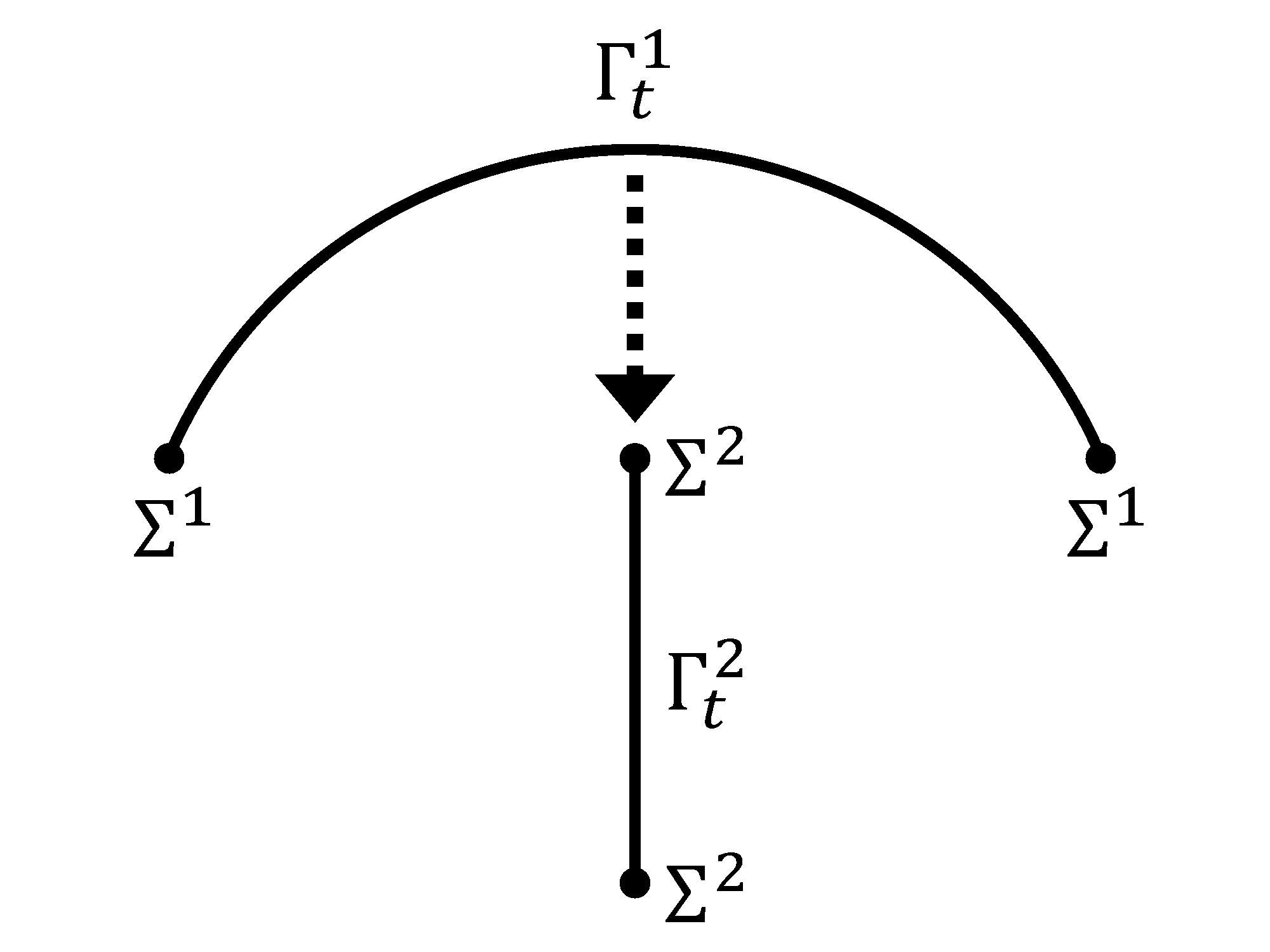}
\caption{$\Gamma_t^1$ hits the upper part of $\Sigma^2$ in finite time.} \label{FColl}
	\end{minipage}
\end{figure}
 The assumption $\inf\left\{\operatorname{dist}(\Gamma_t^2,\Sigma^1)\bigm|t\in(0,T)\right\}>0$ is also necessary as Figure \ref{FColl2} shows.
\begin{figure}[htb]
\centering
\includegraphics[width=5cm]{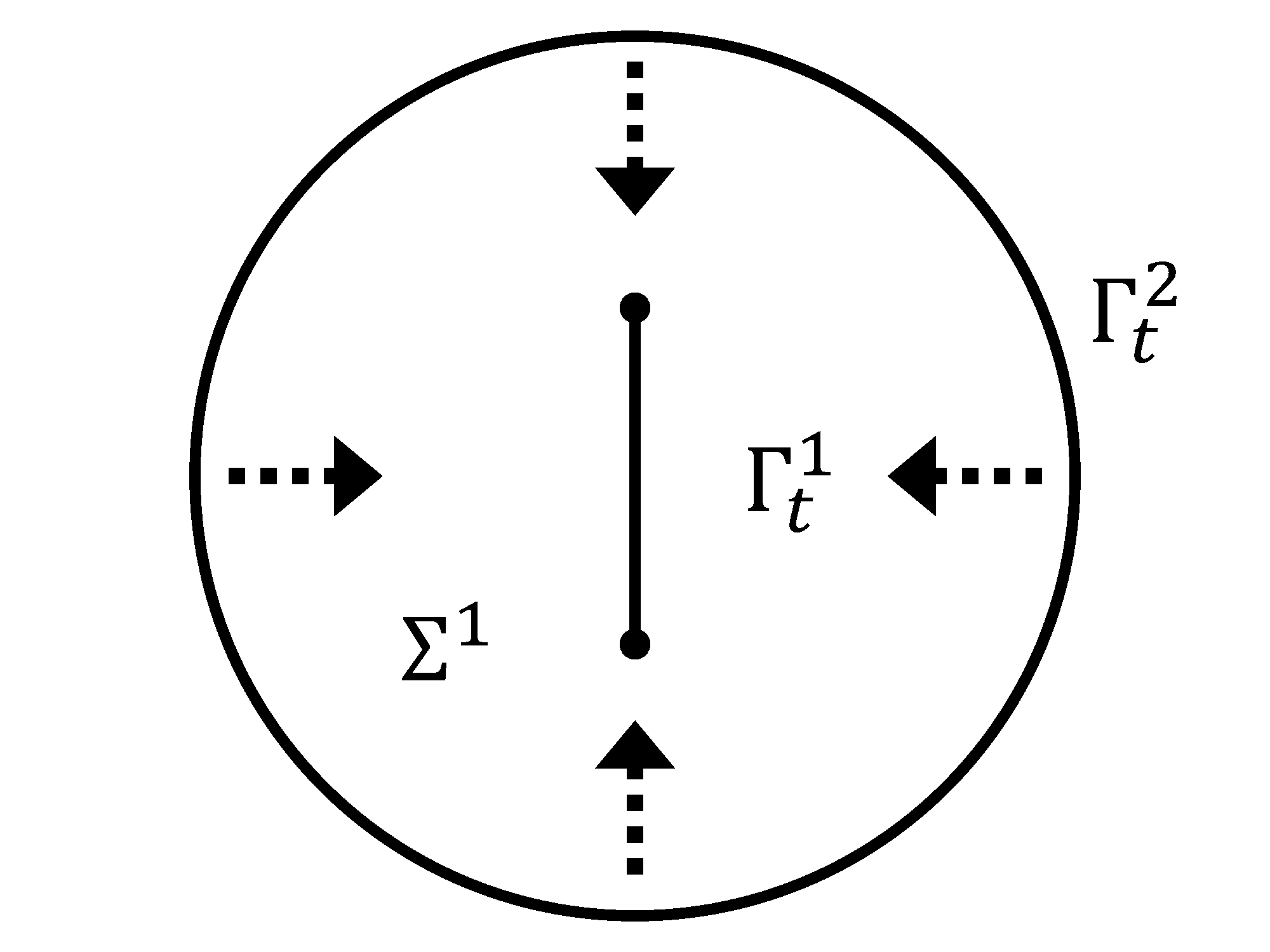}
\caption{$\Gamma_t^2$ hits $\Sigma^1$} \label{FColl2}
\end{figure}
\begin{remark} \label{Rfat}
As already indicated, there are instant fattening phenomena.
 Such an example is given in \cite{M}.
 We recall his example and see the behavior right after the fattening.
 Let $\Gamma_0$ be a unit circle (centered at the origin) and $\Sigma$ consists of three different points on $\Gamma_0$.
 (In \cite{M} $\Sigma$ forms an equilateral triangle, but it is unnecessary.)
 Then $\Gamma_t$ has an interior instantaneously and its outer boundary consists of the three curvature flow with the Dirichlet boundary condition.
 Its inner boundary is the circle of radius $\sqrt{1-2t}$ centered at the origin.
 See Figure \ref{Ffat}.
\begin{figure}[htb]
\centering
\includegraphics[width=9cm]{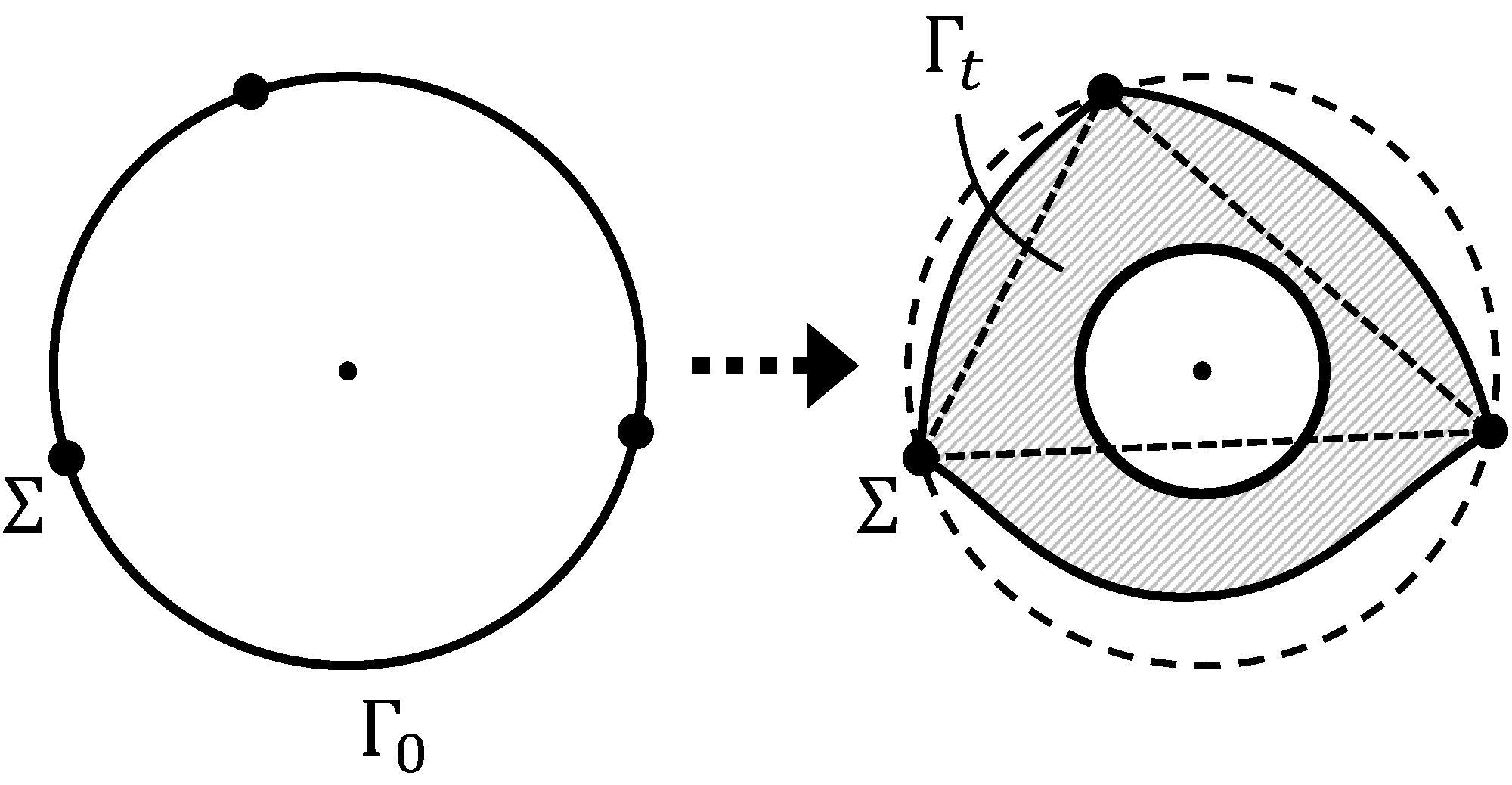}
\caption{fattening} \label{Ffat}
\end{figure}
Note that as in the case without obstacle \cite{G}, it is possible to prove that $t\longmapsto\Gamma_t$ is left continuous in the Hausdorff distance sense, and this fattening example of course does not violate this continuity despite $\Gamma_t$ has an interior. 
\end{remark}
%

\section{Dirichlet problems versus obstacle problems} \label{S3} 

A level-set method for the Dirichlet problem has been established only when the initial surface $\Gamma_0$ is contained in a strictly mean-convex domain \cite{SZ}.
 We shall compare a level-set mean curvature flow with an obstacle to their solution.
In other words, we shall prove Theorem \ref{Tcsz}. 
 We first recall their main theorems.
\begin{prop}[{\cite[Theorems 2.5, 2.6]{SZ}}] \label{PSZ1}
Let $U$ be a bounded $C^2$ domain in $\mathbb{R}^n$.
 Assume that $\partial U$ is strictly mean-convex.
 Let $v_0\in C(\overline{U})$ and $g\in C(\partial U)$ with $v_0=g$ on $\partial U$.
 Then there is a unique viscosity solution $v\in C\left(\overline{U}\times[0,\infty)\right)$ to the level-set equation \eqref{Elev} in $U$ with $v=g$ on $\partial U$ and $\left.v\right|_{t=0}=v_0$.
\end{prop}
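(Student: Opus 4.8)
The statement is \cite[Theorems~2.5 and~2.6]{SZ}, so one option is simply to invoke it; for completeness I indicate how it fits the framework of Section~\ref{S2}. The plan is the usual Perron-method-plus-comparison scheme, now carried out in the bounded domain $U$ with a Dirichlet condition, the one geometric ingredient beyond Section~\ref{S2} being barriers at $\partial U$ built from the distance function, and this is exactly where strict mean-convexity is used.

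First I would record the comparison principle for \eqref{Elev} in $U\times(0,T)$ with boundary and initial data prescribed in the (viscosity) sense and continuous: if $u$ is a subsolution, $w$ a supersolution, and $u\le w$ on $(\partial U\times[0,T))\cup(\overline U\times\{0\})$, then $u\le w$ in $\overline U\times[0,T)$. This is the bounded-domain, semicontinuous version announced right after Proposition~\ref{Pcom}; its proof is the doubling-of-variables argument of \cite[Theorem~8.2]{CIL} together with the device of \cite[Theorem~4.1]{GGIS} (see also \cite[Theorem~3.1.4]{G}) needed to handle the singularity of $F$ in \eqref{EF2} at $\nabla u=0$, the only extra point being the routine observation that, once $u\le w$ on the parabolic boundary, the maximum of $u-w$ cannot be attained there. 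Uniqueness of $v$ is then immediate.

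Next, the barriers. Fix $x_0\in\partial U$ and let $d(x):=\operatorname{dist}(x,\partial U)$ for $x\in\overline U$; since $\partial U$ is $C^2$, $d\in C^2$ on a one-sided neighbourhood $N$ of $\partial U$ with $|\nabla d|=1$ there, whence, using $\nabla^2 d\,\nabla d=\tfrac12\nabla|\nabla d|^2=0$ and \eqref{EF2},
\[
	F(\nabla d,\nabla^2 d)=-\operatorname{trace}\!\bigl((I-\nabla d\otimes\nabla d)\,\nabla^2 d\bigr)=-\Delta d.
\]
On $\partial U$ the right-hand side is the inward mean curvature of $\partial U$, which is strictly positive by hypothesis, so after shrinking $N$ we have $F(\nabla d,\nabla^2 d)\ge 2c_0>0$ on $N$; by the positive $1$-homogeneity of $F$, $Kd$ is then a strict stationary supersolution of \eqref{Elev} on $N$ for every $K>0$, and $-Kd$ a strict stationary subsolution. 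Choosing $\rho>0$ with $B_\rho(x_0)\cap\overline U\subset N$, a modulus $\omega$ dominating the moduli of continuity of both $g$ and $v_0$, and $K,\Lambda$ large (and inserting, as in the proof of Theorem~\ref{TLip}, an initial-layer term $\Lambda t\,\delta^{-1/2}$ to control continuity at $t=0$), I would take
\[
	w^{\pm}(x,t):=g(x_0)\pm\Bigl(\omega\bigl((\delta+|x-x_0|^2)^{1/2}\bigr)+Kd(x)+\Lambda t\,\delta^{-1/2}\Bigr),\qquad \delta\in(0,1),
\]
and check that on $(B_\rho(x_0)\cap\overline U)\times[0,T)$ these are a super- and a subsolution, that $w^-\le v_0\le w^+$ at $t=0$, that $w^-\le g\le w^+$ on $B_\rho(x_0)\cap\partial U$, and that $w^-\le-\|v_0\|_\infty\le\|v_0\|_\infty\le w^+$ on the remaining lateral boundary once $\omega(\rho)$ is large enough. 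With the comparison principle available, Perron's method (sandwiching between the constants $\pm\|v_0\|_\infty$ plus an initial-layer barrier) produces a viscosity solution $v$ of \eqref{Elev} in $U\times(0,\infty)$; comparing $v$ with $w^{\pm}$ near each $(x_0,t)$ and letting $\delta\downarrow0$ forces $v_*(x_0,t)=v^*(x_0,t)=g(x_0)$, so $v\in C(\overline U\times[0,\infty))$ and the Dirichlet condition is attained \emph{classically}, not merely in the viscosity sense; the same barriers at $t=0$ give $v(\cdot,0)=v_0$, and uniqueness comes from the comparison principle.

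The delicate point is the barrier step: one must verify that $w^{\pm}$ are genuinely super/subsolutions of the \emph{singular} equation \eqref{Elev} throughout the small neighbourhood — including wherever $\nabla w^{\pm}$ happens to vanish — and that $K$, $\rho$, $\Lambda$ and $\omega$ can be chosen consistently and uniformly in $x_0\in\partial U$. This is precisely where strict mean-convexity is indispensable: without a strict sign for $-\Delta d$ the distance function is only a non-strict supersolution and cannot absorb the error terms generated by $\omega$, by $\nabla^2 d$ away from $\partial U$, and by the initial layer. Once the barriers are in place, the remainder is standard viscosity-solution bookkeeping.
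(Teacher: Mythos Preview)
The paper does not prove Proposition~\ref{PSZ1}: it is stated as a direct citation of \cite[Theorems~2.5, 2.6]{SZ} and is used as a black box in the proof of Theorem~\ref{Tcsz}. Your opening sentence already acknowledges this, and simply invoking the reference is exactly what the paper does.

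Your supplementary sketch is the standard Perron-plus-barriers argument and is essentially the route taken in \cite{SZ}; it is reasonable, but since the paper offers no proof there is nothing to compare against. One technical remark if you want to tighten the sketch: as written, $\omega$ is only a modulus, so $w^\pm$ need not be $C^2$ and you cannot plug them directly into $F$; you would either replace $\omega$ by a smooth majorant or, as in \cite{SZ}, use a two-parameter family $g(x_0)\pm(\varepsilon+K_\varepsilon d(x))$ with $K_\varepsilon$ chosen so that the barrier dominates on the lateral piece $\partial B_\rho(x_0)\cap U$. Either fix is routine, and the key geometric point---that strict mean-convexity makes $Kd$ a strict stationary supersolution near $\partial U$ for all $K>0$, allowing the boundary condition to be attained classically---is correctly identified.
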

\begin{prop}[{\cite[Remark 2.8]{SZ}}] \label{PSZ2}
The set $\Gamma_t^U=\left\{x\in\mathbb{R}^n \bigm| v(x,t)=0 \right\}$ depends only on $\Gamma_0=\left\{x\in\overline{U} \bigm| v_0(x)=0 \right\}$.
\end{prop}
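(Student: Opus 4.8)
The plan is to run the classical level-set uniqueness argument of \cite{ES} (see also \cite{G}), carried out inside the bounded domain $U$ and respecting the fact that the Sternberg--Ziemer solutions attain their Dirichlet data classically. Fix two admissible data pairs $(v_0^1,g^1)$ and $(v_0^2,g^2)$ (so that $g^i=v_0^i|_{\partial U}$) with common zero set $\{v_0^1=0\}=\{v_0^2=0\}=\Gamma_0$ in $\overline U$, and let $v^1,v^2\in C(\overline U\times[0,\infty))$ be the solutions furnished by Proposition \ref{PSZ1}; the goal is $\{v^1(\cdot,t)=0\}=\{v^2(\cdot,t)=0\}$ for every $t\ge0$. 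As a first reduction, observe that $F$ in \eqref{EF2} is orientation-free, i.e.\ satisfies \eqref{Eorf}, so that $|v^i|$ is again a viscosity solution of \eqref{Elev} in $U$ (the observation of \cite{ES}; see \cite[Subsection 4.2.2]{G}); since $|v^i|\in C(\overline U\times[0,\infty))$ attains $|g^i|$ classically on $\partial U$, Proposition \ref{PSZ1} identifies $|v^i|$ as the solution with data $(|v_0^i|,|g^i|)$, its zero set coincides with that of $v^i$ at each time, and $\{|v_0^i|=0\}=\Gamma_0$. Replacing $v^i$ by $|v^i|$ we may therefore assume $v^i\ge0$, $v_0^i\ge0$, $g^i\ge0$; and, by the symmetry of the two indices, it suffices to prove the single inclusion $\{v^1(\cdot,t)=0\}\subseteq\{v^2(\cdot,t)=0\}$.

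The key step is a relabeling estimate. Since $v_0^1$ and $v_0^2$ are continuous on the compact set $\overline U$ and vanish exactly on $\Gamma_0$, the modulus
\[
\mu(s):=\inf\left\{ v_0^1(x) \bigm| x\in\overline U,\ v_0^2(x)\ge s \right\},\qquad s>0,
\]
is nondecreasing and strictly positive, and $\mu(s)\to0$ as $s\downarrow0$ --- this last point uses the compactness of $\overline U$ together with the equality of the two zero sets (the degenerate case in which $\Gamma_0$ is relatively clopen in $\overline U$ is handled directly). Hence there is a continuous nondecreasing $f\colon[0,\infty)\to[0,\infty)$ with $f(0)=0$, $f(s)>0$ for $s>0$, and $f\le\mu$ on $(0,\infty)$, and then $f\circ v_0^2\le v_0^1$ on $\overline U$, whence also $f\circ g^2\le g^1$ on $\partial U$. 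Because \eqref{Elev} is geometric (its $F$ satisfies \eqref{Ege}), the invariance lemma shows that $f\circ v^2$ is once more a viscosity solution of \eqref{Elev} in $U$, it attains $f\circ g^2$ classically on $\partial U$, and it equals $f\circ v_0^2$ at $t=0$; in particular $f\circ v^2\le v^1$ on the parabolic boundary of $U\times(0,\infty)$. The comparison principle for \eqref{Elev} in the bounded domain $U$ (part of the Sternberg--Ziemer theory \cite{SZ}; cf.\ the bounded-domain comparison principle of Section \ref{S3}) then gives $f\circ v^2\le v^1$ on all of $\overline U\times[0,\infty)$. Since $v^2\ge0$ and $f(s)>0$ exactly for $s>0$, this forces $\{v^2(\cdot,t)>0\}\subseteq\{v^1(\cdot,t)>0\}$, equivalently $\{v^1(\cdot,t)=0\}\subseteq\{v^2(\cdot,t)=0\}$. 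Interchanging the two indices gives the reverse inclusion, so $\{v^1(\cdot,t)=0\}=\{v^2(\cdot,t)=0\}$; since the data were arbitrary, $\Gamma_t^U$ indeed depends on $\Gamma_0$ alone.

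I expect the relabeling step to be the only genuine obstacle: checking $\mu(s)\to0$ as $s\downarrow0$ and producing the dominating function $f$ is exactly where the continuity of $v_0^1,v_0^2$ on the compact set $\overline U$ and the equality of their zero sets are used, and the degenerate clopen case must be inspected by hand. A secondary point requiring care is that both the orientation-free reduction and the invariance lemma have to be applied up to $\partial U$, so that $|v^i|$ and $f\circ v^i$ still attain their (reflected, relabeled) boundary data in the classical sense; this is immediate from the continuity of $s\mapsto|s|$ and of $f$, but it is precisely what keeps Proposition \ref{PSZ1} and the bounded-domain comparison principle available at each stage.
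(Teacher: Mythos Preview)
The paper does not supply its own proof of Proposition~\ref{PSZ2}; the statement is quoted directly from \cite[Remark 2.8]{SZ} and left unargued. Your proposal is the standard level-set uniqueness argument and is correct; moreover it is precisely the machinery the paper assembles elsewhere in Section~\ref{S3}: the orientation-free reduction is the remark following \eqref{Eorf}, the relabeling step is Lemma~\ref{Lnor} (applied with $K=\overline U$, $g_1=v_0^2$, $g_2=v_0^1$), the passage from $v^2$ to $f\circ v^2$ is Lemma~\ref{Linv}, and the final inequality comes from Lemma~\ref{Lcompa} on the bounded domain $U$. So your approach matches both the spirit of \cite{SZ} and the toolkit this paper provides.

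One minor simplification: you do not actually need $\mu(s)\to0$ as $s\downarrow0$ to manufacture the dominating function $f$. Since $\mu$ is nondecreasing and strictly positive on $(0,\infty)$, one can always build a continuous $f$ with $f(0)=0$, $f>0$ on $(0,\infty)$ and $f\le\mu$ (for instance piecewise-linearly through suitably shrunk values of $\mu$ at the nodes $1/n$). This makes the separate treatment of the clopen case unnecessary, though your handling of it is of course fine.
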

We shall compare our level-set flow with obstacle to this flow.
 For the proof of Theorem \ref{Tcsz}, we recall following general principles which are well known for the level-set equations without obstacles; see e.g.\ \cite{G}.
 However, it is not difficult to extend such results to obstacle problems.
\begin{lemma}[Invariance lemma] \label{Linv}
Assume that (F1), (F2) and \eqref{Ege}.
 Let $D$ be a domain in $\mathbb{R}^n$ and $T>0$.
 Let $\theta$ be a continuous, nondecreasing function in $\mathbb{R}$.
 If $u$ is a viscosity subsolution {\rm(}resp., \ supersolution{\rm)} of the level-set equation \eqref{EF1} of \eqref{EGEN} in $D\times(0,T)$ with obstacles $\psi^\pm$, then $(\theta\circ u)(x,t)=\theta\left(u(x,t)\right)$ is a viscosity subsolution {\rm(}resp., supersolution{\rm)} with obstacles $\theta\circ\psi^\pm$.
 If the equation \eqref{EGEN} is orientation-free, the monotonicity of $\theta$ is unnecessary.
 In particular, $\theta(\sigma)=|\sigma|$ is allowed.
\end{lemma}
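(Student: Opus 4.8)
The plan is to verify directly from the definition of viscosity sub/supersolution for obstacle problems (Definition \ref{Dgol}) that $\theta\circ u$ inherits the one-sided inequalities, exploiting the geometric scaling \eqref{Ege} of $F$. I will only write out the subsolution case, since the supersolution case is symmetric, and the solution case follows by combining the two. The three things to check are: (a) the semicontinuity/finiteness condition, which is immediate because $\theta$ is continuous and $u^*$ is bounded on $\overline Q$ when the obstacles are bounded (and in general $(\theta\circ u)^* = \theta\circ u^*$ since $\theta$ is continuous and nondecreasing); (b) the obstacle inequalities $\theta\circ\psi^- \le \theta\circ u^* \le \theta\circ\psi^+$, which follow from $\psi^- \le u^* \le \psi^+$ and monotonicity of $\theta$; and (c) the differential inequality at an interior touching point where the lower-obstacle strict inequality holds.

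For (c), suppose $\varphi\in C^2(Q)$ touches $\theta\circ u$ from above at $(\hat x,\hat t)\in Q$, i.e. $\max_Q((\theta\circ u)^* - \varphi)$ is attained there, and suppose $(\theta\circ u)^*(\hat x,\hat t) > (\theta\circ\psi^-)(\hat x,\hat t)$. The standard trick is to reduce to the case where $\theta$ is $C^1$ with $\theta' > 0$ near the relevant value by approximation: choose a sequence $\theta_\varepsilon$ of smooth strictly increasing functions converging locally uniformly to $\theta$, prove the claim for each $\theta_\varepsilon$, and pass to the limit using stability of viscosity solutions. For smooth strictly increasing $\theta$, one has $u^* = \theta^{-1}\circ(\theta\circ u)^*$, so $\psi := \theta^{-1}\circ\varphi$ is an admissible $C^2$ test function touching $u^*$ from above at $(\hat x,\hat t)$, and the lower-obstacle condition transfers: $u^*(\hat x,\hat t) > \psi^-(\hat x,\hat t)$. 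Applying the subsolution property of $u$ gives $\psi_t + F_*(\nabla\psi,\nabla^2\psi)\le 0$ at $(\hat x,\hat t)$. Now one computes $\nabla\varphi = \theta'(u^*)\,\nabla\psi$, $\varphi_t = \theta'(u^*)\,\psi_t$, and $\nabla^2\varphi = \theta'(u^*)\,\nabla^2\psi + \theta''(u^*)\,\nabla\psi\otimes\nabla\psi$; substituting into the geometric scaling identity \eqref{Ege} with $\lambda = \theta'(u^*) > 0$ and $\sigma = \theta''(u^*)/\theta'(u^*)$ yields $F_*(\nabla\varphi,\nabla^2\varphi) = \theta'(u^*)\,F_*(\nabla\psi,\nabla^2\psi)$ (using that $F_*$ also satisfies \eqref{Ege}, which follows from (F1)), hence $\varphi_t + F_*(\nabla\varphi,\nabla^2\varphi) = \theta'(u^*)\big(\psi_t + F_*(\nabla\psi,\nabla^2\psi)\big)\le 0$. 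This is exactly the required inequality for $\theta\circ u$.

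The main obstacle is the degenerate case $\nabla\varphi(\hat x,\hat t) = 0$, together with points where $\theta$ may fail to be differentiable or $\theta'$ vanishes — this is precisely why one cannot naively divide by $\theta'$ and must argue by approximation. When $\nabla\psi(\hat x,\hat t) = 0$, the scaling identity \eqref{Ege} is used in the form that $F^*(0,O) = F_*(0,O)$ by (F3), so the value of $F_*$ at a zero-gradient test point is the fixed constant $F(0,O)$, and one checks that $\nabla\varphi = 0$ there as well (since $\nabla\varphi = \theta'(u^*)\nabla\psi$ in the smooth case, or directly from the chain rule for one-sided derivatives in the general monotone case), so both differential inequalities reduce to the same constant and nothing needs to be done. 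For the orientation-free case where monotonicity of $\theta$ is dropped, one additionally uses \eqref{Eorf}: writing $\theta = \theta_1 - \theta_2$ with $\theta_i$ nondecreasing is not quite enough, so instead one notes that $|\sigma| = \sigma\vee(-\sigma)$ and that $-u$ is a viscosity supersolution with obstacles $-\psi^\mp$ when \eqref{Eorf} holds (as recalled after Proposition \ref{Porf}); combining with the monotone case applied to $\sigma\mapsto\sigma$ and the max operation (which preserves subsolutions) gives that $|u|$ is a subsolution with obstacles $|\psi^\pm|$ appropriately interpreted. I would cite \cite[Subsection 4.2.2]{G} and \cite{ES} for this last step rather than reproduce it, since it is standard for the no-obstacle case and the obstacle version is a routine adaptation given that the obstacle conditions are purely pointwise and manifestly compatible with applying $\theta$.
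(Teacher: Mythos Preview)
The paper does not give a proof of this lemma; right after the statement it writes ``See e.g.\ \cite[Theorem 4.2.1]{G}, \cite{CGG}, \cite{ES} without $\psi^+$ for the proof,'' treating the obstacle extension as routine. Your sketch follows exactly the standard argument in those references---approximate $\theta$ by smooth strictly increasing functions, pull the test function back through $\theta^{-1}$, and invoke the geometric scaling \eqref{Ege}---so in substance your approach coincides with what the paper is citing.

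Two small corrections. First, in the chain-rule step the parameter should be $\sigma = \theta''(u^*)$, not $\theta''(u^*)/\theta'(u^*)$: in \eqref{Ege} the rank-one term is $\sigma\,p\otimes p$ with $p=\nabla\psi$, and $\nabla^2\varphi = \theta'(\psi)\,\nabla^2\psi + \theta''(\psi)\,\nabla\psi\otimes\nabla\psi$ already matches that form without any division. This slip is harmless for the conclusion. Second, your orientation-free paragraph has the parity reversed: to obtain $|u| = u\vee(-u)$ as a \emph{sub}solution via ``max of subsolutions,'' you need $-u$ to be a subsolution, but under \eqref{Eorf} the implication is that $-u$ is a \emph{super}solution when $u$ is a subsolution (and vice versa). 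The $|\cdot|$ argument as you wrote it therefore really requires $u$ to be a solution---which is in fact how the lemma is applied everywhere later in the paper---while the full non-monotone statement for a one-sided sub/supersolution needs the finer level-set characterisation in \cite[Chapter~4]{G} rather than the max trick.
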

See e.g.\ \cite[Theorem 4.2.1]{G}, \cite{CGG}, \cite{ES} without $\psi^+$ for the proof. 
\begin{lemma}[Comparison principle] \label{Lcompa}
Assume that (F1) -- (F3).
 Let $D$ be a bounded domain in $\mathbb{R}^n$ and $T>0$.
 Let $u$ and $v$, respectively, be a viscosity sub- and supersolution of \eqref{EF1} in $D$ with upper obstacle $\psi^+\in C\left(\overline{D}\times[0,T)\right)$.
 Assume that $u^*\leq v_*$ on $(\partial D\times[0,T))\cup (D\times\{0\})$.
 Then $u^*\leq v_*$ in $\overline{D}\times[0,T)$.
\end{lemma}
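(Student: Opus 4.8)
The statement to prove is Lemma \ref{Lcompa}, the comparison principle for the obstacle problem in a bounded domain for semicontinuous sub/supersolutions.

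\medskip

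\noindent\textbf{Proof proposal.}

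The plan is to follow the standard doubling-of-variables technique of Crandall--Ishii--Lions \cite[Theorem 8.2]{CIL}, adapted to the upper-obstacle constraint exactly as in \cite{M} (and as in \cite[Theorem 3.1.4]{G} for the obstacle-free case), with the bounded domain $D$ playing the role that $\mathbb{R}^n$ plays in Proposition \ref{Pcom}. Write $U=u^*$ (USC, bounded above) and $W=v_*$ (LSC, bounded below); these are bounded on $\overline{D}\times[0,T)$ since $D$ is bounded and $\psi^+$ is continuous there. Suppose for contradiction that $\sup_{\overline{D}\times[0,T)}(U-W)=:2\theta_0>0$. The first step is the standard reduction: replace $U$ by $U-\varepsilon/(T-t)$ for small $\varepsilon>0$, which makes the supremum strictly positive still, forces the supremum to be attained at an interior time bounded away from $T$, and produces a strict supersolution inequality with a margin $\varepsilon/(T-t)^2>0$ to absorb error terms; this is routine and I would only sketch it.

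Next I would introduce the penalization $\Phi_{\alpha}(x,t,y,s)=U(x,t)-W(y,s)-\tfrac{\alpha}{2}|x-y|^2-\tfrac{\alpha}{2}|t-s|^2$ on $(\overline{D}\times[0,T))^2$ and let $(x_\alpha,t_\alpha,y_\alpha,s_\alpha)$ be a maximum point (existing by semicontinuity and compactness of $\overline{D}$). Standard arguments give $\alpha|x_\alpha-y_\alpha|^2,\alpha|t_\alpha-s_\alpha|^2\to 0$ and, along a subsequence, $(x_\alpha,t_\alpha),(y_\alpha,s_\alpha)\to(\hat x,\hat t)$ with $(U-W)(\hat x,\hat t)$ equal to the (shifted) positive supremum. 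Because the boundary datum hypothesis $U\le W$ holds on $(\partial D\times[0,T))\cup(D\times\{0\})$ and the supremum is strictly positive, $(\hat x,\hat t)$ must lie in the parabolic interior $D\times(0,T)$, hence so do $(x_\alpha,t_\alpha)$ and $(y_\alpha,s_\alpha)$ for $\alpha$ large; this is where boundedness of $D$ and the explicit boundary assumption are used, in place of the uniform-continuity-at-infinity argument of Proposition \ref{Pcom}.

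The obstacle-specific point comes now. At the maximum point we need the subsolution test to fire, i.e. we need $U(x_\alpha,t_\alpha)>\psi^+(x_\alpha,t_\alpha)$ is \emph{not} required for a subsolution of an upper-obstacle problem — rather, for the supersolution $W$ we must verify $W(y_\alpha,s_\alpha)<\psi^+(y_\alpha,s_\alpha)$ so that part (ii)(b) of Definition \ref{Dgol} applies. This is forced: since $U\le\psi^+$ everywhere (constraint (c) for the subsolution) and $U(x_\alpha,t_\alpha)-W(y_\alpha,s_\alpha)\ge 2\theta_0-o(1)>0$, we get $W(y_\alpha,s_\alpha)\le U(x_\alpha,t_\alpha)-\theta_0\le\psi^+(x_\alpha,t_\alpha)-\theta_0$, and continuity of $\psi^+$ together with $x_\alpha-y_\alpha\to0$ gives $W(y_\alpha,s_\alpha)<\psi^+(y_\alpha,s_\alpha)$ for $\alpha$ large. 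Thus the supersolution inequality is available; the subsolution inequality for $U$ needs only that $U>\psi^-=-\infty$, which is vacuous. With both viscosity inequalities in hand, apply the parabolic theorem on sums (Crandall--Ishii lemma, e.g. \cite[Theorem 8.3]{CIL}) to obtain parabolic jets $(\tau_1,p_\alpha,X_\alpha)$ for $U$ at $(x_\alpha,t_\alpha)$ and $(\tau_2,p_\alpha,Y_\alpha)$ for $W$ at $(y_\alpha,s_\alpha)$ with the common momentum $p_\alpha=\alpha(x_\alpha-y_\alpha)$, time-slopes satisfying $\tau_1-\tau_2=\alpha(t_\alpha-s_\alpha)\cdot 2 + \varepsilon/(T-t_\alpha)^2$-type relation, and matrix inequality $X_\alpha\le Y_\alpha$ (up to $O(\alpha|x_\alpha-y_\alpha|^2)$). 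Subtracting the two inequalities and invoking (F2) degenerate ellipticity to kill the second-order terms, then (F1), (F3) and the structure condition handling the possibly vanishing $p_\alpha$ (this is the usual subtlety, treated exactly as in \cite[Theorem 4.1]{GGIS} or \cite[Theorem 3.1.4]{G} — split into the cases $p_\alpha\ne 0$ and $p_\alpha=0$, using $F_*(0,O)=F^*(0,O)$ finite in the latter), we reach a contradiction with the strict-supersolution margin. Letting $\varepsilon\downarrow0$ yields $U\le W$ on $\overline{D}\times[0,T)$.

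\medskip

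\noindent The main obstacle I anticipate is purely technical and already well documented: controlling the term coming from the degenerate, singular-at-$p=0$ operator $F$ of \eqref{EF2} at the doubled point, i.e. making the case analysis at $p_\alpha=0$ rigorous so that the matrix inequality and (F3) together still force the contradiction; the obstacle constraint itself contributes essentially no difficulty beyond the one-line observation above that $W<\psi^+$ at the relevant point. Since the paper explicitly says this is ``an adaptation of the standard strategy'' and cites \cite{CIL,GGIS,M,G}, I would present the proof at the level of indicating these reductions and referring to those sources for the $p=0$ bookkeeping, rather than reproducing it in full.
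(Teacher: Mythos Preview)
Your proposal is correct and follows exactly the standard route the paper points to: the paper does not actually supply a proof of Lemma~\ref{Lcompa} but only writes ``See e.g.\ \cite[Theorem 3.3.1]{G} without $\psi^+$ for the proof,'' and your sketch is a faithful elaboration of that reference together with the obstacle adaptation from \cite{M}, including the key one-line observation that $U\le\psi^+$ forces $W(y_\alpha,s_\alpha)<\psi^+(y_\alpha,s_\alpha)$ at the doubled maximum so the supersolution test is available.
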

See e.g. \cite[Theorem 3.3.1]{G} without $\psi^+$ for the proof.

We modify a mean-convex domain $U$.
 For a set $S\subset \mathbb{R}^n$ and $\delta>0$, let $U_\delta(S)$ denote its $\delta$-neighborhood, i.e.,
\[
	U_\delta(S) = \left\{ x\in\mathbb{R}^n \bigm|
	\operatorname{dist}(x,S) < \delta \right\}.
\]
If $S$ is a singleton, i.e., $S=\{p\}$, $p\in\mathbb{R}^n$, $U_\delta(S)$ is nothing but an open ball $\mathring{B}_\delta(p)$ centered at $p$ with radius $\delta$.
\begin{lemma} \label{Lmod}
Let $U$ be a bounded $C^2$ domain in $\mathbb{R}^n$ with strictly mean-convex boundary $\partial U$.
 Assume that $\Sigma$ is a compact set in $\partial U$.
 Then, for $\delta,\delta'>0$ with $\delta'<\delta$, there is a bounded $C^2$ domain $V\subset U$ in $\mathbb{R}^n$ with strictly mean-convex boundary $\partial V$ such that $\overline{V}\backslash U_\delta(\Sigma)\subset U$, $V\cap U_{\delta/2}(\Sigma)=U\cap U_{\delta/2}(\Sigma)$, $V\backslash U_{\delta'}(\partial U)=U\backslash U_{\delta'}(\partial U)$.
\end{lemma}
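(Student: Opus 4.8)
The plan is to build $V$ by denting $\partial U$ inward along its normal lines by a tiny amount that vanishes near $\Sigma$ and equals a small constant $\epsilon$ away from $\Sigma$, interpolated smoothly. The underlying principle is that a $C^{2}$‑small inward perturbation of a strictly mean‑convex $C^{2}$ hypersurface stays strictly mean‑convex, so all the work is in making ``$C^{2}$‑small'' quantitative.

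\emph{Ingredients.} First I would fix $\rho_{0}>0$ so small that the signed distance function $\rho$ to $\partial U$ (with $\rho>0$ inside $U$) is $C^{2}$ on the tubular neighborhood $N:=\{|\rho|<\rho_{0}\}$; this is standard since $\partial U\in C^{2}$. On $N$ one has $|\nabla\rho|\equiv1$, so $\nabla\rho$ is the inward unit normal of the parallel surface $\{\rho=c\}$, whose inward mean curvature is $-\operatorname{div}(\nabla\rho)=-\Delta\rho$. Strict mean‑convexity of $\partial U=\{\rho=0\}$ means $-\Delta\rho>0$ there, and since $\partial U$ is compact and $\rho\in C^{2}(N)$, after shrinking $\rho_{0}$ we may assume $-\Delta\rho\geq2c_{0}>0$ on all of $N$; i.e.\ every parallel surface in $N$ is uniformly strictly mean‑convex. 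Next I would pick a smooth cutoff $\chi\in C^{\infty}(\mathbb{R}^{n};[0,1])$ with $\chi\equiv0$ on $U_{3\delta/4}(\Sigma)$ and $\chi\equiv1$ on $\mathbb{R}^{n}\backslash U_{\delta}(\Sigma)$, which exists because $\overline{U_{3\delta/4}(\Sigma)}$ is compact and disjoint from the closed set $\mathbb{R}^{n}\backslash U_{\delta}(\Sigma)$.

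\emph{Construction and the set conditions.} For $\epsilon\in\bigl(0,\min\{\rho_{0}/2,\delta'\}\bigr)$, to be fixed below, set $w:=\rho-\epsilon\chi$ on $N$ and
\[
	V:=\{\,x\in U\mid \rho(x)>\epsilon\chi(x)\,\}.
\]
Since $\epsilon<\rho_{0}/2$ and $\chi\leq1$, on $U\backslash N$ one has $\rho\geq\rho_{0}>\epsilon\chi$, so $V\supset U\backslash N$, $V$ agrees with $U$ outside the collar $N$, and $\partial V=\{w=0\}\subset\{0\leq\rho\leq\epsilon\}\subset N$. As $|\nabla w|\geq1-\epsilon\|\nabla\chi\|_{L^{\infty}}\geq\tfrac12$ for $\epsilon$ small, the implicit function theorem gives that $\partial V$ is a compact $C^{2}$ hypersurface bounding the bounded open set $V\subset U$; moreover $V$ is connected (it is diffeomorphic to $U$ via a normal‑line push supported in $N$). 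The three identities then follow directly: on $\overline{V}\backslash U_{\delta}(\Sigma)$ one has $\chi\equiv1$, so $\rho\geq\epsilon>0$ there and $\overline{V}\backslash U_{\delta}(\Sigma)\subset\{\rho\geq\epsilon\}\subset U$; if $x\in U\cap U_{\delta/2}(\Sigma)$ then $\chi(x)=0$ (as $\operatorname{dist}(x,\Sigma)<\delta/2<3\delta/4$), so $\rho(x)>0=\epsilon\chi(x)$ and $x\in V$, while $V\subset U$ gives the reverse inclusion, whence $V\cap U_{\delta/2}(\Sigma)=U\cap U_{\delta/2}(\Sigma)$; and if $x\in U$ with $\operatorname{dist}(x,\partial U)\geq\delta'$ then $\rho(x)\geq\delta'>\epsilon\geq\epsilon\chi(x)$, so $x\in V$, whence $V\backslash U_{\delta'}(\partial U)=U\backslash U_{\delta'}(\partial U)$.

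\emph{The crux: strict mean‑convexity of $\partial V$.} This is the only delicate step. Along $\partial V=\{w=0\}$ the inward unit normal of $V$ is $\nabla w/|\nabla w|$, so the inward mean curvature of $\partial V$ equals
\[
	-\operatorname{div}\!\left(\frac{\nabla w}{|\nabla w|}\right)=\frac{F(\nabla w,\nabla^{2}w)}{|\nabla w|}
\]
with $F$ as in \eqref{EF2}. Since $\chi$ is smooth, $\|\nabla\chi\|_{L^{\infty}}$ and $\|\nabla^{2}\chi\|_{L^{\infty}}$ are finite, so $(\nabla w,\nabla^{2}w)\to(\nabla\rho,\nabla^{2}\rho)$ uniformly on $N$ as $\epsilon\downarrow0$, with $|\nabla\rho|\equiv1$ bounded away from $0$. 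Because $(p,X)\mapsto F(p,X)/|p|$ is continuous on $(\mathbb{R}^{n}\backslash\{0\})\times\mathbb{S}^{n}$, the inward mean curvature of $\partial V$ converges uniformly, as $\epsilon\downarrow0$, to $-\operatorname{div}(\nabla\rho/|\nabla\rho|)=-\Delta\rho\geq2c_{0}>0$ on $N\supset\partial V$. Hence there is $\epsilon_{0}>0$ so that for every $\epsilon\in(0,\epsilon_{0})$ the inward mean curvature of $\partial V$ is $\geq c_{0}>0$; fixing any such $\epsilon$ (also $<\min\{\rho_{0}/2,\delta'\}$) completes the proof. I expect this last estimate to be the main obstacle: it rests on the $C^{2}$‑regularity of the distance function near $\partial U$ — which is exactly what makes the parallel surfaces' mean curvature continuous and uniformly positive — and on reading the dent $\epsilon\chi$ as a genuinely $C^{2}$‑small perturbation of $\rho$, so that the (non‑divergence) mean‑curvature operator, continuous away from zero gradient, moves by little.
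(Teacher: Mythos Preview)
Your proof is correct and takes a genuinely different route from the paper's. The paper works locally: it covers $\partial U\backslash U_\delta(\Sigma)$ by finitely many coordinate patches in which $U$ is written as a supergraph $\{x_n>h(x')\}$, and in each patch it replaces $h$ by $h+\sigma$ for a small nonnegative $C^2$ bump $\sigma$, checking that the pushed graph remains strictly mean-convex; the final $V$ is obtained by iterating this ``push at a point'' over the finite cover. You instead work globally with the signed distance $\rho$ and a single cutoff $\chi$, setting $V=\{\rho>\epsilon\chi\}$ in one stroke, and then show that the inward mean curvature $-\operatorname{div}(\nabla w/|\nabla w|)$ of $\partial V=\{w=0\}$ converges uniformly on the collar to $-\Delta\rho\geq2c_0$ as $\epsilon\downarrow0$. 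Your approach is more streamlined---no covering, no iteration, and the three set identities fall out immediately from the choices of $\chi$ and $\epsilon$---at the cost of invoking the $C^2$ regularity of the signed distance on a full tubular neighborhood; the paper's approach is more hands-on and uses only local graph representations. Both proofs leave the connectedness of $V$ essentially to the reader (your normal-line-push diffeomorphism sketch is adequate at this level, and the paper does not address it at all).
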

\begin{proof}
Let $\delta>\delta'>0$ be sufficiently small.  
For a point $p\in\partial U\backslash U_\delta(\Sigma)$, we shall push $\partial U$ a little bit inside near $p$.
 For a neighborhood of $p\in\partial U$, up to a rotation $U$ can be represented as
\begin{equation} \label{Elc}
	U \cap B_R^n(p)
	= \left\{ x = (x',x_n) \in B_R^n(p) \subset \mathbb{R}^n \bigm|
	x_n > h(x'),\ x' \in B_R^{n-1}(p') \right\}
\end{equation}
with some $C^2$ function $h$ satisfying $h(p')=p_n$ provided that $R>0$ is taken sufficiently small, where $B_R^m(q)$ denotes the closed ball in $\mathbb{R}^m$ with radius $R$ centered at $q\in\mathbb{R}^m$ and $p=(p',p_n)$.
 Since $\partial U$ is strictly mean-convex, there is a nonnegative $C^2$ function $\sigma$ which is zero outside $B_{R/2}^n(p)$ such that that $\sigma(x')>0$ in $\mathring{B}_{R/2}^n(p)$ and that
\[
	V(p) = \left\{ x \in B_R^n(p) \bigm|
	x_n > h(x') + \sigma(x'),\ x' \in B_R^{n-1}(p') \right\}
\]
has still strictly mean-convex boundary $\partial V_p$ in $B_R^n(p)$.
 We take $R$ small so that $R<\delta$.
 We then define a domain pushed at $p$ as
\[
	P_p(U) = \left( U \backslash B_R^n(p) \right) \cup V(p).
\]
We may assume that $P_p(U)\backslash U_{\delta'}(\partial U)=U\backslash U_{\delta'}(\partial U)$ by taking $\sigma$ small.
 By definition, $P_p(U)\subset U$ with
\[
	\partial\left(P_p(U)\right)\cap\partial U
	=\partial U \backslash B_{R/2}^n(p) \supset \Sigma
\]
and $\partial\left(P_p(U)\right)$ is still strictly mean-convex, note that $\partial\left(P_p(U)\right)=\partial U$ in $B_R^n(p)\backslash B_{R/2}^n(p)$ so $P_p(U)\cap U_{\delta/2}(\Sigma)=U\cap U_{\delta/2}(\Sigma)$.
 See Figure \ref{Fpu}.
\begin{figure}[htb]
\centering
\includegraphics[width=4cm]{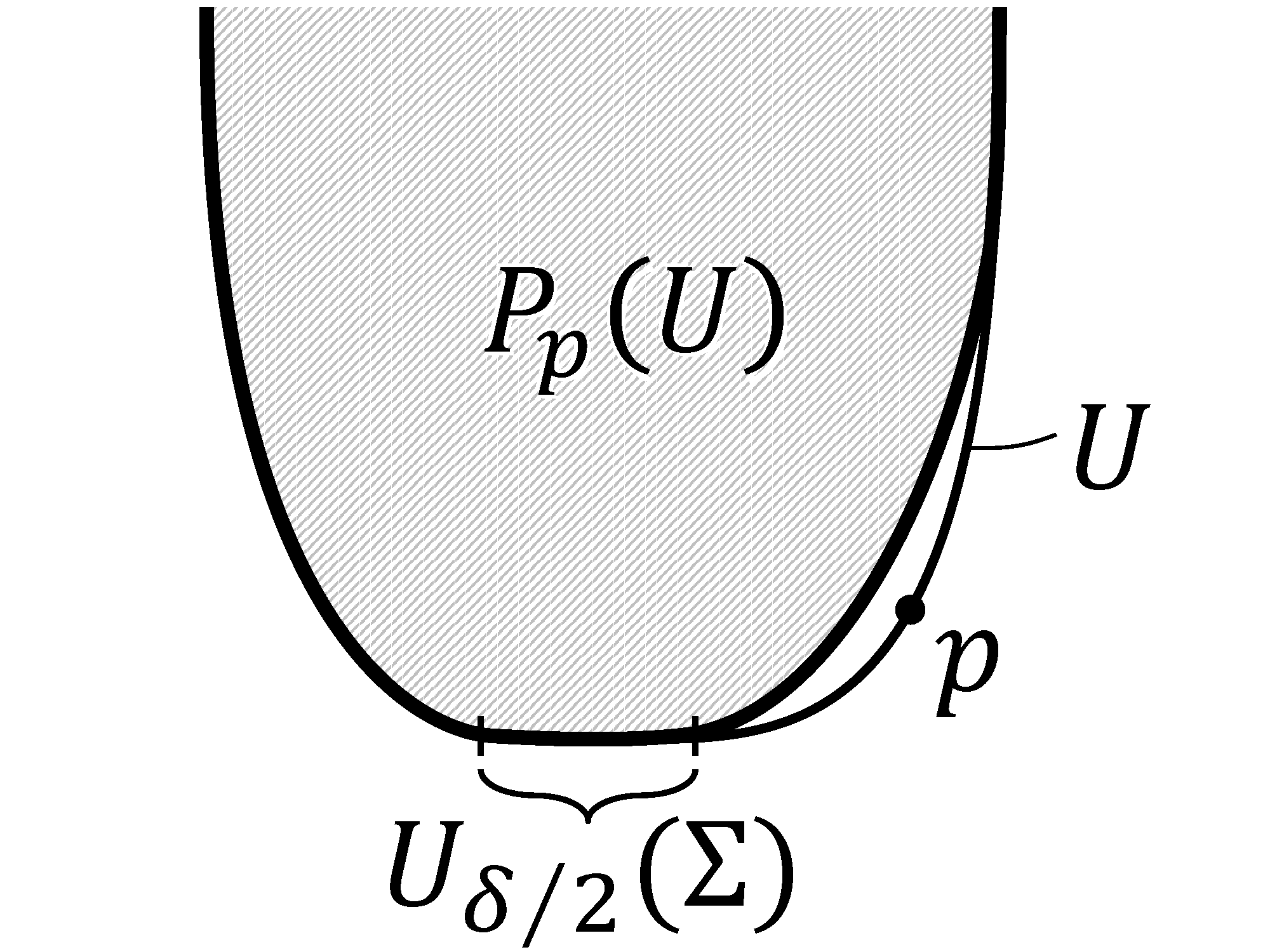}
\caption{a pushed domain} \label{Fpu}
\end{figure}

Since $\partial U\backslash U_\delta(\Sigma)$ is compact, we are able to cover $\partial U\backslash U_\delta(\Sigma)$ by finitely many local coordinate patches.
 More precisely, there are finitely many points $\{p_i\}_{i=1}^k$ and $R>0$ such that $\partial U\backslash U_\delta(\Sigma)\subset\bigcup_{i=1}^k\mathring{B}_{R/2}^n(p_i)$ and in each $B_R^n(p_i)$ the set $U\cap B_R^n(p)$ is represented as \eqref{Elc} up to translation.
 We may assume that $|p_i-p_j|>R/2$ if $i\neq j$, $1\leq i,j\leq k$.
 We push $U$ in finitely many times to get desired $V$.
 More precisely, we set
\[
	V_1 = U, \quad
	V_j = P_{p_{j-1}}(V_{j-1})\ \text{for}\ j=2,\ldots,k+1, \quad
	V = V_{k+1}. 
\]
Since $|p_i-p_j|>R/2$ for $i\neq j$, we see that $p_j\in\partial V_{j-1}$ and $B_{p_j}^n(R)$ is still coordinate patch of $V_{j-1}$ provided that $\sigma$ is taken sufficiently small for each step.
 Thus our construction is well defined.
 By the property of pushing, we easily observe that our $V$ satisfies all desired properties.
\end{proof}
For a strictly mean-convex domain $U$, the distance function $\operatorname{dist}(x,U)$ is a standing (time-independent) viscosity subsolution of the level-set equation in some neighborhood of $\overline{U}$.
\begin{lemma} \label{Lmcst}
Let $U$ be a bounded $C^2$ domain in $\mathbb{R}^n$ with strictly mean-convex boundary.
 Then $\operatorname{dist}(x,U)$ is a standing viscosity subsolution of the level-set equation \eqref{Elev} in $U_\delta(\overline{U})$ for sufficiently small $\delta>0$.
\end{lemma}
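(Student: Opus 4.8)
The plan is to show that $d(x):=\operatorname{dist}(x,U)$ is a classical subsolution of \eqref{Elev} at every point where it is smooth with non-vanishing gradient, and to handle separately test functions touching $d$ from above on $\overline U$, where $d\equiv 0$. First I would isolate the geometric input. For $r>0$ put $N_{r}:=\{x\in\mathbb{R}^{n}\setminus\overline U\mid \operatorname{dist}(x,\partial U)<r\}$. Since $\partial U$ is a compact $C^{2}$ hypersurface, by the standard regularity of the distance function near such a surface there is $\delta_{0}>0$ so that on $N_{\delta_{0}}$ each point has a unique nearest point on $\partial U$, the identity $d=\operatorname{dist}(\cdot,\partial U)$ holds, $d\in C^{2}(N_{\delta_{0}})$, and $|\nabla d|\equiv1$; moreover $\nabla d(x)$ is the outward unit normal of $\partial U$ at the nearest point of $x$, and $\{d=d(x)\}$ is the outer parallel hypersurface of $\partial U$ at distance $d(x)$. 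Using $|\nabla d|\equiv1$ (so that $\nabla^{2}d\,\nabla d=0$), the level-set identity for mean curvature recalled before \eqref{Elev} gives
\[
	F\bigl(\nabla d(x),\nabla^{2}d(x)\bigr)=-\operatorname{trace}\bigl(\nabla^{2}d(x)\bigr)=-H(x),
\]
where $H(x)$ is the sum of the principal curvatures of $\{d=d(x)\}$ with respect to $-\nabla d$, i.e.\ with respect to the normal pointing into $U$. The principal curvatures of the outer parallel hypersurfaces depend continuously on the distance parameter, and at distance $0$ the quantity $H$ equals the inward mean curvature of $\partial U$, which is strictly positive by hypothesis; by compactness of $\partial U$ we may thus fix $\delta\in(0,\delta_{0}]$ with $H>0$ on $N_{\delta}$, so that $F(\nabla d,\nabla^{2}d)<0$ on $N_{\delta}$.

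Next I would verify the viscosity inequality in $Q:=U_{\delta}(\overline U)\times(0,T)$ for arbitrary $T>0$. Let $\varphi\in C^{2}(Q)$ and $(\hat x,\hat t)\in Q$ be such that $d-\varphi$ attains its maximum over $Q$ at $(\hat x,\hat t)$; subtracting a constant we may assume this maximum is $0$, so $\varphi\ge d$ on $Q$ and $\varphi(\hat x,\hat t)=d(\hat x)$. Because $d$ does not depend on $t$ and $\hat t\in(0,T)$, the map $t\mapsto\varphi(\hat x,t)$ has an interior minimum at $\hat t$, hence $\varphi_{t}(\hat x,\hat t)=0$. If $d(\hat x)>0$, then $\hat x\in N_{\delta}$, $d$ is $C^{2}$ near $\hat x$ with $\nabla d(\hat x)\neq0$, and the maximum of $d-\varphi$ forces $\nabla\varphi(\hat x,\hat t)=\nabla d(\hat x)$ and $\nabla^{2}\varphi(\hat x,\hat t)\ge\nabla^{2}d(\hat x)$; since $F$ is continuous off $\{p=0\}$, degenerate ellipticity (F2) gives
\[
	\varphi_{t}(\hat x,\hat t)+F_{*}\bigl(\nabla\varphi(\hat x,\hat t),\nabla^{2}\varphi(\hat x,\hat t)\bigr)
	=F\bigl(\nabla d(\hat x),\nabla^{2}\varphi(\hat x,\hat t)\bigr)
	\le F\bigl(\nabla d(\hat x),\nabla^{2}d(\hat x)\bigr)=-H(\hat x)<0.
\]
If $d(\hat x)=0$, then $\varphi\ge d\ge0$ on $Q$ with $\varphi(\hat x,\hat t)=0$, so $(\hat x,\hat t)$ is a global minimum of $\varphi$ over $Q$; hence $\nabla\varphi(\hat x,\hat t)=0$ and $\nabla^{2}\varphi(\hat x,\hat t)\ge O$. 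For any $q\neq0$ and $Y\ge O$,
\[
	F(q,Y)=-\Bigl(\operatorname{trace}Y-\bigl\langle Y\tfrac{q}{|q|},\tfrac{q}{|q|}\bigr\rangle\Bigr)\le0,
\]
since $\operatorname{trace}Y\ge\lambda_{\max}(Y)\ge\langle Y\tfrac{q}{|q|},\tfrac{q}{|q|}\rangle$; passing to the lower semicontinuous envelope, $F_{*}(0,Y)\le0$ for all $Y\ge O$, and therefore, as $\varphi_{t}(\hat x,\hat t)=0$,
\[
	\varphi_{t}(\hat x,\hat t)+F_{*}\bigl(\nabla\varphi(\hat x,\hat t),\nabla^{2}\varphi(\hat x,\hat t)\bigr)=F_{*}\bigl(0,\nabla^{2}\varphi(\hat x,\hat t)\bigr)\le0.
\]
In either case the subsolution inequality holds, so $d$ is a standing viscosity subsolution of \eqref{Elev} in $U_{\delta}(\overline U)$.

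The only genuinely non-routine ingredient is the geometric estimate $H>0$ on the collar $N_{\delta}$: it combines the $C^{2}$-smoothness of the distance function near a compact $C^{2}$ hypersurface with the fact that strict mean-convexity of $\partial U$ persists, for a short outward distance, in the parallel hypersurfaces of $\partial U$. This is exactly where the hypothesis on $\partial U$ and its compactness are used, and it is what forces $\delta$ to be small. Once it is in place the viscosity verification is standard: the case $d(\hat x)>0$ is the usual reduction ``a smooth classical subsolution is a viscosity subsolution'' via (F2), while the case $d(\hat x)=0$ uses only the degeneracy of $F$ at $p=0$, namely $F_{*}(0,Y)\le0$ for $Y\ge O$, to absorb test functions touching $d$ from above on $\overline U$.
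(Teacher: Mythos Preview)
Your proof is correct. Both your argument and the paper's rest on the same geometric input---strict mean convexity of $\partial U$ persists on nearby outer parallel hypersurfaces---but the two organize the viscosity verification differently. The paper works with the \emph{signed} distance $w$ on a two-sided collar $U_\delta(\partial U)$, uses the explicit parallel-surface curvature formula $\kappa_i^d=\kappa_i/(1+d\kappa_i)$ to see that $-\operatorname{div}(\nabla w/|\nabla w|)\le 0$ there, and then recovers $\operatorname{dist}(\cdot,U)=w\vee 0$ as the maximum of the subsolution $w$ with the constant subsolution $0$. You instead work directly with the one-sided distance $d=\operatorname{dist}(\cdot,U)$: on $\{d>0\}$ you check the classical inequality (invoking only continuity of the principal curvatures in the distance parameter rather than the explicit formula), and on $\{d=0\}=\overline U$ you handle test functions by the observation that any $\varphi\ge d$ touching at such a point has an interior minimum, so $\nabla\varphi=0$, $\nabla^2\varphi\ge O$, and $F_*(0,\nabla^2\varphi)\le 0$. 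Your route avoids the signed distance and the ``max of subsolutions'' device, trading them for a short explicit check at the degenerate set; the paper's route is slightly more streamlined once one is willing to quote that taking $\max$ with a constant preserves subsolutions. Both are standard and equally short.
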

\begin{proof}
Let $\mathbf{n}$ be the inward unit normal vector field of $\partial U$.
Let $\kappa_i(x)$ ($1\leq i \leq n-1$) be principal curvatures of $\partial U$ in the direction of $\mathbf{n}$.
 As in \cite[Chapter 14, Appendix]{GT} (see also the proof of Lemma \ref{Lsigma}), the principal curvatures $\kappa_i^d(y)$ 
for $d\in\R$ of
\[
	S_d = \left\{ y = x - d\mathbf{n}(x) \bigm| x \in \partial U \right\}
\]
equals
\[
	\kappa_i^d(y) = \frac{\kappa_i(x)}{1+d\kappa_i(x)}.
\]
The strict mean convexity implies that $\inf_{\partial U}H>0$, where $H(x)=\sum_{i=1}^{n-1}\kappa_i(x)$.
 Thus, for sufficiently small $|d|$, say $|d|<\delta$,
\[
	H_d = \sum_{i=1}^{n-1}\kappa_i^d(y) > 0
	\quad\text{on}\quad y \in S_d.
\]
We set a signed distance
\begin{equation*}
	w(x)= \left \{
\begin{array}{l}
	\operatorname{dist}(x,U), \quad
	x \in \mathbb{R}^n \backslash U \vspace{0.2em} \\
	-\operatorname{dist}(x,\mathbb{R}^n \backslash U), \quad
	x \in U
\end{array}
\right.
\end{equation*}
and observe that $w$ is $C^2$ in a neighborhood of $\partial U$ since $\partial U$ is $C^2$ (see \cite{GT, KP}).
Since $\mathbf{n}=-\nabla w/|\nabla w|$ and $H_d=-\operatorname{div}\mathbf{n}$, we conclude that
\[
	-\operatorname{div} \left( \nabla w/|\nabla w| \right) \leq 0
\]
in $U_\delta(\partial U)$.
 Thus, $w$ is a standing (time-independent) viscosity subsolution of \eqref{Elev} in $U_\delta(\partial U)$.
 By definition, $w\vee0=\max(w,0)$ is also a viscosity subsolution.
 Thus, $\operatorname{dist}(x,U)$ is a viscosity subsolution of \eqref{Elev} in $U_\delta(U)$.
\end{proof}

To compare two level-set flows, it is convenient to recall a renormalization lemma.
\begin{lemma} \label{Lnor}
Let $g_1$ and $g_2$ are two nonnegative continuous functions on a compact set $K$ in $\mathbb{R}^n$.
 Assume that
\[
	\left\{ x \in K \bigm| g_2(x) = 0 \right\}
	\subset \left\{ x \in K \bigm| g_1(x) = 0 \right\}.
\]
Then, there is $\theta\in C([0,\infty))$ which is increasing and $\theta(0)=0$ such that
\[
	g_1 \leq \theta \circ g_2 \quad\text{on}\quad K.
\]
\end{lemma}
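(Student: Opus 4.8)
The plan is to construct $\theta$ explicitly from the modulus of continuity of $g_1$ relative to the zero set of $g_2$. Write $Z_2 = \{x\in K \mid g_2(x)=0\}$, which by hypothesis is contained in $Z_1 = \{x\in K\mid g_1(x)=0\}$; note $Z_2$ is nonempty unless $g_2$ never vanishes, in which case $\inf_K g_2 > 0$ by compactness and one may simply take any increasing $\theta$ with $\theta(0)=0$ and $\theta(\inf_K g_2) \geq \max_K g_1$, interpolated linearly. So assume $Z_2 \neq \emptyset$. The key quantity is
\[
	m(s) := \sup\left\{ g_1(x) \bigm| x \in K,\ g_2(x) \leq s \right\}, \quad s \geq 0.
\]
First I would check that $m$ is well defined and finite for each $s\geq 0$ (the set over which we take the sup is nonempty for $s\geq 0$ since it contains $Z_2$, and $g_1$ is bounded on the compact $K$), that $m$ is nondecreasing, and — this is the crucial point — that $m(0^+) = m(0) = 0$, i.e. $m(s)\to 0$ as $s\downarrow 0$.

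The claim $m(s)\to 0$ is the heart of the argument and I expect it to be the main obstacle, though it is not deep: it is a compactness argument. Suppose not; then there is $\varepsilon>0$ and a sequence $x_k\in K$ with $g_2(x_k)\to 0$ but $g_1(x_k)\geq \varepsilon$. By compactness of $K$, pass to a subsequence $x_k\to x_\infty\in K$. Continuity of $g_2$ gives $g_2(x_\infty)=0$, so $x_\infty\in Z_2\subset Z_1$, hence $g_1(x_\infty)=0$; but continuity of $g_1$ forces $g_1(x_k)\to g_1(x_\infty)=0$, contradicting $g_1(x_k)\geq\varepsilon$. Thus $m$ is a nondecreasing function on $[0,\infty)$ with $m(0)=0$ and $m(s)\to 0$ as $s\downarrow 0$, and by construction $g_1(x) \leq m(g_2(x))$ for every $x\in K$.

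It remains to dominate $m$ by a genuinely increasing continuous function $\theta$ with $\theta(0)=0$. The function $m$ itself may fail to be continuous or strictly increasing, so I would regularize: set, for instance,
\[
	\theta(s) := s + \frac{2}{s}\int_{s/2}^{s} m(r)\, dr \quad (s>0), \qquad \theta(0):=0.
\]
The averaging term is continuous in $s$, is $\geq m(s/2)$ hence tends to $0$ as $s\downarrow 0$ by the previous step (using monotonicity of $m$), and the added linear term $s$ makes $\theta$ strictly increasing and ensures $\theta(s)\geq \frac{2}{s}\int_{s/2}^s m(r)\,dr \geq m(s/2)$; replacing $m$ by its average over $[s,2s]$ instead, or simply using $\theta(s) = s + \frac{1}{s}\int_s^{2s} m(r)\,dr$, one gets $\theta(s) \geq m(s)$ directly by monotonicity. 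Then $g_1(x)\leq m(g_2(x)) \leq \theta(g_2(x)) = (\theta\circ g_2)(x)$ on $K$, $\theta$ is increasing and continuous on $[0,\infty)$, and $\theta(0)=0$, which completes the proof. (Any standard majorization of a monotone function vanishing at $0^+$ by a continuous increasing function works here; the integral smoothing is just one convenient device.)
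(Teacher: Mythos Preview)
Your proof is correct and follows the same approach as the paper: the paper defines the identical auxiliary function $\widetilde{\theta}(\sigma)=\sup\{g_1(z)\mid z\in K,\ g_2(z)\le\sigma\}$, notes that it is nondecreasing with $\widetilde{\theta}(\sigma)\downarrow 0$ as $\sigma\downarrow 0$, and then remarks that one can construct the desired $\theta\ge\widetilde{\theta}$ (referring to \cite[Lemma 4.2.9]{G} for details). You have supplied precisely those details---the compactness argument for $m(0^+)=0$ and an explicit integral regularization (your corrected choice $\theta(s)=s+\tfrac{1}{s}\int_s^{2s}m(r)\,dr$ works, since the change of variables $r=su$ shows the average is nondecreasing in $s$)---together with the edge case $Z_2=\emptyset$.
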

For the proof, see \cite[Lemma 4.2.9]{G}.
 Here is a basic idea.
 We set
\[
	\widetilde{\theta}(\sigma) = \sup\left\{ g_1(z) \bigm| z \in K,\ g_2(z) \leq \sigma \right\}
\]
and observe that $\widetilde{\theta}(\sigma)\downarrow0$ as $\sigma\downarrow0$ and nondecreasing.
 It is not difficult to construct our desired $\theta\geq\widetilde{\theta}$.

We are now in position to prove Theorem \ref{Tcsz}.
\begin{proof}[Proof of Theorem {\rm\ref{Tcsz}}]
Let $u$ be the viscosity solution of the level-set equation \eqref{Elev} with upper obstacle $\psi^+(x)=\operatorname{dist}(x,\Sigma)$ and initial data $u_0=\operatorname{dist}(x,\Gamma_0)$.
 By Lemma \ref{Lmod}, for $\delta>0$ there is a bounded $C^2$ domain $V\subset U$ with strictly mean-convex boundary $\partial V$ such that $\overline{V}\backslash U_\delta(\Sigma)\subset U$, $V\cap U_{\delta/2}(\Sigma)=U\cap U_{\delta/2}(\Sigma)$, $V\backslash U_\delta(\partial U)=U\backslash U_\delta(\partial U)$.
 By Lemma \ref{Lmcst}, $\operatorname{dist}(x,V)$ is a standing viscosity subsolution of the level-set equation in $U_\delta(\overline{V})$.
 By the invariance Lemma \ref{Linv}, $w=\delta/2\wedge \operatorname{dist}(x,V)$ is a viscosity subsolution of the level-set equation in $\mathbb{R}^n\times[0,\infty)$.
 Since $w\leq\psi^+$, $w$ is also a viscosity subsolution with obstacle $\psi^+$.
 By Lemma \ref{Linv}, $u^\delta=u\wedge\delta$ is a viscosity solution with obstacle $\psi^+$ in $\mathbb{R}^n\times(0,\infty)$.
 We may assume $\Gamma_0\subset V$ by taking $\delta$ and $\delta'$ small since we assume $\Gamma_0\setminus\Sigma\subset U$.
 Thus $w\leq u_0\wedge\delta$ at $t=0$.
 By the comparison principle (Lemma \ref{Lcompa}) in $B_R(0)\times(0,T)$ for a large $R$ such that $u^\delta=\delta$ and $w=\delta/2$ outside $B_R$, we observe that $w\leq u$.
 In particular $u>0$ outside $\overline{V}$ so $\Gamma_t\backslash\overline{V}=\emptyset$ for all $t\geq0$.
 In particular, $\Gamma_t\backslash U_{\delta/2}(\Sigma)\subset U$ for all $t\geq0$.
 Since $\delta>0$ is arbitrary, this implies that $\Gamma_t\backslash\Sigma\subset U$ so that $u>0$ on $\partial U\backslash\Sigma$.

We shall prove that $\Gamma_t^U\subset\Gamma_t$ in $U$.
 For the viscosity solution $v$ of the Dirichlet problem \eqref{eq:CD}, we take
\[
	\psi^+(x,t) = v(x,t) + \sigma(x) \quad\text{in}\quad \overline{U}\times[0,\infty),
\]
where $\sigma\in C(\overline{U})$ is positive in $U$ and $\sigma=0$ on $\partial U$.
 For $T>0$ we extend $\psi^+$ outside $\overline{U}\times[0,T]$ continuously such that $\psi^+>0$ on $\overline{U}^c\times[0,T]$ such that it equals a positive constant $c$ on $B_R(0)^c\times[0,T]$ for a large $R>0$ such that $B_R(0)\supset\overline{U}$.  
 Here, $A^c$ denotes the complement of $A$, i.e., $A^c:=\R^n\setminus A$. 
Let $\tilde{\psi}^+$ be such an extension.
 We take $\Psi^+=\tilde{\psi}^+\vee\left(\operatorname{dist}(x,\overline{U})\wedge c\right)$ so that $\Psi^+>0$ on $\overline{U}^c\times[0,T]$.
 Note that $\Psi^+(x,t)=0$ if and only if $(x,t)\in\Sigma\times[0,\infty)$ so $\Psi^+$ is an obstacle function of $\Sigma$.
 We extend $v$ outside $U$ so that $v=\Psi^+$ in $U^c\times[0,T)$.
 Let $u$ be the viscosity solution of the level-set equation with upper obstacle $\Psi^+$ and initial data $u_0=v_0=\left.v\right|_{t=0}$ in $\mathbb{R}^n$.
 By definition, $v$ is a viscosity supersolution of the level-set equation in $B_R(0)\times(0,T)$ with upper obstacle $\Psi^+$.
 By comparison principle for the obstacle problem in $B_R(0)$ (Lemma \ref{Lcompa}), we conclude that $u\leq v$ in $B_R(0)\times(0,T)$.
 Since $v=\Psi^+>0$ outside $\overline{U}$, this implies that $\Gamma_t^U\subset\Gamma_t$.

It remains to prove $\Gamma_t\subset\Gamma_t^U$ in $U$.
 As we already observed, $u\leq v$ in $U\times(0,T)$ so $u$ is a viscosity solution of the level-set equation in $U\times(0,T)$ (with no obstacle).
 The property $\Gamma_t\backslash\Sigma\subset U$ implies that $u>0$ on $\partial U\backslash\Sigma$.
 We set
\[
	K = (\partial U \times [0,T]) \cup (\overline{U} \times \{0\})
\]
and $g_1=\left.v\right|_K$, $g_2=\left.u\right|_K$ and observe that
\[
	\left\{ (x,t) \in K \bigm| g_2(x,t) = 0 \right\}
	= \left\{ (x,t) \in K \bigm| g_1(x,t) = 0 \right\}	
	= \Sigma \times [0,T] \cup \Gamma_0 \times \{0\}.
\]
By the renormalizing Lemma \ref{Lnor}, there is an increasing function $\theta\in C([0,\infty))$ with $\theta(0)=0$ and $g_1\leq\theta\circ g_2$ on $K$.
 By the invariance Lemma \ref{Linv}, $\theta\circ u$ is a viscosity solution of the level-set equation in $U\times(0,T)$ with initial data $\theta\circ u_0$ but without any obstacle.
 By the comparison principle (without obstacle) in $U\times(0,T)$, we conclude that $v\leq\theta\circ u$.
 This implies $\Gamma_t\subset\Gamma_t^U$ in $U$.
 The proof is now complete.
\end{proof}

\section{Consistency with smooth solutions} \label{S4} 

We shall prove Theorem \ref{Tcons}.
 For this purpose, we shall construct suitable viscosity sub- and supersolutions based on $\Gamma_t^s$ as in \cite{ES}, 
 where $\{\Gamma_t^s\}_{0\le t\le T}\subset\mathbb{R}^n$ is a family of compact hypersufraces given in Theorem \ref{Tcons}. 
 We first observe that the distance function $\operatorname{dist}(x,\Sigma)$ of $\Sigma$ is a standing viscosity subsolution of the level-set equation \eqref{Elev} near $\Sigma$.
\begin{lemma} \label{Lsigma}
Let $\Sigma$ be a $C^2$ compact $k$ codimensional {\rm(}$n-k$ dimensional{\rm)} manifold embedded in $\mathbb{R}^n$ possibly with boundary. 
\begin{enumerate}
\item[{\rm(i)}] There exists $\delta_0>0$ such that the set $U_\delta(\Sigma)$ is a strictly mean-convex bounded $C^2$ domain in $\mathbb{R}^n$ for $\delta\in(0,\delta_0)$ provided that $\Sigma$ has no geometric boundary. 
\item[{\rm(i\hspace{-0.15em}i)}] The function $\operatorname{dist}(x,\Sigma)$ is a standing viscosity subsolution of the level-set equation in $U_\delta(\Sigma)$ including the case when $\Sigma$ has $n-k-1$ dimensional $C^2$ geometric boundary. 
\end{enumerate}
\end{lemma}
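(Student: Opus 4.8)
The plan is to analyze the geometry of the distance function to $\Sigma$ using the standard tubular-neighborhood machinery, treating the codimension $k\geq 2$ of $\Sigma$ carefully. For part (i), since $\Sigma$ is a compact $C^2$ manifold without boundary embedded in $\mathbb{R}^n$, the tubular neighborhood theorem gives a $\delta_0>0$ such that for $\delta\in(0,\delta_0)$ the map $(y,v)\mapsto y+v$ from the radius-$\delta$ disc bundle of the normal bundle $N\Sigma$ is a $C^1$ diffeomorphism onto $\overline{U_\delta(\Sigma)}$, and $\partial U_\delta(\Sigma)=\{x\mid \operatorname{dist}(x,\Sigma)=\delta\}$ is a $C^2$ hypersurface (normal exponential of a $C^2$ submanifold; one gains regularity for the level sets away from $\Sigma$). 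First I would compute the principal curvatures of $\partial U_\delta(\Sigma)$ with respect to the inward normal. Pick $x_0\in\partial U_\delta(\Sigma)$ with nearest point $y_0\in\Sigma$; the inward normal is $\mathbf{n}(x_0)=(y_0-x_0)/\delta$, which points back toward $\Sigma$. Decompose the tangent space $T_{x_0}\partial U_\delta(\Sigma)$ into two parts: the $(n-k-1)$ directions ``around'' the tube (spherical directions in the normal fiber), which contribute curvature $\approx 1/\delta$ each, all positive; and the $(n-k)$ directions ``along'' $\Sigma$, whose curvatures are $O(1)$ as $\delta\to 0$ (governed by the second fundamental form of $\Sigma$ and its normal connection, via a Riccati/Jacobi-field computation analogous to the formula $\kappa_i^d=\kappa_i/(1+d\kappa_i)$ recalled in Lemma \ref{Lmcst}). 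Hence $H_\delta = \frac{n-k-1}{\delta} + O(1)$, and since $k\geq 2$ forces $n-k-1\geq n-3\geq \ldots$; more to the point, when $\Sigma$ has codimension $k\geq 2$ we have $n-k-1 = (n-1)-k \leq n-3$, but crucially $n-k-1\geq 0$ and when $k=2$ it equals $n-3\geq$… — the key fact is simply that the dominant term $\frac{n-k-1}{\delta}$ is positive whenever $n-k-1\geq 1$, i.e. $k\leq n-2$; and if $k=n-1$ one must check separately, but $\Sigma$ of codimension $k$ with $n-k$ dimensional means $\dim\Sigma = n-k$, and $U_\delta(\Sigma)$ is a genuine domain only when $\Sigma$ has positive codimension, with the spherical directions numbering $k-1\geq 1$. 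I would therefore recompute: the normal fibers are $k$-dimensional, the tube cross-section is a $(k-1)$-sphere of radius $\delta$, contributing $k-1$ principal curvatures each $\sim 1/\delta$; so $H_\delta = \frac{k-1}{\delta}+O(1)$, which is strictly positive for small $\delta$ precisely because $k\geq 2$. This is the structural reason the hypothesis $k\geq 2$ appears.

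For part (ii) I would argue as in Lemma \ref{Lmcst}: away from the cut locus, $\operatorname{dist}(\cdot,\Sigma)$ is $C^2$ on $U_\delta(\Sigma)\setminus\Sigma$ (for $\delta<\delta_0$), its gradient is the unit normal field $-\mathbf{n}$ to the level sets $S_d=\{\operatorname{dist}(\cdot,\Sigma)=d\}$, and by part (i) (applied on each slice, or its proof) these level sets have positive mean curvature $H_d = \operatorname{div}(\nabla\operatorname{dist}(\cdot,\Sigma))>0$ for $0<d<\delta_0$. Thus
\[
-\operatorname{div}\!\left(\frac{\nabla\operatorname{dist}(x,\Sigma)}{|\nabla\operatorname{dist}(x,\Sigma)|}\right) = -H_d < 0
\]
classically on $U_{\delta}(\Sigma)\setminus\Sigma$, so $\operatorname{dist}(\cdot,\Sigma)$ is a classical, hence viscosity, subsolution of \eqref{Elev} there. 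It remains to handle the set $\Sigma$ itself, where $\operatorname{dist}(\cdot,\Sigma)$ attains its minimum value $0$ and is not differentiable. At such points one uses the standard fact that $\nabla\operatorname{dist}(\cdot,\Sigma)=0$ on $\Sigma$ belongs to the subdifferential structure making the level-set operator's singular term vacuous; concretely, if $\varphi\in C^2$ touches $\operatorname{dist}(\cdot,\Sigma)$ from above at $\hat x\in\Sigma$, then since $\operatorname{dist}(\cdot,\Sigma)\geq 0 = \operatorname{dist}(\hat x,\Sigma)$ and $\varphi\geq\operatorname{dist}(\cdot,\Sigma)$ with equality at $\hat x$, we get $\nabla\varphi(\hat x)=0$, and one must verify $-\operatorname{trace}((I-\widehat{p\otimes p})\nabla^2\varphi(\hat x))\leq 0$ in the $F_*$ sense at $p=0$. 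Using the definition of $F_*(0,\cdot)$ and the fact that $\nabla^2\varphi(\hat x)\geq 0$ restricted to $T_{\hat x}\Sigma$ is not enough by itself; instead one approximates $\hat x$ by nearby points where $\operatorname{dist}$ is smooth and passes to the limit, or one invokes \cite[Example 1.7]{CIL}-type reasoning — this uses that $\operatorname{dist}(\cdot,\Sigma)$ is a subsolution on a punctured neighborhood together with lower semicontinuity of $F_*$, so the subsolution property extends across $\Sigma$. (The references \cite{Amb,AM} mentioned in the acknowledgements presumably supply exactly this kind of fact about distance functions to $C^2$ submanifolds.)

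The case where $\Sigma$ has a $C^2$ geometric boundary $b\Sigma$ of dimension $n-k-1$ requires the extra care alluded to: $U_\delta(\Sigma)$ is then no longer a smooth domain near $b\Sigma$, but the claim in (ii) is only that $\operatorname{dist}(\cdot,\Sigma)$ is a subsolution, and this is a local pointwise condition. Away from $b\Sigma$ the interior argument above applies verbatim. Near a point $\hat x$ with nearest point $y_0\in b\Sigma$, the distance function behaves like the distance to a ``corner'' (the boundary face adds convexity), and one checks that any $C^2$ test function touching from above still yields the subsolution inequality; the heuristic is that adding the boundary only makes the sublevel sets more convex, i.e. pushes mean curvature further in the favorable direction, so no new obstruction arises. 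I expect \textbf{the main obstacle} to be precisely this last point — giving a clean verification of the viscosity subsolution inequality at points of $\Sigma$ and especially at $b\Sigma$, where the distance function is only Lipschitz and the normal direction degenerates; the interior mean-curvature computation of part (i), while requiring bookkeeping of Jacobi fields along $\Sigma$, is routine and parallels Lemma \ref{Lmcst}.
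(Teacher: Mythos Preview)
Your treatment of (i) is essentially the paper's: both compute the principal curvatures of $S_\delta=\partial U_\delta(\Sigma)$ and find $k-1$ of them (the spherical directions in the normal fibre) equal to $1/\delta$ while the remaining $n-k$ (tangential) ones equal $\kappa_i(y,p)/(1-\delta\kappa_i(y,p))$, giving $H_\delta=(k-1)/\delta+O(1)>0$ for small $\delta$ since $k\geq 2$. The paper obtains this via the matrix Riccati equation $\dot M=-M^2$ for the Hessian of $d$ along normal rays, which is precisely the Jacobi--field computation you invoke. For (ii) without boundary, your argument again matches the paper's.

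A minor point: your worry about test functions at points of $\Sigma$ is unnecessary. Since $k\geq 2$, the distance function behaves like $|z|$ in the $k$ normal directions near any point of $\Sigma$, so no $C^2$ (indeed no $C^1$) function can touch $\operatorname{dist}(\cdot,\Sigma)$ from above there; the viscosity subsolution condition is vacuous on $\Sigma$. The paper does not discuss this.

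The genuine gap is the boundary case. You correctly flag it as ``the main obstacle'', but your heuristic (``the boundary face adds convexity'', ``sublevel sets become more convex'') is too vague and is not the mechanism the paper uses. The paper's key observation is that $b\Sigma$ is itself a closed $C^2$ manifold of codimension $k+1\geq 3$, so part (i) applies directly to $b\Sigma$. One then decomposes $U_\delta(\Sigma)$ via
\[
Z=\bigl\{x\in\mathbb{R}^n \,\bigm|\, \operatorname{dist}(x,b\Sigma)\leq \operatorname{dist}(x,\Sigma)\bigr\}.
\]
On $Z$ one has the identity $\operatorname{dist}(x,\Sigma)=\operatorname{dist}(x,b\Sigma)$, so the level sets there are pieces of the tube around $b\Sigma$, strictly mean-convex by (i). Off $Z$ the nearest point lies in the interior of $\Sigma$ and the proof of (i) applies verbatim. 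Across $\partial Z$ the distance function is $C^1$ (the outward normals from the two sides agree) with one-sided $C^2$ extensions from $Z$ and from its complement, which suffices for the viscosity subsolution property. This reduction to (i) for $b\Sigma$ via the set $Z$ is the idea missing from your proposal.
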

\begin{proof}
The proof is more or less known \cite[Remark 4]{Amb}, but we give it for completeness. 

We first prove (i). 
As in \cite{GT, KP}, the distance function $d(x):=\operatorname{dist}(x,\Sigma)$ is $C^2$ in $U_\delta(\Sigma)\backslash\Sigma$ for a small $\delta$, say $\delta\in(0,\delta_1]$.
 Indeed, $C^2$ regularity implies that  there is the unique $y(x)\in\Sigma$ such that $\left|x-y(x)\right|=d(x)$ for $x\in U_\delta(\Sigma)$ for small $\delta>0$.
 Note that $y(x)$ is a critical point of $C^2$ function $|x-y|$ as a function of $y\in\Sigma$ for $x\in U_\delta(\Sigma)\backslash\Sigma$ and its differential in $y$ is of full rank.
 By the implicit function theorem $y(x)$ is $C^1$ in $x\in U_\delta(\Sigma)\backslash\Sigma$ for small $\delta>0$.
 Since
\[
	x = y(x) + d(x) \nabla d(x)
\]
and $y$ is $C^1$, $\nabla d(x)$ is $C^1$ so that $d\in C^2\left(U_\delta(\Sigma)\backslash\Sigma\right)$.

We may assume that $U_\delta(\Sigma)$ is a $C^2$ domain for $\delta\in(0,\delta_1]$.
 As discussed in the proof of Lemma \ref{Lmcst}, we are interested in the evolution of principal curvatures of $S_\delta=\partial U_\delta(\Sigma)$ as $\delta\downarrow0$.
 This is nowadays standard.
 See e.g.\ \cite[Theorem 3.2]{AS}, \cite[Theorem 2.2]{AM} where evolution of $\nabla^2d^2/2$ is studied.
 We consider the evolution of the Hessian matrix $\nabla^2d=(d_{ij})_{1\leq i,j\leq n}$ of $d$, where $d_i=\partial d/\partial x_i$, $d_{ij}=\partial^2d/\partial x_i\partial x_j$.
 Assuming for the moment that $\Sigma$ is $C^3$ so that $d_{ij}$ is $C^1$,
 we set
\[
	M(t) = d_{ij} (y+tp), \quad y \in \Sigma, \quad t>0
\]
for a unit vector $p$ orthogonal to the tangent space $T_y\Sigma$ of $\Sigma$ at $y$.
 We differentiate in time to get
\[
	\frac{d}{dt} M(t) = \sum_{k=1}^n d_{ijk} (y+tp)p_k
	= \sum_{k=1}^n d_{ijk} d_k (y+tp)
\]
since $p=\nabla d(y+tp)$.
 Since $\left(\sum_{k=1}^n d_k^2\right)_i=0$ so that $\sum_{k=1}^n d_{ik}d_k=0$, we differentiate in $x_j$ to get
\[
	\sum_{k=1}^n d_{ijk} d_k
	+ \sum_{k=1}^n d_{ik} d_{kj} = 0.
\]
Thus
\begin{equation} \label{EESec}
	\frac{d}{dt} M(t) = -M(t)^2, \quad
	0 < t < \delta.
\end{equation}
In the case that $\Sigma$ is just $C^2$, we interpret $d_{ijk}$ as a distribution.
 For a derivative $\partial f/\partial x_i$ of a continuous function $f$ in a domain $D\subset \mathbb{R}^d$, one may multiple $g\in C^1(D)$.
 Indeed, we define
\[
	\left\langle g \frac{\partial f}{\partial x_i},\varphi \right\rangle
	:= -\int_D f \frac{\partial(g\varphi)}{\partial x_i} dx
\]
for compactly supported smooth function $\varphi$ in $D$.
 Thus \eqref{EESec} is also obtained in the sense of distribution, which agrees with conventional solution.

By the evolution equation \eqref{EESec}, the eigenvector is chosen independent of $t$.
 Let $P$ be the orthogonal projection to the tangent space $T_y\Sigma$ from $\mathbb{R}^n$.
 Then $\lim_{t\downarrow0}\left(-PM(t)P\right)$ is the second fundamental form of $\Sigma$ at $y$ in the direction of $p$.
 Thus, its eigenvalues are principal curvatures of $\Sigma$ in the direction of $p$ (cf.\ \cite[Theorem 3.5]{AM}).
 Since $PM^2P=(PMP)^2$, the evolution law \eqref{EESec} implies that $n-k$ of the (outward) principal curvature $\kappa_i^\delta(x)$ at $x\in S_\delta$ equals
\[
	\kappa_i^\delta(x) = \frac{\kappa_i(y,p)}{1-\delta\kappa_i(y,p)}, \quad
	x = y+\delta p, \quad 1 \leq i \leq n-k,
\]
where $\kappa_i(y,p)$ denotes the $i$ the principal curvature of $\Sigma$ in the direction of $p$.
 Here we invoke the fact that the eigenvalue of $-PM(\delta)$ is the (outward) principal curvature of $S_\delta$ so that \eqref{EESec} is reduced to
\[
	\frac{d}{dt} \kappa_i^t = \left(\kappa_i^t\right)^2, \quad
	1 \leq i \leq n-k,
\]
which yields the desired formula of $\kappa_i^\delta$.
 By \eqref{EESec}, $M(t)t$ is bounded in $t\in(0,1)$.
 Thus $\eta=d^2/2$ is $C^2$ since
\[
	\nabla^2 \eta = d \nabla^2 d + \nabla d \otimes \nabla d.
\]
Since $|\nabla d|=1$, this implies that
\[
	(\nabla^2 \eta)p = p \quad\text{with}\quad 
	p = \nabla d.
\]
In other words, $\nabla^2d\nabla d=\nabla d/d$.
 Thus the remaining principal curvature $\kappa_{j}^\delta(x)$ ($n-k+1\leq j\leq n-1$) at $x\in S_\delta$ equals $-1/\delta$.
 In other words, one of inward curvature of $S_\delta$ equals $1/\delta$.
 We now complete that the inward mean curvature $H$ has the form
\begin{align*}
	H &= \frac{k-1}{\delta} - \sum_{i=1}^{n-k} \kappa_i^\delta(x) \\
	&\geq \frac{k-1}{\delta}
	-\sup \left\{ \sum_{i=1}^{n-k}\frac{\kappa_i(y,p)}{1-\delta\kappa_i(y,p)} \biggm| y\in\Sigma,\ p\in T_y\Sigma,\ |p| = 1 \right\}.
\end{align*}
Since the second term in the rightest-hand side is bounded in $\delta$, we conclude that $\inf_{x\in S_\delta}H(x)>0$ for sufficiently small $\delta$.
 See Figure \ref{Fthin}.
\begin{figure}[htb]
\centering
\includegraphics[width=5.2cm]{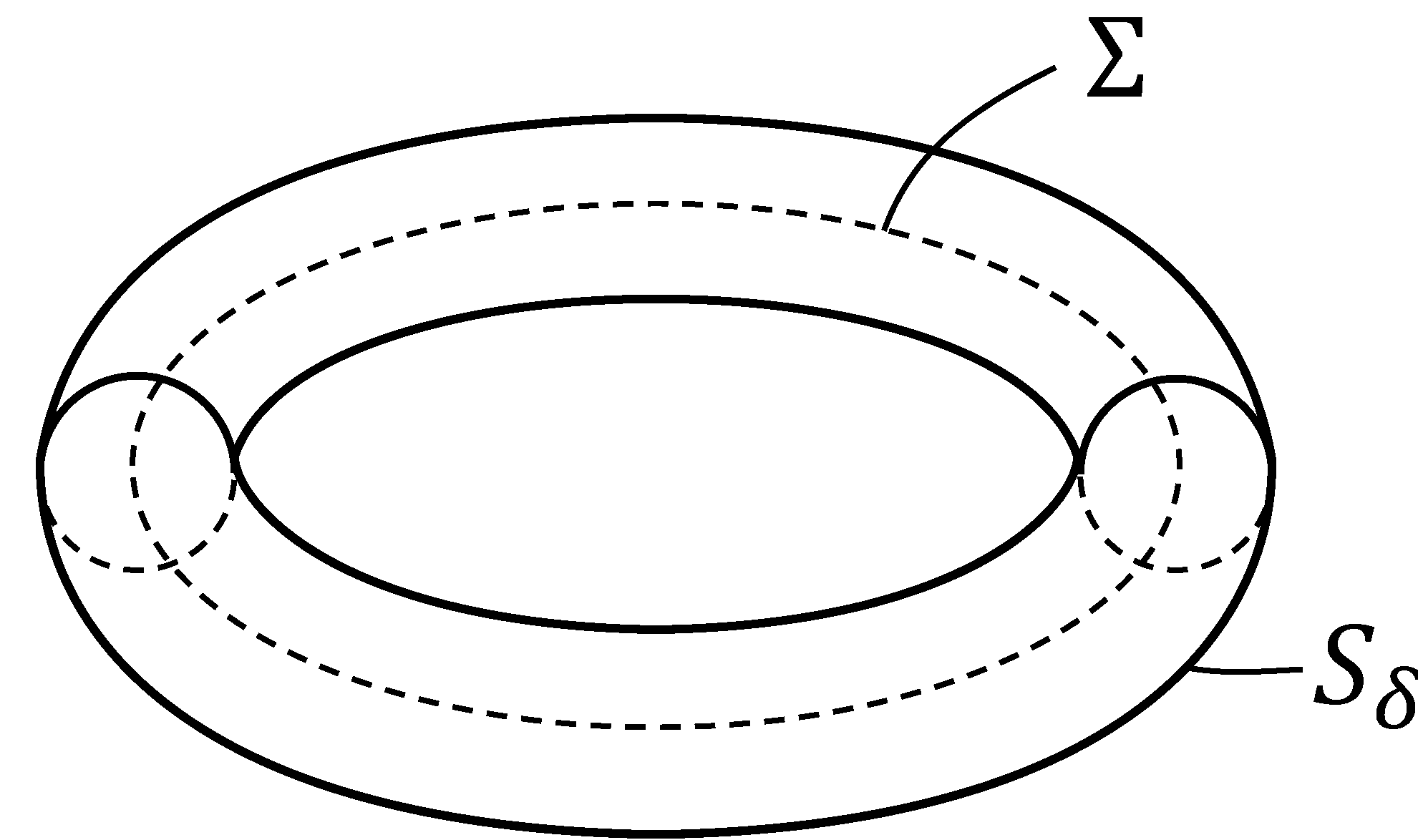}
\caption{$\Sigma$ and $S_\delta$} \label{Fthin} 
\end{figure}

We next prove (ii). 
We first consider the case when $\Sigma$ has no geometric boundary. 
 By direct calculation, for $v(x)=\operatorname{dist}(x,\Sigma)$, we have
\[
	v_t - |\nabla v| \operatorname{div} \left(\frac{\nabla v}{|\nabla v|}\right)
	= - \operatorname{div} \left(\frac{\nabla v}{|\nabla v|}\right)
	= - H_\delta(y),\ y \in S_\delta.
\]
By (i), $H_\delta\geq0$ for $\delta\in(0,\delta_0)$.
Thus $v$ is a viscosity subsolution of the level-set equation in $U_{\delta_0}(\Sigma)$.

If $\Sigma$ has a geometric boundary $b\Sigma$, $b\Sigma$ must have no boundary.
 Thus for small $\delta>0$ $U_\delta(b\Sigma)$ is a bounded $C^2$ mean-convex domain by (i).
 A simple modification of the proof of (i) yields that $S_\delta$ is $C^2$ and its boundary has a positive inward mean curvature for small $\delta>0$ outside the set
\[
	Z = \left\{ x \in \mathbb{R}^n \bigm|
	\operatorname{dist}(x,b\Sigma) \le \operatorname{dist}(x,\Sigma) \right\}.
\] 
Thus $S_\delta$ is $C^2$ outside the boundary of $Z$ and its inward mean curvature is positive.
 It is not difficult to see that $Z$ is $C^2$ and $S_\delta$ is $C^1$ across $\partial Z$ (cf.\ Figure \ref{Fbdry}).
\begin{figure}[htb]
\centering
\includegraphics[width=5.5cm]{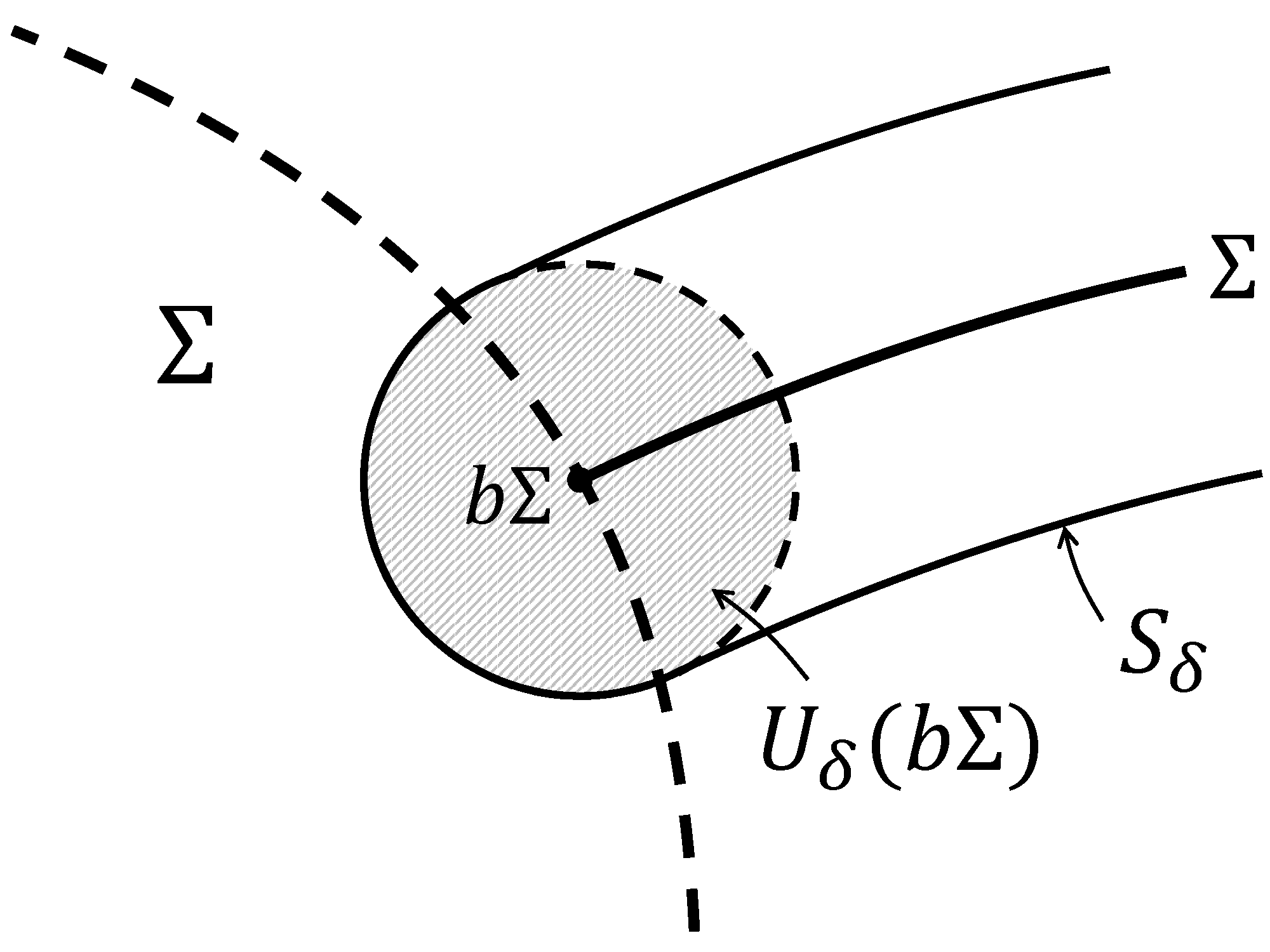}
\caption{near the boundary of $\Sigma$} \label{Fbdry} 
\end{figure}
Thus, $v$ is $C^1$ in $U_{\delta_0}(\Sigma)$ and $C^2$ in $U_{\delta_0}(\Sigma)\backslash\partial Z$.
 Moreover $-\operatorname{div}\left(\nabla v/|\nabla v|\right)\leq0$ outside $Z$.
 Since the second derivatives of $v$ are continuously extended on $Z\cap U_{\delta_0}(\Sigma)$ and $U_{\delta_0}(\Sigma)\backslash Z$, $v$ is a viscosity subsolution of the level-set equation in $U_{\delta_0}(\Sigma)$ at least in viscosity sense.
\end{proof}
\begin{remark} \label{RCuvb}
In \cite[Theorem 4]{Amb}, it is proved that the eigenvalues of $\nabla^2\eta(x)$ equals
\[
	\frac{-\delta\kappa_i(y,p)}{1-\delta\kappa_i(y,p)}
\]
in the direction tangential to $\Sigma$ and $1$ in the direction normal to $\Sigma$, where $\kappa_i(y,p)$ is the principal curvature in the direction of $p$.
 Thus one of principal curvatures in the inward direction of $S_\delta$ must be $1/\delta$, so it is rather clear to see it blows up as $\delta\to0$ as remarked in \cite[Remark 4]{Amb}.
 In \cite{Amb, AM, AS}, it is assumed that $\Sigma$ is smooth through its codimension is not necessarily $2$.
 In our proof, we clarify that $C^2$-regularity of $\Sigma$ is sufficient.
\end{remark}

We next construct viscosity suitable sub- and supersolution for the obstacle problem based on $\Gamma_t^s$.
\begin{lemma} \label{Lsubsuper}
Let $\{\Gamma_t^s\}_{0\leq t\leq T}$ be a mean curvature flow in Theorem {\rm\ref{Tcons}} with $\left.\Gamma_t^s\right|_{t=0}=\Gamma_0$ whose boundary is $\Sigma$.
 For $\delta>0$, set
\[
	w(x,t) = \operatorname{dist}(x,\Gamma_t^s) \wedge \delta, \quad
	t \in [0,T].
\]
Then $w$ is a viscosity supersolution of the level-set equation \eqref{Elev} with upper obstacle $\psi^+(x)=\operatorname{dist}(x,\Sigma)$ in $\mathbb{R}^n\times(0,T)$ for sufficiently small $\delta>0$.
 The function $z=e^{-\lambda t}w$ is a viscosity subsolution of the level-set equation \eqref{Elev} in $\mathbb{R}^n\times(0,T)$ for sufficiently small $\delta>0$ and sufficiently large $\lambda>0$.
\end{lemma}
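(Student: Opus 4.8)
The plan is to verify the two claims separately, relying on the viscosity-solution formalism for obstacle problems from Section~\ref{S2} together with the geometric facts about distance functions established in Lemmas~\ref{Lmcst} and \ref{Lsigma}.

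For the supersolution claim on $w(x,t)=\operatorname{dist}(x,\Gamma_t^s)\wedge\delta$: First I would note that, because $\{\Gamma_t^s\}$ is $C^{2,1}$ away from $\Sigma$ and $\Sigma$ is a standing $C^2$ submanifold of codimension $k\geq2$, the signed distance to $\Gamma_t^s$ is $C^{2,1}$ in a tubular neighbourhood of $\Gamma_t^s\setminus\Sigma$ of some uniform width (uniformity over $t\in[0,T]$ comes from compactness and continuity in Hausdorff distance). In that region the level sets of $w$ are exactly the parallel hypersurfaces to $\Gamma_t^s$, so the computation
\[
	w_t - |\nabla w|\operatorname{div}\!\left(\frac{\nabla w}{|\nabla w|}\right) = V - H_{\text{level}}
\]
evaluated at a point at signed distance $d$ gives, by the standard evolution-of-curvatures identity, $w_t-|\nabla w|\operatorname{div}(\nabla w/|\nabla w|)\geq 0$ provided $\delta$ is small — here one uses that $V=H$ holds on $\Gamma_t^s$ and that the parallel-surface correction only decreases $H$ in absolute value while the time derivative stays controlled; a Gr\"onwall/continuity argument over the compact time interval makes the required smallness of $\delta$ uniform. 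Away from $\Gamma_t^s$, $w\equiv\delta$ is constant, hence trivially a supersolution, and the $\wedge\delta$ truncation is harmless by the invariance Lemma~\ref{Linv} (for orientation-free equations the minimum of supersolutions is a supersolution). The remaining issue is the behaviour near $\Sigma$, where $\operatorname{dist}(x,\Gamma_t^s)$ need not be smooth; but there one observes $w\leq\operatorname{dist}(x,\Sigma)=\psi^+$ with equality on $\Sigma$, and the \emph{obstacle} definition of supersolution only requires the differential inequality at points where $w_*<\psi^+$. At such points one is automatically at distance from $\Sigma$, and a short argument (using that $\Gamma_t^s$ contains $\Sigma$ and is otherwise smooth) shows any test function touching $w$ from below there touches it in the smooth regime, reducing to the computation already done. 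Finally $w(\cdot,0)=\operatorname{dist}(x,\Gamma_0)\wedge\delta\geq 0$ and $w\leq\psi^+$, so the obstacle constraint holds.

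For the subsolution claim on $z=e^{-\lambda t}w$: the point of the exponential factor is to absorb the zeroth-order slack. In the smooth tubular region one computes, for a linear scalar multiple $z=a(t)w$ with $a(t)=e^{-\lambda t}$, that $z$ satisfies $z_t - |\nabla z|\operatorname{div}(\nabla z/|\nabla z|) = a'(t)w + a(t)\bigl(w_t-|\nabla w|\operatorname{div}(\nabla w/|\nabla w|)\bigr)$, and since $w\geq 0$ while $w_t-|\nabla w|\operatorname{div}(\nabla w/|\nabla w|)$ is bounded (again by compactness over $[0,T]$ and smoothness off $\Sigma$, for $\delta$ small), choosing $\lambda$ large forces the right-hand side $\leq 0$; hence $z$ is a subsolution there. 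Note $z$ needs no obstacle: it is a subsolution of the plain level-set equation, and indeed $0\leq z\leq w\leq\psi^+$ is not needed for the subsolution property, only that $z\geq\psi^-\equiv-\infty$ trivially. Near $\Sigma$, where $w=\operatorname{dist}(x,\Gamma_t^s)$ agrees (for $x$ closest to the $\Sigma$-part of $\Gamma_t^s$) with $\operatorname{dist}(x,\Sigma)$, Lemma~\ref{Lsigma}(ii) tells us $\operatorname{dist}(x,\Sigma)$ is already a standing subsolution of the level-set equation, and multiplying by the decreasing function $e^{-\lambda t}$ preserves the subsolution property (invariance Lemma~\ref{Linv}, together with $\frac{d}{dt}e^{-\lambda t}\leq 0$ and $\operatorname{dist}\geq0$); away from both $\Sigma$ and $\Gamma_t^s$, $z=e^{-\lambda t}\delta$ is spatially constant and decreasing in $t$, hence a subsolution. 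One patches these regimes by the usual observation that a continuous function which is a viscosity subsolution on each piece of an open cover and whose pieces are arranged so that test functions localize, is a subsolution globally — more precisely, at a touching point from above one checks the inequality using whichever regime that point falls into, after verifying $w$ is built from minima/distance functions in a way compatible with this (the $C^1$-matching across $\partial Z$ noted in the proof of Lemma~\ref{Lsigma} is exactly what makes this clean).

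The main obstacle is the analysis \emph{near} $\Sigma$: away from it everything is classical parallel-surface geometry plus Gr\"onwall, but at $\Sigma$ the distance function to $\Gamma_t^s$ is only Lipschitz, and one must argue that (a) for the supersolution, the obstacle structure lets one ignore exactly the bad points, and (b) for the subsolution, the bad points are precisely where $w$ coincides with $\operatorname{dist}(x,\Sigma)$, whose subsolution property was handed to us by Lemma~\ref{Lsigma}. Making the interplay between "closest point on $\Gamma_t^s\setminus\Sigma$" and "closest point on $\Sigma$" rigorous — i.e.\ showing the two distance functions genuinely glue into a viscosity sub/supersolution — is the delicate step, and the $k\geq 2$ codimension hypothesis on $\Sigma$ (so that $\Sigma$ is "thin" and the $\operatorname{dist}(\cdot,\Sigma)$ barrier has large curvature) is what rescues it, mirroring the structure already used in Lemma~\ref{Lmcst}.
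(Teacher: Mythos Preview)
Your proposal follows essentially the paper's own argument: decompose into the smooth tubular region around $\Gamma_t^s\setminus\Sigma$ (where the paper derives the explicit identity $d_t^s-\Delta d^s=\sum_i \kappa_i^2 d^s/(1-d^s\kappa_i)$, making the required sign transparent rather than relying on a vague ``parallel-surface correction'' heuristic) and the set $E^\delta$ near $\Sigma$ on which $\operatorname{dist}(\cdot,\Gamma_t^s)=\operatorname{dist}(\cdot,\Sigma)=\psi^+$, so that the obstacle clause disposes of the supersolution test while Lemma~\ref{Lsigma} supplies the subsolution inequality. One minor correction: the factor $e^{-\lambda t}$ is handled by direct computation (the paper's display \eqref{Edbar}), not by the invariance Lemma~\ref{Linv}, which only concerns compositions $\theta\circ u$ in the dependent variable.
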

\begin{proof}
We set
\begin{align*}
	E^\delta &= \left\{ (x,t) \in \mathbb{R}^n \times [0,T] \bigm|
	\operatorname{dist}(x,\Sigma) \leq \operatorname{dist}(x,\Gamma_t^s),\ \operatorname{dist}(x,\Sigma) \leq \delta\right\}, \\
	E^\delta(t) &= \left\{ x \in \mathbb{R}^n \bigm|
	(x,t) \in E^\delta \right\} \quad\text{for}\quad t \in [0,T];
\end{align*}
see Figure \ref{Few}.
\begin{figure}[htb]
\centering
\includegraphics[width=6cm]{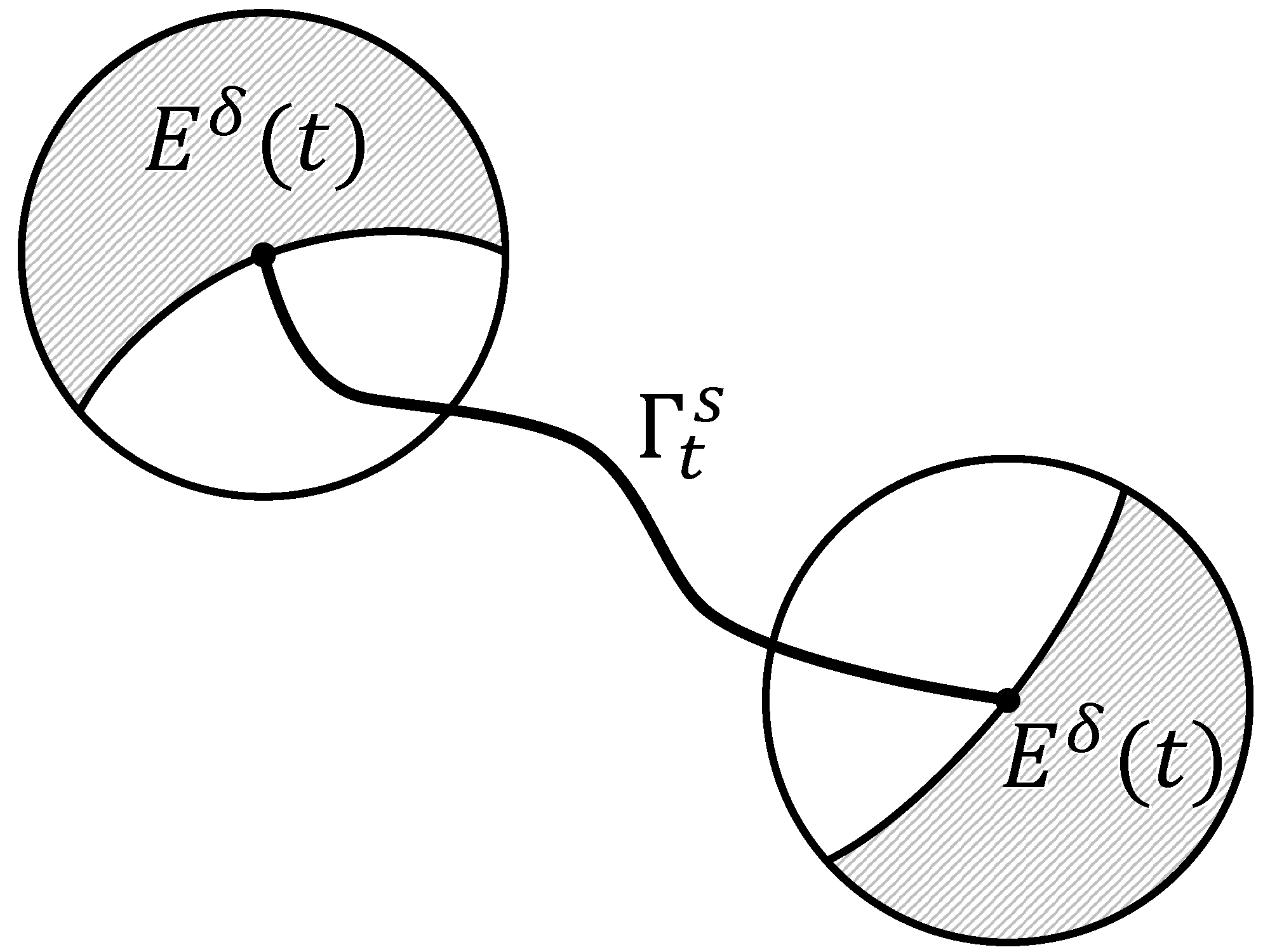}
\caption{sets $E^\delta(t)$ and $\Gamma_t^s$} \label{Few}
\end{figure}
Let $\nu$ be a unit normal vector field of $\Gamma_t^s$ and let $d^s=\operatorname{dist}(x,\Gamma_t^s)$ in the direction of $\nu$ and $d^s=-\operatorname{dist}(x,\Gamma_t^s)$ in the direction of $-\nu$.
 This can be defined in $U_\delta(\Gamma_t^s)\backslash E^\delta(t)$.
 By definition,
\[
	V = -d_t^s \quad\text{on}\quad \Gamma_t^s,
\]
where $V$ is the normal velocity in the direction of $\nu=\nabla d^s$.
 Let $\kappa_i$ be the principal curvatures in the direction of $\nu$ so that $H=\sum_{i=1}^n\kappa_i=-\operatorname{div}\nu$ on $\Gamma_t^s$.
 We consider
\[
	\Gamma_t^{s,\varepsilon} = \left\{ x \in \mathbb{R}^n \bigm|
	x = y + \varepsilon \nu(y),\ y \in \Gamma_t^s \right\}
\]
and observe as in the proof of Lemma \ref{Lsigma} that the principal curvature $\kappa^\varepsilon_i$ of $\Gamma_t^{s,\varepsilon}$ equals
\[
\kappa^\varepsilon_i(x) = \frac{\kappa_i(y)}{1-\varepsilon\kappa_i(y)}
\]
for small $\varepsilon>0$.
 Thus the mean curvature $H^\varepsilon$ of $\Gamma_t^{s,\varepsilon}$ equals
\[
	\sum_{i=1}^{n-1} \frac{\kappa_i(y)}{1-\varepsilon\kappa_i(y)}
\]
which equals $-\operatorname{div}\nabla d^s=-\Delta d^s$ on $\Gamma_t^{s,\varepsilon}$.
 Since the normal velocity of $\Gamma_t^{s,\varepsilon}$ equals $-d_t^s(y,t)$ even on $\Gamma_t^{\delta,\varepsilon}$, we conclude that $d^s$ satisfies
\[
	d_t^s - \Delta d^s = -\sum_{i=1}^{n-1} \left( \kappa_i(y) -  \frac{\kappa_i(y)}{1-d^s\kappa_i(y)} \right)
	= \sum_{i=1}^{n-1} \frac{\kappa_i^2 d^s}{1-d^s\kappa_i(y)} \geq 0
\]
in
\[
	W_+^\delta = \left\{ (x,t) \in E_{2\delta}^c \bigm|
	0 \leq d^s(x,t) < 2\delta,\ 0 < t < T \right\}
\]
for small $\delta>0$.
 Since $|\nabla d^s|=1$, we conclude that
\[
	d_t^s - |\nabla d^s| \operatorname{div}\left(\nabla d^s/|\nabla d^s|\right)
	= d_t^s - \Delta d^s \geq 0
\]
and this inequality still holds for $\varepsilon$ negative so that it is valid in
\[
	W^\delta = \left\{ (x,t) \in E_{2\delta}^c \bigm|
	\left| d^s(x,t) \right| < 2\delta,\ 0 \leq t < T \right\}.
\]

Since this is an orientation free motion, this implies that $|d^s|$ is a viscosity supersolution of the level-set equation in $W^\delta$ with no obstacle .
 Since $|d^s|=\psi^+$ on $E_{2\delta}$, this implies that $|d^s|$ is a viscosity supersolution to the level-set equation with upper obstacle $\psi^+$ in 
\[
	U^\delta = \left\{ (x,t) \in \mathbb{R}^n \times (0,T) \bigm|
	|d^s| < 2\delta \right\}.
\]
By the invariance Lemma \ref{Linv}, we conclude that $w=|d^s|\wedge\delta$ is a viscosity supersolution of the level-set equation in $\mathbb{R}^n\times(0,T)$ with obstacle $\psi^+$.

For $\bar{d}=e^{-\lambda t}d^s$, we agree in the same way to obtain that
\begin{align}
\begin{aligned} \label{Edbar}
	\bar{d}_t - |\nabla\bar{d}| \operatorname{div} \left(\nabla\bar{d}/|\nabla\bar{d}|\right)
	&= e^{-\lambda t} (d_t^s - \Delta d^s) - \lambda e^{-\lambda t}d^s \\
	&= e^{-\lambda t}d^s \left( \sum_{i=1}^{n-1} \frac{\kappa_i^2}{1-d^s\kappa_i} - \lambda \right)
	\leq 0
\end{aligned}
\end{align}
in $W_+^\delta$.
 We take $\delta$ small so that $2\delta<\delta_0$ where $\delta_0$ is given in Lemma \ref{Lsigma}, which implies that
\[
	\bar{d}_t
	- |\nabla \bar{d}| \operatorname{div} \left(\nabla\bar{d}/|\nabla\bar{d}|\right) \leq 0
\]
in $E_{2\delta}$.
 We further take $\delta$ small so that $1-2\delta\kappa_i$ is bounded from below with some positive constant in $\Gamma_t^s$.
 We then take $\lambda$ sufficiently large so that the right-hand side of \eqref{Edbar} is negative in $W_+^\delta$.
 By this choice, we now conclude that $e^{-\lambda t}|d^s|$ is a viscosity subsolution of the level-set equation in $U^\delta$ with upper obstacle $\psi^+=\operatorname{dist}(x,\Sigma)$.
 Thus $z$ is a viscosity subsolution of the level-set equation in $\mathbb{R}^n\times(0,T)$ with upper obstacle $\psi^+$.
\end{proof}
\begin{proof}[Proof of Theorem {\rm\ref{Tcons}}]
Let $u$ be the viscosity solution of the level-set equation with upper obstacle $\psi^+=\operatorname{dist}(x,\Sigma)$ and initial data
\[
	u_0(x) = \operatorname{dist}(x,\Gamma_0) \wedge \delta.
\]
Let $z$ and $w$ be as in Lemma \ref{Lsubsuper}.
 By definition
\[
	z \leq u_0 \leq w \quad\text{at}\quad t = 0.
\]
Since $z$ and $w$ are respectively a viscosity sub- and supersolution, by Lemma \ref{Lcompa} in a big ball, we conclude that $z\leq u\leq w$ in $\mathbb{R}^n\times(0,T)$.
 This implies that $\Gamma_t^s=\Gamma_t$.
\end{proof}
%
%

\section{A few perspectives} \label{S5} 

\subsection{Stability} \label{SS51} 

By a standard argument by using half-relaxed limits, 
it is not difficult to prove that if the initial data $u_{0m}$ converges to $u_0\in BUC(\mathbb{R}^n)$ uniformly and the obstacle $\psi_m^+$ converges to $\psi^+$ uniformly, then the viscosity solution $u_m$ converges to $u$ in $\mathbb{R}^n\times[0,T)$ locally uniformly, where $u$ is the viscosity solution to the limit problem. 
 However, this only yields that for any $\delta>0$, there is $m_0$ such that if $m\geq m_0$, then $\Gamma_t^m\subset U_\delta(\Gamma_t)$, where $\Gamma_t^m=\left\{x\in\mathbb{R}^n\bigm|u_m(x,t)=0\right\}$.
 The inclusion $\Gamma_t\subset U_\delta(\Gamma_t^m)$ does not hold in general even without obstacles and without fattening phenomena. 
 This is because our $\Gamma_t$ may not separate two non-empty open sets. 
 Here is a simple example.
If $\Gamma_t$ separates two non-empty open sets and no fattening occurs, the convergence holds as in \cite[Section 4.6]{G}.
\begin{exam}
For $x\in\mathbb{R}^2$, we set the initial data $u_0(x)=\left|1-|x|\right|$ so that $\Gamma_0$ is a unit circle 
in $\mathbb{R}^2$. For $m\in\mathbb{N}$, let
\[
	u_{0m}(x) = \left( \frac1m - |x-P| \right)_+
	\vee u_0(x)
\]
and $P=(1,0)$, where $a_+$ denotes the plus part of $a$, i.e., $a_+=a\vee 0$. 
 By definition,
\[
	\Gamma_0^m = \Gamma_0 \backslash \left\{x\in\mathbb{R}^2 \biggm| |x-P| \ge \frac1m \right\}; 
\]
see Figure \ref{Fbcir}. 
\begin{figure}[htb]
\centering
\includegraphics[width=6cm]{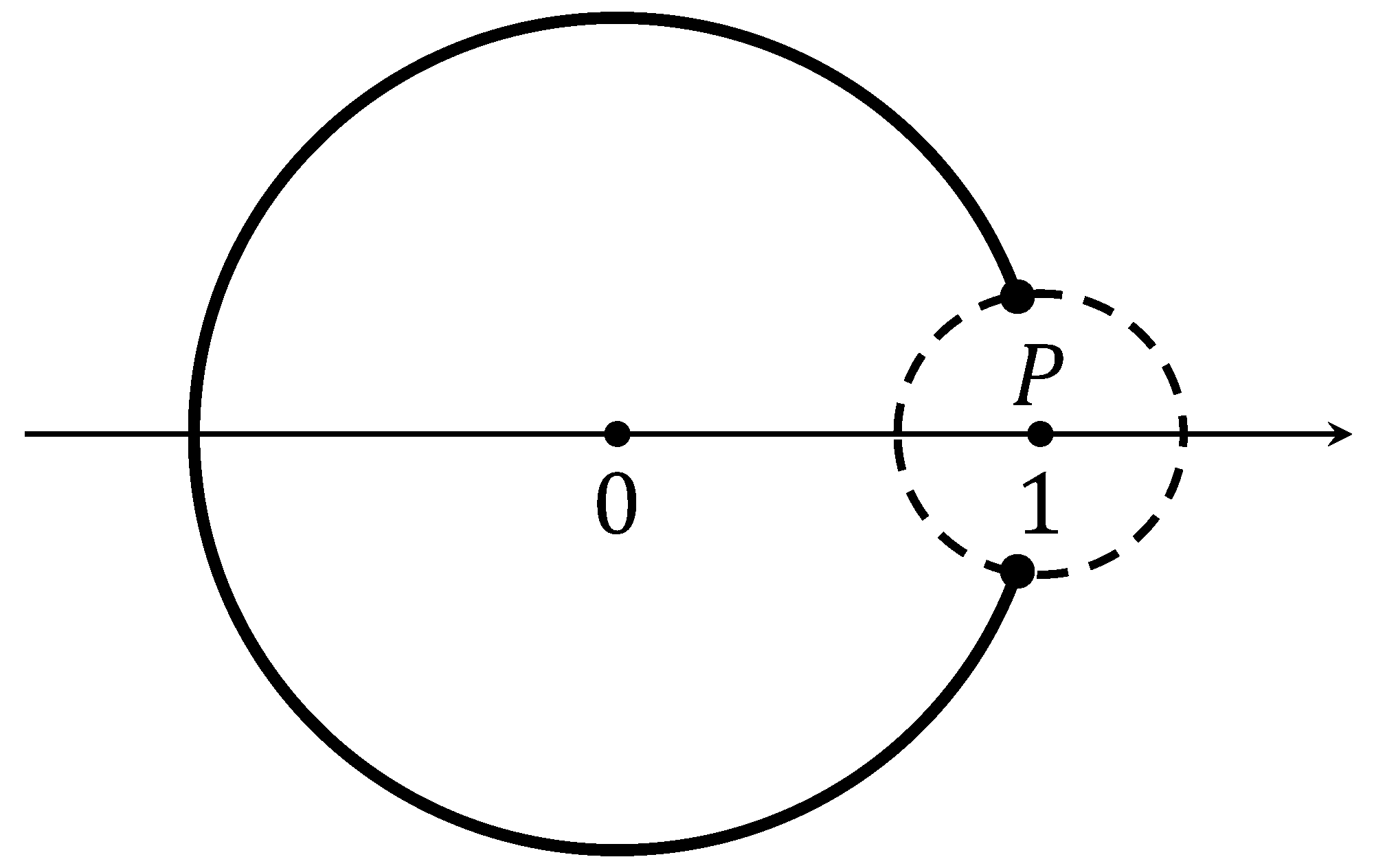}
\caption{figure of $\Gamma_0^m$ } \label{Fbcir}
\end{figure}
 The solution $\Gamma_t^m$ instantaneously disappears because $\Gamma_0^m$ is not closed.
 This can be proved as in \cite[Section 4.7]{G}.
 This is instant extinction.
 Evidently, the evolution
\[
	\Gamma_t = \left\{x\in\mathbb{R}^2 \bigm| |x| = \sqrt{1-2t} \right\}.
\]
cannot be approximated by $\Gamma_t^m$.
\end{exam}

If we approximate $\Sigma$ by $\Sigma_\delta=\overline{U_\delta(\Sigma)}$, the natural question is whether the level-set flow $\Gamma_t^\delta$ with obstacle $\Gamma_\delta$ approximates the level-set flow $\Gamma_t$ with obstacle $\Sigma$ as $\delta\downarrow0$.
 Because of the previous example, it is an interesting question to consider what kind of sequence of initial data $\Gamma_0^\delta\to\Gamma_0$ yields $\Gamma_t^\delta\to\Gamma_t$ in the sense of Hausdorff distance uniformly in $[0,T]$.
 Of course, one may consider other boundary conditions like right-angle condition.
 We may ask a similar convergence problem.

\subsection{Non orientation-free flow} \label{SS52} 

If we consider $V=H+\mathrm{const}$, the flow depends on the orientation.
 We cannot take nonnegative function to represent the level-set flow.
 Such an evolution with driving force is important, for example, in analysis of spirals caused by screw dislocations in a crystal surface \cite{OGT}.
 The approach by \cite{GNO} for spiral is promising since it considers the problem in a covering space although $\Sigma$ is a set of disjoint disks with right-angle boundary condition.
 
\subsection{Higher codimensional mean curvature flow} \label{SS53} 
In \cite{AS}, a level-set method for a motion of higher codimensional manifold in $\mathbb{R}^n$ by its mean curvature vector was established.
 The corresponding problem with a prescribed boundary is of the form
\begin{equation} \label{EMCDH}
    \left\{
    \begin{alignedat}{2} 
	\vec{V} &= \vec{H} \quad\text{on}\quad \Gamma_t, \quad t > 0, \\
	b\Gamma_t &= \Sigma, \quad t > 0, \\
		\left.\Gamma_t \right|_{t=0} &= \Gamma_0.
    \end{alignedat}
    \right.
\end{equation}
Here $\Sigma$ is a codimension $k+1$ submanifold and $\Gamma_t$ is a codimension $k$ submanifold in $\mathbb{R}^n$, where $k\geq2$;
 in the case $k=1$, the problem is nothing but \eqref{EMCD}.
 The motion is determined by the velocity vector $\vec{V}$ and the mean curvature vector $\vec{H}$.
 In \cite{AS}, they derived the level-set equation for a nonnegative function so that its zero level set evolves by $\vec{V}=\vec{H}$.
 We remark here that their theory easily extends to \eqref{EMCDH} by interpreting $\Sigma$ as an obstacle as in this paper.
 The consistency with smooth flow can be extended to this case by constructing viscosity sub- and supersolutions.
 One has to use $\eta=d^2/2$ instead of signed distance $d^s$.
 However, it is not clear what is the situation corresponding to the case studied by \cite{SZ}.
 We do not pursue this topic further in this paper. 



\begin{thebibliography}{AM}
%
\bibitem[Amb]{Amb}
L.~Ambrosio, 
Geometric evolution problems, distance function and viscosity solutions. 
\emph{Calculus of variations and partial differential equations (Pisa, 1996),} 5--93, 
\emph{Springer, Berlin,} 2000.
%
\bibitem[AM]{AM}
L.~Ambrosio and C.~Mantegazza, 
Curvature and distance function from a manifold. Dedicated to the memory of Fred Almgren. 
\emph{J.\ Geom.\ Anal.}\ 8 (1998), no.~5, 723--748.
%
\bibitem[AS]{AS}
L.~Ambrosio and H.~M.~Soner, 
Level set approach to mean curvature flow in arbitrary codimension. 
\emph{J.\ Differential Geom.}\ 43 (1996), no.~4, 693--737.
%
\bibitem[An]{An}
S.~B.~Angenent, 
Shrinking doughnuts. 
In: Nonlinear diffusion equations and their equilibrium states, eds., 
N.~G.~Lloyd, W.~M.~ Ni, L.~A.~ Peletier and J.~Serrin 3, 
\emph{Birkh\"auser, Basel-Boston-Berlin,} 1992. pp.\ 21--38.
%
\bibitem[BG]{BG}
G.~Barles and C.~Georgelin, 
A simple proof of convergence for an approximation scheme for computing motions by mean curvature. 
\emph{SIAM J.\ Numer.\ Anal.}\ 32 (1995), 484--500.
%
\bibitem[CGG]{CGG}
Y.-G.~Chen, Y.~Giga and S.~Goto, 
Uniqueness and existence of viscosity solutions of generalized mean curvature flow equations. 
\emph{J.\ Differential Geom.}\ 33 (1991), 749--786.
%
\bibitem[CIL]{CIL}
M.~G.~Crandall, H.~Ishii and P.-L.~Lions, 
User's guide to viscosity solutions of second order partial differential equations.
\emph{Bull.\ Amer.\ Math.\ Soc.}\ (N.S.) 27 (1992), 1--67.
%
\bibitem[ES]{ES}
L.~C.~Evans and J.~Spruck, 
Motion of level sets by mean curvature.\ I. 
\emph{J.\ Differential Geom.}\ 33 (1991), 635--681.
%
\bibitem[Fo]{Fo}
N.~Forcadel, 
Dislocation dynamics with a mean curvature term: short time existence and uniqueness.
\emph{Differential Integral Equations} 21 (2008), 285--304.
%
\bibitem[G]{G}
Y.~Giga, 
Surface Evolution Equations. A Level Set Approach. 
Monographs in Mathematics, 99. 
\emph{Birkh\"auser, Basel-Boston-Berlin,} 2006. x\hspace{-0.1em}i\hspace{-0.1em}i+264 pp.
%
\bibitem[GG1]{GG1} 
Y.~Giga and S.~Goto, 
Geometric evolution of phase-boundaries. 
\emph{On the evolution of phase boundaries (Minneapolis, MN, 1990--91),} 51--65, IMA Vol.\ Math.\ Appl., 43, 
\emph{Springer, New York,} 1992.
%
\bibitem[GG]{GG}
Y.~Giga and S.~Goto, 
Motion of hypersurfaces and geometric equations. 
\emph{J.\ Math.\ Soc.\ Japan} 44 (1992), 99--111.
%
\bibitem[GGIS]{GGIS}
Y.~Giga and S.~Goto, H.~Ishii and M.-H.~Sato, 
Comparison principle and convexity preserving properties for singular degenerate parabolic equations on unbounded domains.
\emph{Indiana Univ.\ Math.\ Journal} 40 (1991), 443--470.
%
\bibitem[GP]{GP}
Y.~Giga and N.~Po\v{z}\'ar, 
Viscosity solutions for the crystalline mean curvature flow with a nonuniform driving force term. 
\emph{SN Partial Differ.\ Equ.\ Appl.}\ 1 (2020), Article number: 39.
%
\bibitem[GTZ]{GTZ}
Y.~Giga, H.~V.~Tran and L.~Zhang, 
On obstacle problem for mean curvature flow with driving force. \emph{Geom.\ Flows} 4 (2019), 9--29.
%
\bibitem[GT]{GT}
D.~Gilbarg, N.~S.~Trudinger, 
Elliptic partial differential equations of second order. Reprint of the 1998 edition. Classics in Mathematics. 
\emph{Springer-Verlag, Berlin,} 2001. x\hspace{-0.1em}i\hspace{-0.1em}v+517 pp.
%
\bibitem[GNO]{GNO}
S.~Goto, M.~ Nakagawa and T.~Ohtsuka, 
Uniqueness and existence of generalized motion for spiral crystal growth. 
\emph{Indiana Univ.\ Math.\ J.}\ 57 (2008), no.\ 5, 2571--2599.
%
\bibitem[Gr87]{Gr87}
M.~A.~Grayson, 
The heat equation shrinks embedded plane curves to round points.
\emph{J.\ Differential Geom.}\ 26 (1987), no.\ 2, 285--314.
%
\bibitem[Gr89]{Gr89}
M.~A.~Grayson, 
A short note on the evolution of a surface by its mean curvature.
\emph{Duke Math.\ J.}\ 58 (1989), no.\ 3, 555--558.
%
\bibitem[KP]{KP}
S.~G.~Krantz and H.~R.~Parks, 
The implicit function theorem. History, theory, and applications. Reprint of the 2003 edition. 
\emph{Modern Birkh\"auser Classics. Birkh\"auser/Springer, New York,} 2013. x\hspace{-0.1em}i\hspace{-0.1em}v+163 pp.
%
\bibitem[M]{M}
G.~Mercier, 
Mean curvature flow with obstacles: a viscosity approach. 
\url{https://arxiv.org/abs/1409.7657}
%
\bibitem[OGT]{OGT}
T.~Ohtsuka, Y.-H.~R.~Tsai and Y.~Giga, 
A level set approach reflecting sheet structure with single auxiliary function for evolving spirals on crystal surfaces. 
\emph{J.\ Sci.\ Comput.}\ 62 (2015), no.\ 3, 831--874.
%
\bibitem[SZ]{SZ}
P.~Sternberg and W.~P.~Ziemer, 
Generalized motion by curvature with a Dirichlet condition. 
\emph{J.\ Differential Equations} 114 (1994), 580--600.
%
\bibitem[Ya]{Ya}
N.~Yamada, 
Viscosity solutions for a system of elliptic inequalities with bilateral obstacles. 
\emph{Funkcial.\ Ekvac.}\ 30 (1987), 417--425. 
%
\end{thebibliography}
\end{document}